\newtheorem{theorem}{Theorem}[section]
\newtheorem{proposition}[theorem]{Proposition}
\newtheorem{corollary}[theorem]{Corollary}
\newtheorem{lemma}[theorem]{Lemma}
\newtheorem{algorithm}[theorem]{Algorithm}
\newtheorem{preremark}[theorem]{Remark}
\newtheorem{predefinition}[theorem]{Definition}
\newtheorem{preexample}[theorem]{Example}
\newtheorem{prenotation}[theorem]{Notation}
\newtheorem{preconjecture}[theorem]{Conjecture}
\newenvironment{remark}{\begin{preremark}\rm}{\end{preremark}}
\newenvironment{definition}{\begin{predefinition}\rm}
{\end{predefinition}}
\newenvironment{example}{\begin{preexample}\rm}{\end{preexample}}
\newenvironment{notation}{\begin{prenotation}\rm}{\end{prenotation}}
\def\NF{\mathop{\rm NF}\nolimits}
\def\LF{\mathop{\rm LF}\nolimits}
\def\DF{\mathop{\rm DF}\nolimits}
\def\HF{\mathop{\rm HF}\nolimits}
\def\ri{\mathop{\rm ri}\nolimits}
\def\Mat{\mathop{\rm Mat}\nolimits}
\def\Hom{\mathop{\rm Hom}\nolimits}
\def\rank{\mathop{\rm rank}\nolimits}
\def\Supp{\mathop{\rm Supp}\nolimits}
\def\Spec{\mathop{\rm Spec}\nolimits}
\renewcommand{\hom}{{\mathop{\rm hom}}}
\def\Soc{\mathop{\rm Soc}\nolimits}
\def\Ann{\mathop{\rm Ann}\nolimits}
\def\Rad{\mathop{\rm Rad}\nolimits}
\def\Ker{\mathop{\rm Ker}\nolimits}
\def\id{\mathop{\rm id}\nolimits}
\def\Col{\mathop{\rm Col}\nolimits}
\def\gr{\mathop{\rm gr}\nolimits}
\def\ord{\mathop{\rm ord}\nolimits}
\def\sepdeg{\mathop{\rm sepdeg}\nolimits}
\def\m{{\mathfrak{m}}}
\def\M{\mathfrak{M}}
\def\q{{\mathfrak{q}}}
\def\Q{{\mathfrak{Q}}}
\def\Fbar{{\mathcal{F}}}
\def\grFR{{\gr_{\mathcal{F}}(R)}}
\DeclareSymbolFont{newfont}{OML}{cmm}{m}{it}
\DeclareMathSymbol{\Varrho}{3}{newfont}{37}
\def\rho{{\mathop{\Varrho}\,}}
\let\epsilon=\varepsilon
\def\phi{{\varphi}}
\let\Psi=\varPsi
\let\Phi=\varPhi
\let\theta=\vartheta
\def\tr{^{\,\rm tr}}
\def\tfrac #1#2{{\textstyle\frac{#1}{#2}}}
\def\cocoa{\mbox{\rm
  C\kern-.13em o\kern-.07 em C\kern-.13em o\kern-.15em A}}
\def\apcocoa{\mbox{\rm
A\kern-0.13em p\kern -0.07em C\kern-.13em o\kern-.07 em C\kern-.13em
o\kern-.15em A}}
\begin{document}

\title{On the Cayley-Bacharach Property}

\author{Martin Kreuzer}
\address{Fakult\"at f\"ur Informatik und Mathematik, Universit\"at
Passau,
D-94030 Passau, Germany}
\email{Martin.Kreuzer@uni-passau.de}

\author{Le Ngoc Long}
\address{Fakult\"at f\"ur Informatik und Mathematik, Universit\"at
Passau,
D-94030 Passau, Germany}
\email{nglong16633@gmail.com}

\author{Lorenzo Robbiano}
\address{Dipartimento di Matematica, Universit\`a di Genova,
Via Dodecaneso 35,
I-16146 Genova, Italy}
\email{lorobbiano@gmail.com}

\date{\today}
\keywords{Cayley-Bacharach property, affine Hilbert function,
Gorenstein ring, separator, canonical module, complete intersection}

\begin{abstract}
The Cayley-Bacharach property, which has been classically stated
as a property of a finite set of points in an affine or projective
space, is extended to arbitrary 0-dimensional affine algebras
over arbitrary base fields. We present characterizations and
explicit algorithms for checking the Cayley-Bacharach property
directly, via the canonical module, and in combination with the
property of being a locally Gorenstein ring. Moreover, we 
characterize strict Gorenstein rings by the Cayley-Bacharach property
and the symmetry of their affine Hilbert function, as well as by the strict
Cayley-Bacharach property and the last difference of their affine
Hilbert function.
\end{abstract}

\subjclass[2010]{Primary 13H10 , Secondary 13P99, 14M05, 14Q99}

\maketitle

%
%

\section{Introduction}

\begin{flushright}
{\it History will, of course, go on repeating itself,}\\
{\it and the historians repeating each other.}
\end{flushright}

The Cayley-Bacharach Property (CBP) has a long and rich 
history. Classically, it has been formulated geometrically 
as follows: {\it A set of points~$\mathbb{X}$ in 
$n$-dimensional affine or projective space is said to have 
the Cayley-Bacharach
property of degree~$d$ if any hypersurface of degree~$d$
which contains all points of~$\mathbb{X}$ but one
automatically contains the last point.}
When~$d$ is one less than the regularity
index of the coordinate ring of~$\mathbb{X}$, we simply say 
that~$\mathbb{X}$ has the Cayley-Bacharach property (CBP).
Through the ages, the CBP has been shown for various, increasingly
general cases.
\begin{enumerate}
\item[(ca.\,320)\!] The classical theorem of Pappos (Pappus Alexandrinus) 
can be interpreted as a consequence of the fact that a set of 9 points in the plane 
which is the complete intersection of two curves consisting of three lines each,
has the~CBP (see~\cite{Pap}, Book 7, Prop.\ 139).

\item[(1640)] Pascal's theorem may be seen as a corollary of the fact that a set 
of 9 points in the plane, formed by intersecting a conic and a line 
with a set of three lines, has the~CBP (see~\cite{Pas}).

\item[(1748)] After being questioned by G.~Cramer about an apparent
{\it paradox} in the theory of plane curves, L.~Euler explained 
in~\cite{Eul} a solution which may be interpreted as claiming
that a complete intersection of two plane cubic curves consisting of 9 points
has the~CBP. 

\item[(1835)] From remarks of C.G.~Jacobi in~1835 (cf.~\cite{Jac2}, p.~331) 
and M.~Chasles in 1837 (cf.~\cite{Cha}, p.~150), it is clear that
by that time it was considered ``generally known'' that 9 points
in the plane which are the complete intersection of two curves of degree~3
have the~CBP.

\item[(1836)] In fact, based on his famous formula from~\cite{Jac1},
C.G.~Jacobi proved in~\cite{Jac2} an algebraic version of the~CBP
for a set of~$mn$ points in the plane which is a complete intersection
of a curve of degree~$m$ and a curve of degree~$n$.

\item[(1843)] In his paper~\cite{Cay1}, A.~Cayley stated a much stronger
property than the~CBP for such complete intersections which is, unfortunately,
incorrect in general. In fact, even for proving the~CBP, his argument contains 
a gap.

\item[(1885)] The first explicit statement and a correct proof of the~CBP
for reduced complete intersections in the plane were given by I.~Bacharach in~\cite{Bac}.
The proof was based on M.~Noethers ``Fundamentalsatz'' which is also known as
his $A\Phi+B\Psi$ Theorem. And even though A.\ Cayley failed to grasp the error
in his proof (see~\cite{Cay2}), the name ``Cayley-Bacharach Theorem''
became the commonly accepted one.

\item[(1952)] In the 1950s, starting with the work of D.~Gorenstein (see~\cite{Gor}),
it became clear that the~CBP is not restricted to
complete intersections, but it holds, for instance, for a set of points
in a projective space whose homogeneous coordinate ring is a Gorenstein ring.

\item[(1985)] A further significant step was taken in~\cite{DGO}
by E.~Davis, A.V.~Geramita and F.~ Orecchia, 
where the~CBP is extended to sets of points in~$\mathbb{P}^n$
whose coordinate rings are level algebras and
where arithmetically Gorenstein sets of points are characterized by the~CBP
and the symmetry of their Hilbert functions.

\item[(1993)] Some years later, in~\cite{GKR}, A.V.~Geramita together with the first
and third authors of this paper, showed that the~CBP of a set of points in~$\mathbb{P}^n$
is tied intrinsically to the structure of the canonical module of its homogeneous
coordinate ring.

\item[(1992)] The results of~\cite{DGO} and~\cite{GKR} were generalized by the
first author to arbitrary 0-dimensional subschemes of projective spaces over
an algebraically closed field (see~\cite{Kre1} and~\cite{Kre2}).

\item[(1996)] In~\cite{EGH}, D.\ Eisenbud, M.\ Green and J.\ Harris reviewed the
history of the Cayley-Bacharach theorem, put it in a general algebraic frame,
and proposed striking (and hitherto unproven) conjectures of vast
extensions. 

\item[(2016)] Thus it became clear that, in order to study even more general
versions of the~CBP, it is preferable to formulate it as a property
of the respective coordinate rings rather than sets of points or 0-dimensional
schemes. In this vein, the first and third authors defined in~\cite{KR3} the~CBP
for 0-dimensional affine algebras with a fixed presentation which have linear
maximal ideals, and they provided several algorithms to check it.

\item[(2015)] The most general definition of the~CBP to date
was given by the second author in~\cite{Lon} where he considered it
for presentations of arbitrary 0-dimensional affine algebras over 
arbitrary base fields.
\end{enumerate}

\smallskip
The definition in~\cite{Lon} is the starting point of this paper.
Our goal is to study this very general version of the~CBP and to
find efficient algorithms for checking it. A special emphasis will be
given to algorithms which will allow us to apply them to families
of 0-dimensional ideals parametrized by border basis schemes in a
follow-up paper (see~\cite{KLR}). Moreover, we generalize the main results about
the~CBP in~\cite{DGO}, \cite{GKR}, \cite{Kre1} and~\cite{Kre2}
to the most general setting of a 0-dimensional affine algebra over
an arbitrary base field.

To achieve these goals, we proceed as follows. Our main object of study
is a 0-dimensional affine algebra $R=P/I$ over an arbitrary field~$K$,
where we let $P=K[x_1,\dots,x_n]$ be a polynomial ring over~$K$ and~$I$ a
0-dimensional ideal in~$P$. Even if we do not specify it explicitly 
everywhere, we always consider~$R$ together with this fixed presentation.
In other words, we consider a fixed 0-dimensional subscheme 
$\mathbb{X}=\Spec(P/I)$ of~$\mathbb{A}^n$.

This corresponds to the classical setup. However, in the last decades
it has been customary to consider 0-dimensional subschemes of projective
spaces. Of course, via the standard embedding $\mathbb{A}^n \cong
D_+(x_0) \subset \mathbb{P}^n$, the classical setup can be
translated to this setting in a straightforward way. For instance, 
in this case the
affine coordinate ring~$R=K[x_1,\dots,x_n]/I$ has to substituted by 
the homogeneous coordinate ring $R^\hom =K[x_0,\dots,x_n]/I^\hom$, etc.
In this paper we use the affine setting for several reasons:
firstly, the ideals defining subschemes of~$\mathbb{X}$ can be
studied using the decomposition into local rings, 
secondly, the structure of the coordinate ring of~$\mathbb{X}$ and its canonical
module can be described via multiplication matrices,
and thirdly, the affine setup is suitable for generalizing
everything to families of 0-dimensional ideals via the
border basis scheme as in the upcoming paper~\cite{KLR}.

In Section~\ref{sec2} we start by recalling some basic properties
of~$I$ and~$R=P/I$. In particular, we recall the primary decomposition
$I=\Q_1\cap\cdots\cap \Q_s$ of~$I$, the corresponding primary
decomposition $\langle 0\rangle = \q_1\cap\cdots\cap \q_s$ of the
zero ideal of~$R$, and the decomposition $R=R/\q_1\times \cdots\times
R/\q_s$ of~$R$ into local rings. Then, for $i\in\{1,\dots,s\}$, a
minimal $\Q_i$-divisor $J$ of~$I$ is defined in such a way that
the corresponding subscheme of~$\mathbb{X}$ differs from~$\mathbb{X}$
only at the point $p_i=\mathcal{Z}(\M_i)$ and has the minimal possible
colength $\ell_i=\dim_K(P/\M_i)$, where $\M_i=\Rad(\Q_i)$.
For sets of points, these subschemes are precisely the sets $\mathbb{X}
\setminus \{p_i\}$ appearing in the classical formulation of the
Cayley-Bacharach Theorem.

Moreover, in order to have a suitable version of degrees, we recall
the degree filtration of~$R$, its affine Hilbert function $\HF^a_R$,
and its regularity index $\ri(R)$. As explained for instance in~\cite{KR2},
Section~5.6, the affine Hilbert function plays the role of the
usual Hilbert function if we consider affine algebras such as~$R$.

These constructions are combined in Section~\ref{sec3}.
We recall the definition and some characterizations
of separators from~\cite{KR3}. Then we show that a separator 
for a maximal ideal~$\m_i$ of~$R$ corresponds to a generator
of a minimal $\Q_i$-divisor~$J$ of~$I$, and we use the maximal
order of such a separator to describe the regularity index of~$J/I$.
Then the minimum of all regularity indices $\ri(J/I)$
is called the separator degree of~$\m_i$. 
We go on to show that this ``minimum of all maxima'' definition 
is the correct, but rather subtle generalization of the classical 
notion of the least degree
of a hypersurface containing all points of~$\mathbb{X}$ but~$p_i$.

The separator degree of a maximal ideal~$\m_i$ of~$R$ is bounded by
the regularity index $\ri(R)$, since the order of any separator
is bounded by this number. If all separator degrees attain
this maximum value, we say that~$R$ has the Cayley-Bacharach property (CBP),
or that $\mathbb{X}$ is a Cayley-Bacharach scheme.
In the last part of Section~3 we construct our first new algorithm
which allows us to check whether a given maximal ideal~$\m_i$
of~$R$ has maximal separator degree (see Proposition~\ref{CharCBgeneral} and 
Algorithm~\ref{alg:CheckCB}).

Although this algorithm can be used to check the~CBP of~$R$,
we construct a better one in Section~\ref{sec4}. 
It is based on the canonical module $\omega_R=\Hom_K(R,K)$
of~$R$. The module structure of~$\omega_R$ is given by
$(f\,\phi)(g)=\phi(fg)$ for all $f,g\in R$ and all $\phi\in\omega_R$.
It carries a degree filtration $\mathcal{G}=(G_i\omega_R)_{i\in\mathbb{Z}}$
which is given by $G_i\omega_R=\{ \phi\in\omega_R \mid \phi(F_{-i-1}R)=0\}$
and its affine Hilbert function which satisfies
$\HF^a_{\omega_R}(i)=\dim_K(R) - \HF^a_R(-i-1)$ for $i\in \mathbb{Z}$.
Generalizing some results in~\cite{GKR} and~\cite{Kre2}, we show
that the module structure of~$\omega_R$ is connected to the~CBP
of~$R$. More precisely, Theorem~\ref{CanModCB} says that~$R$
has the CBP if and only if $\Ann_R(G_{-\ri(R)}\omega_R)=\{0\}$.
Based on this characterization and the description of the
structure of~$R$ and the module structure of~$\omega_R$ via
multiplication matrices, we obtain the second main algorithm
of this paper, namely Algorithm~\ref{alg:CheckCBcanmod}
for checking the CBP of~$R$ using the canonical module.
As a nice and useful by-product, we show in Corollary~\ref{CBindependentonK}
that, for an extension field~$L$ of~$K$, the ring~$R$ 
has the~CBP if and only if $R \otimes_K L$ has the~CBP.

In Section~\ref{sec5} we turn our attention to 0-dimensional affine
algebras~$R$ which are locally Gorenstein and have the~CBP.
Extending some results in~\cite{KR3}, we show that~$R$
is locally Gorenstein if and only if~$\omega_R$ contains an element~$\phi$
such that $\Ann_R(\phi)=\{0\}$ and that we can check this effectively
(see Algorithm~\ref{alg:gor}). Then, in Theorem~\ref{CharGorCB},
we characterize locally Gorenstein rings having the~CBP
by the existence of an element $\phi\in\omega_{R\otimes L}$
of order $-\ri(R)$ such that $\Ann_{R\otimes L}(\phi)=\{0\}$.
Here we may have to use a base field extension $K\subseteq L$
or assume that~$K$ is infinite. This characterization
implies useful inequalities for the affine Hilbert function
of~$R$ (see Corollary~\ref{HFinequal}) and allows us to formulate and
prove Algorithm~\ref{alg:GorCB} which checks whether~$R$ is a locally
Gorenstein ring having the CBP using the multiplication matrices of~$R$.
To end this section, we characterize the CBP of~$R$ in the case when the last difference
$\Delta_R=\HF_R(\ri(R))-\HF_R(\ri(R)-1)$ is one (see Corollary~\ref{deltaqual1}).

The topic of the last section is to characterize 0-dimensional
affine algebras which are strict Gorenstein rings. This property means that
the graded ring $\grFR$ with respect to the degree filtration is 
a Gorenstein ring. In the projective case, the corresponding 0-dimensional
schemes are commonly called arithmetically Gorenstein.
Our first characterization of strict Gorenstein rings improves the
results in~\cite{DGO} and~\cite{Kre1}. More precisely, in Theorem~\ref{CharSGor1}
we show that~$R$ is strictly Gorenstein if and only if it has the CBP and
a symmetric affine Hilbert function. In particular, it follows that these rings
are locally Gorenstein. Then we define the strict CBP of~$R$ by
the CBP of $\grFR$ and show that it implies the CBP of~$R$ 
(see Proposition~\ref{SCBPimpliesCBP}).
Finally, we obtain a second characterization of strict Gorenstein rings:
in Theorem~\ref{CharSGor2} we prove that~$R$ is a strict Gorenstein ring if and
only if~$R$ has the strict CBP and $\Delta_R=1$.
Since strict complete intersections are strict Gorenstein rings,
this brings us full circle back to the historic origins of the CBP,
with the difference that now we can treat possibly non-reduced 
affine algebras with possibly non-rational support over arbitrary base fields.

All theorems and algorithms in this paper are amply illustrated by
non-trivial examples. These examples were calculated using the 
computer algebra system \cocoa~\cite{CoCoA}. Unless explicitly stated otherwise,
we use the definitions and notations given in~\cite{KR1}, \cite{KR2}, 
and~\cite{KR3}.

\bigbreak
%
%

\section{Zero-Dimensional Affine Algebras}\label{sec2}

Throughout this paper we let $K$ be a field and~$R$ a 0-dimensional
affine $K$-al\-ge\-bra. This means that~$R$ is a ring of the form
$R=P/I$, where $P=K[x_1,\dots,x_n]$ is a polynomial ring over~$K$
and~$I$ is a 0-dimensional ideal in~$P$. It is well-known that
in this case $R$, viewed as a $K$-vector space,
has finite dimension (see for instance~\cite{KR1}, Proposition 3.7.1).
Equivalently, we can take the geometric point of view and
consider the 0-dimensional subscheme $\mathbb{X}=\Spec(P/I)$
of the affine space $\mathbb{A}^n_K$ defined by~$I$. Then~$R$
is the affine coordinate ring of~$\mathbb{X}$.

Let us start by recalling some insights into the ring structure of~$R$
from~\cite{KR3}, Chapter~4, and fix the corresponding notation.

\begin{notation}
The ideal has a primary decomposition of the form
$$
I \;=\; \Q_1 \cap \cdots \cap \Q_s
$$
where the ideals $\Q_i$ are 0-dimensional primary ideals of~$P$
and are called the {\bf primary components} of~$I$.
The corresponding primes $\M_i=\Rad(\Q_i)$ are maximal ideals of~$P$.
They are called the {\bf maximal components} of~$I$.

The image of~$\Q_i$ in~$R$ will be denoted by~$\q_i$, and for the image
of~$\M_i$ in~$R$ we write~$\m_i$. Then the primary decomposition of the
zero ideal of~$R$ is given by $\langle 0\rangle = \q_1\cap\cdots\cap\q_s$,
and we have $\m_i=\Rad(\q_i)$ for $i=1,\dots,s$.

By applying the Chinese Remainder Theorem to this primary decomposition,
we obtain an isomorphism
$$
\imath:\;\; R \;\cong\; R/\q_1 \times \cdots\times R/\q_s
$$
which is called the {\bf decomposition of~$R$ into local rings}.
For $i=1,\dots,s$, the
ring $R_i=R/\q_i$ is a 0-dimensional local ring with maximal ideal
$\bar\m_i = \m_i/\q_i$. The ideal $\Soc(R_i)=\Ann_{R_i}(\bar\m_i)$
is called the {\bf socle} of~$R_i$. 

The field $L_i=R_i/\bar\m_i\cong R/\m_i$
is the {\bf residue field} of~$R_i$ and its $K$-vector space dimension
will be denoted by $\ell_i = \dim_K(L_i)$.
\end{notation}

The following proposition characterizes the smallest possible
non-zero ideals in~$R$, or equivalently, the smallest ideals in~$P$
strictly containing~$I$.

\begin{proposition}\label{charsmallid}
In the above setting let~$J$ be an ideal in~$P$ which contains~$I$
properly, and let $\bar J$ be its image in~$R$.
\begin{enumerate}
\item[(a)] The primary decomposition of the ideal~$\bar J$ is of the
form $\bar J = \q'_1\cap\cdots\cap \q'_s$, where 
$\q'_1,\dots,\q'_s$ are ideals in~$R$ such that $\q_j\subseteq \q'_j$
for $j=1,\dots,n$ and $\q_i\subset \q'_i$ for some $i\in\{1,\dots,n\}$.

\item[(b)] The primary decomposition of the ideal~$J$ is of the
form $\Q'_1\cap \cdots \cap \Q'_s$ where 
$\Q'_1,\dots,\Q'_s$ are ideals in~$P$ such that $\Q_j\subseteq \Q'_j$
for $j=1,\dots,n$ and $\Q_i\subset \Q'_i$ for some $i\in\{1,\dots,n\}$.

\item[(c)] For some $i\in \{1,\dots,s\}$, we have $\dim_K(\Q'_i/\Q_i) =
\dim_K(\q'_i/\q_i) \ge \ell_i$.

\item[(d)] If we have $\dim_K(\q'_i/\q_i)=\ell_i$ for some
$i\in\{1,\dots,s\}$ then every element $f\in \q'_i \setminus \q_i$
satisfies $\Ann_{R_i}(\bar f)=\bar\m_i$.
\end{enumerate}
\end{proposition}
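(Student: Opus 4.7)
The plan is to reduce everything to the product decomposition $\imath : R \cong R_1 \times \cdots \times R_s$ and then transfer the results between $R$ and $P$ via the quotient map $\pi : P \to R$. Because each $R_j$ is 0-dimensional local with maximal ideal $\bar\m_j$, every ideal in the product splits as $\bar J = \bar J_1 \times \cdots \times \bar J_s$ with each $\bar J_j$ either all of $R_j$ or $\bar\m_j$-primary. Setting $\q'_j$ to be the preimage of $\bar J_j$ under the projection $R \to R_j$ yields an $\m_j$-primary ideal of~$R$ satisfying $\q_j \subseteq \q'_j$, together with $\bar J = \bigcap_j \q'_j$. The hypothesis $\bar J \supsetneq \langle 0 \rangle$ forces $\bar J_i \ne \{0\}$, and hence $\q_i \subsetneq \q'_i$, for at least one~$i$. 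This proves~(a).

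For~(b), I would set $\Q'_j = \pi^{-1}(\q'_j)$. Preimages of primary ideals under ring homomorphisms are primary, so each $\Q'_j$ is $\M_j$-primary; the chain $\Q_j \subseteq \Q'_j$ and the strict inclusion at the distinguished index~$i$ are immediate from~(a), and $J = \pi^{-1}(\bar J) = \bigcap_j \Q'_j$ because $\pi^{-1}$ commutes with intersections.

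The first equality in~(c) follows from the observation that $\pi|_{\Q'_i} : \Q'_i \to \q'_i$ is surjective with kernel $I \subseteq \Q_i$, and hence induces an isomorphism $\Q'_i/\Q_i \cong \q'_i/\q_i$. For the inequality I would fix an index~$i$ with $\bar J_i \ne \{0\}$ and work inside the Artinian local ring~$R_i$. The key step, which I expect to be the one nontrivial point, is that the socle is \emph{essential} in $R_i$: for any nonzero $\bar g \in \bar J_i$, nilpotency of $\bar\m_i$ yields a minimal $k \ge 1$ with $\bar\m_i^k \bar g = 0$, so $\bar\m_i^{k-1}\bar g$ is a nonzero element of $\bar J_i \cap \Soc(R_i)$. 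Since $\Soc(R_i)$ is annihilated by $\bar\m_i$, its $R_i$-module structure factors through $L_i = R_i/\bar\m_i$, making every nonzero $R_i$-submodule an $L_i$-vector space of $K$-dimension at least $\ell_i = \dim_K(L_i)$. Therefore $\dim_K(\q'_i/\q_i) = \dim_K(\bar J_i) \ge \dim_K(\bar J_i \cap \Soc(R_i)) \ge \ell_i$.

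For~(d), the assumption $\dim_K(\bar J_i) = \ell_i$ combined with the lower bound from~(c) forces $\bar J_i = \bar J_i \cap \Soc(R_i)$, i.e., $\bar J_i \subseteq \Soc(R_i) = \Ann_{R_i}(\bar\m_i)$. Any $f \in \q'_i \setminus \q_i$ then has nonzero image $\bar f \in \bar J_i$ with $\bar\m_i \bar f = 0$, so $\bar\m_i \subseteq \Ann_{R_i}(\bar f)$; since $\bar f \ne 0$, the annihilator is a proper ideal of the local ring $R_i$, whence $\Ann_{R_i}(\bar f) = \bar\m_i$. Apart from the socle-essentiality argument, the entire proof is bookkeeping with the product decomposition and the quotient map.
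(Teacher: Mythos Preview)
Your proof is correct, and for parts~(a) and~(b) it is essentially identical to the paper's. For~(c), however, you argue differently. The paper picks any $f\in\q'_i\setminus\q_i$ and bounds $\dim_K(\q'_i/\q_i)$ from below by the dimension of the principal ideal $\langle\bar f\rangle\subseteq R_i$, using the isomorphism $\langle\bar f\rangle\cong R_i/\Ann_{R_i}(\bar f)$ together with the inclusion $\Ann_{R_i}(\bar f)\subseteq\bar\m_i$ to obtain $\dim_K\langle\bar f\rangle\ge\dim_K(R_i/\bar\m_i)=\ell_i$. You instead establish that the socle is essential in~$R_i$ and bound from below by $\dim_K(\bar J_i\cap\Soc(R_i))$, invoking the $L_i$-vector-space structure of the socle. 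Both chains then collapse under the hypothesis of~(d): the paper's collapses directly to $\Ann_{R_i}(\bar f)=\bar\m_i$, whereas yours first yields $\bar J_i\subseteq\Soc(R_i)$ and then the annihilator statement. The paper's route is marginally shorter because the conclusion of~(d) already sits inside its chain for~(c); your route has the virtue of isolating the essentiality of the socle, a structural fact the paper relies on later in a different guise.
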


\begin{proof}
To prove~(a), we apply the decomposition of~$R$ into local
rings. Then the ideal $\imath(\bar{J})$ is of the
form $\imath(\bar{J}) = J_1\times \cdots \times J_s$. This implies that
the ideals $\q'_i=\imath^{-1}(\langle 0\rangle \times \cdots \times
\langle 0\rangle \times J_i \times \langle 0\rangle \times \cdots \times
\langle 0\rangle)$ satisfy the claim.

Since claim~(b) follows immediately from~(a), we prove~(c) next.
For an element $f\in \q'_i\setminus \q_i$, we have
$\dim_K(\q'_i/\q_i)\ge \dim_K(\langle f\rangle/\q_i) = \dim_K(\langle
\bar f\rangle)$. Since $\bar f$ is a non-zero element of the local ring~$R_i$,
we get the inclusion $\Ann_{R_i}(\bar f)\subseteq \bar \m_i$.
This yields $\dim_K (\langle\bar f\rangle) =
\dim_K (R_i/\Ann_{R_i}(\bar f)) \ge \dim_K(R_i/\bar \m_i) = \ell_i$,
and the claim follows.

Finally we show~(d). Here all inequalities in the proof of~(c)
have to be equalities, and thus $\dim_K (R_i/\Ann_{R_i}(\bar f)) =
\dim_K(R_i/\bar \m_i)$ holds. Hence the
containment $\Ann_{R_i}(\bar f)\subseteq \bar \m_i$ is an equality, too.
\end{proof}

This proposition motivates the following definition.

\begin{definition}\label{MinimalQiDivisor}
In the above setting, let~$\ell_i = \dim_K(L_i)= \dim_K(P/\M_i)$
for $i=1,\dots,s$.
\begin{enumerate}
\item[(a)]  An ideal $J$ in $P$ is called a {\bf $\Q_i$-divisor} of~$I$ if~$J$
is of the form $J = \Q_1\cap\cdots\cap\Q_i'\cap\cdots\cap \Q_s$
with an ideal $\Q'_i$ in~$P$ such that $\Q_i\subset \Q_i'\subseteq \M_i$.

\item[(b)] An ideal $J$ in $P$ is called a {\bf minimal $\Q_i$-divisor} of~$I$
if it is a $\Q_i$-divisor of~$I$ and $\dim_K(J/I) = \ell_i$.
\end{enumerate}
\end{definition}

Using the decomposition of $R$ into local rings, we deduce that
if the ideal~$J$ is a $\Q_i$-divisor of~$I$ then 
$\ell_i=\dim_K(J/I) = \dim_K(\Q_i'/\Q_i)$.
Let us also translate this definition into the language of Algebraic
Geometry (see~\cite{Lon} and \cite{KL}).

\begin{definition}
Let $\mathbb{X}$ be the 0-dimensional subscheme of~$\mathbb{A}^n_K$
defined by~$I$.
\begin{enumerate}
\item[(a)] The set $\Supp(\mathbb{X}) = \{p_1,\dots,p_s\}$,
where $p_i=\mathcal{Z}(\M_i)$ for $i=1,\dots,s$, is called
the {\bf support} of~$\mathbb{X}$.

\item[(b)] Given $i\in\{1,\dots,s\}$, a subscheme~$\mathbb{Y}$
of~$\mathbb{X}$ is called a {\bf $p_i$-subscheme} of~$\mathbb{X}$ if
$\mathcal{O}_{\mathbb{Y},p_j} = \mathcal{O}_{\mathbb{X},p_j}$
for every $j\ne i$.

\item[(c)] Given $i\in\{1,\dots,s\}$, a $p_i$-subscheme~$\mathbb{Y}$
of the scheme~$\mathbb{X}$ is said to be a {\bf maximal $p_i$-subscheme} if
$\deg(\mathbb{Y})=\deg(\mathbb{X})-\ell_i$.
\end{enumerate}
\end{definition}

Clearly, the defining ideal of a $p_i$-subscheme $\mathbb{Y}$
of~$\mathbb{X}$ is a $\Q_i$-divisor of~$I$, and vice versa.
Moreover, since $\deg(\mathbb{X})=\dim_K(R)$, a maximal
$p_i$-subscheme of~$\mathbb{X}$ corresponds to a minimal
$\Q_i$-divisor. Let us see an example.

\begin{example}\label{exQideal}
Let $K$ be a field, let $P = K[x,y]$, and let $\Q = \langle x^2, y^2\rangle$.
Clearly, the ideal $\Q$ is $\M$-primary for $\M=\langle x,y\rangle$,
and we have $\ell=\dim_K(P/\M)=1$.

Now we consider the ideal $J_1= \Q + \langle x\rangle =
\langle x,y^2\rangle$. Clearly $J_1$ is $\M$-primary and hence
a $\Q$-divisor of~$\Q$. Since we have $\dim_K(J_1/\Q)=2>\ell$, the
ideal~$J_1$ is not a minimal $\Q$-divisor of~$\Q$.

Next we look at the ideal $J_2=\Q+\langle xy\rangle = \langle x^2,xy,y^2\rangle$.
Again it is clear that $J_2$ is $\M$-primary, and therefore a
$\Q$-divisor of~$\Q$. In this case we get the equality $\dim_K(J_2/\Q)=1=\ell$,
whence $J_2$ is even a minimal $\Q$-divisor of~$\Q$.
\end{example}

Useful invariants of a 0-dimensional affine algebra
are given by the values of its affine Hilbert function which
we recall next. For this purpose, we equip~$P$ with the
(standard) {\bf degree filtration} $\widetilde{\mathcal{F}}
= (F_iP)_{i\in\mathbb{Z}}$, where
$$
F_iP \;=\;  \{f\in P \mid \deg(f) \le i\} \;\cup\; \{0\}
$$
This is an increasing filtration which satisfies
$F_iP=\{0\}$ for $i<0$ and $F_0P=K$.
For every $i\in\mathbb{Z}$, let $F_iI=F_iP\cap I$, and let
$F_iR = F_iP/F_iI$. Then the family $(F_iI)_{i\in\mathbb{Z}}$
is called the {\bf induced filtration} on~$I$, and the family
$\mathcal{F}= (F_iR)_{i\in\mathbb{Z}}$
is a $\mathbb{Z}$-filtration on~$R$
which is called the {\bf degree filtration} on~$R$.
Note that we have $\bigcup_{i\in\mathbb{Z}} F_iP = P$ and
$\bigcup_{i\in\mathbb{Z}} F_iI = I$, and hence
$\bigcup_{i\in\mathbb{Z}} F_iR = R$.
Since $R$ is 0-dimensional, the degree filtration on~$R$
has only finitely many distinct parts, and we have $F_iR =R$
for $i\gg 0$.

\begin{definition}
Let $R=P/I$ be a 0-dimensional affine $K$-algebra as above.
\begin{enumerate}
\item[(a)] The {\bf affine Hilbert function} of~$R$ is
defined as the map
$$
\HF^a_R: \mathbb{Z}\longrightarrow \mathbb{Z} \hbox{\rm\quad given by\quad}
i\longmapsto \dim_K(F_iR)
$$

\item[(b)] The number $\ri(R)=\min\{i\in\mathbb{Z} \mid
\HF^a_R(j)=\dim_K(R)\hbox{\ \rm  for all\ }j\ge i\}$
is called the {\bf regularity index} of~$R$.

\item[(c)]
The first difference function $\Delta \HF^a_R(i)= \HF^a_R(i)-
\HF^a_R(i-1)$ of the affine Hilbert function of~$R$
is called the {\bf Castelnuovo function} of~$R$.

\item[(d)] The number $\Delta_R=\Delta \HF^a_R(\ri(R))$
is called the {\bf last difference} of~$\HF^a_R$ (or of~$R$).
\end{enumerate}
\end{definition}

It is easy to see that we have $\HF^a_R(i)=0$ for $i<0$ and a chain of
inequalities
$$
1 = \HF^a_R(0) < \HF^a_R(1) < \cdots < \HF^a_R(\ri(R)) = \dim_K(R)
$$
So, the degree filtration on~$R$ is increasing, exhaustive, and
{\bf orderly} in the sense that every non-zero element has an order
according to the following definition (see also~\cite{KR2}, 6.5.10).

\begin{definition}
For $f\in R\setminus \{0\}$, let
$\ord_\Fbar(f)=\min \{i\in\mathbb{Z} \mid f\in F_i R \setminus
F_{i-1}R\}$. This number is called the {\bf order} of~$f$
with respect to~$\Fbar$.
\end{definition}

From the above description of~$\HF^a_R$ it follows that we have
$0\le \ord_\Fbar(f)\le \ri(R)$ for every $f\in R\setminus \{0\}$.
The order of an element represents the smallest degree of
a representative of its residue class modulo~$I$.

\begin{remark}\label{computeOrd}
This description points us to an easy way to calculate
the order of an element: if $F\in P$ represents an element
$f\in R\setminus \{0\}$ and $\sigma$ is a degree compatible term
ordering, then $\ord_{\Fbar}(f)$ is given by the degree
of the normal form $\NF_{\sigma,I}(F)$ (see~\cite{KR1}, Def.~2.4.8).
\end{remark}

For actual computations involving the affine Hilbert function of~$R$,
we like to have the following kind of $K$-basis.

\begin{definition}\label{defdegfiltered}
Let $d=\dim_K(R)$, and let $B=(b_1,\dots,b_d)\in R^d$.
The tuple~$B$ is called a {\bf degree filtered $K$-basis} of~$R$
if $F_iB = B \cap F_i R$ is a $K$-basis of~$F_iR$ for every $i\in\mathbb{Z}$,
and if we have ${\ord_\Fbar(b_1) \le \ord_\Fbar(b_2) \le \cdots \le \ord_\Fbar(b_d)}$.
\end{definition}

In the sequel we assume that $b_1=1$ in each degree filtered basis
$B=(b_1,\dots,b_d)$.

\begin{remark}\label{ordinvariant}
For every degree filtered basis $B=(b_1,\dots,b_d)$
and for every $i\in\{0,\dots,\ri(R)\}$, we have 
$$
\HF^a_R(i) \;=\; \# \{ j\in\{1,\dots,d\} \mid
\ord_\Fbar(b_j)\le i \}
$$
In particular,
the tuple $(\ord_\Fbar(b_1) ,\dots, \ord_\Fbar(b_d))$
is independent of the choice of the degree filtered basis~$B$.
\end{remark}

Remark~\ref{computeOrd} and
Remark~4.6.4 in~\cite{KR3} provide several ways of computing
a degree-filtered basis of~$R$. Moreover,
given a degree filtered basis $B=(b_1,\dots,b_d)$ and an
element $g\in R\setminus \{0\}$, we write $g=a_1 b_1 + \cdots +
a_d b_d$ with $a_i\in K$ for $i=1, \dots, d$ and have the equality
$\ord_\Fbar(g)= \max\{ \ord_\Fbar(b_i) \mid i \in \{1,\dots,d\}
\hbox{\ \rm and\ } a_i\ne 0\}$.

\smallskip
The following example provides a monomial $K$-basis 
which is not degree-filtered.

\begin{example}\label{nondegreefiltered}
Let $K=\mathbb{Q}$, let $P = K[x,y]$,
let $I$ be the vanishing ideal of the affine set
of eight points given by
$p_1=(1,-1)$, $p_2 =(0,2)$, $p_3=(1,1)$,
$p_4 =(1,2)$, $p_5=(0,1)$, $p_6=(1,3)$,
$p_7 = (2,4)$, and $p_8 =(3,4)$, and let $R=P/I$.
The reduced Gr\"{o}bner basis of~$I$ with respect to
\texttt{DegRevLex} is
\begin{gather*}
\{ \; x^{2}y -4x^{2} - xy + 4x,\quad 
x^{3} + xy^{2} -6x^{2} -3xy - y^{2} + 7x + 3y -2,\\
y^{4} -10xy^{2} -5y^{3} + 15x^{2} + 30xy + 15y^{2} -35x -25y + 14,\\
xy^{3} -7xy^{2} - y^{3} + 14xy + 7y^{2} -8x -14y + 8\;\}
\end{gather*}
Since this term ordering is degree compatible,
the residue classes of the elements in the tuple
$(1, y, x, y^2, xy, x^2, y^3, xy^2)$
form a degree-filtered $K$-basis of~$R$
with order tuple $(0,1,1,2,2,2,3,3)$.
On the other hand, the reduced Gr\"{o}bner basis of~$I$ 
with respect to~\texttt{Lex} is
\begin{gather*}
\{\, x^{2} -\tfrac{2}{3}xy^{2} + 2xy -\tfrac{7}{3}x +
\tfrac{1}{15}y^{4} -\tfrac{1}{3}y^{3}
+ y^{2} -\tfrac{5}{3}y + \tfrac{14}{15},\\
xy^{3} -7xy^{2} + 14xy -8x - y^{3} + 7y^{2} -14y + 8,\,
y^{5} -9y^{4} + 25y^{3} -15y^{2} -26y + 24 \,\}
\end{gather*}
So, the residue classes of the elements in the tuple
$B=(1, y, x,  y^2, xy, y^3, xy^2, y^4)$ form a $K$-basis of~$R$.
Since $\bar{y}^4 = 10\bar{x}\bar{y}^2
+ 5\bar{y}^3 - 15\bar{x}^2 - 30\bar{x}\bar{y}
- 15\bar{y}^2 + 35\bar{x} + 25\bar{y} - 14$, 
we have $\ord_\Fbar(\bar{y}^4)=3$.
Altogether, we see that~$B$ is not a degree-filtered basis, since
its order tuple is $(0,1,1,2,2,3,3,3)$.
\end{example}

\bigbreak
%
%

\section{Separators and the Cayley-Bacharach Property}\label{sec3}

In this section we continue to use the notation introduced above.
In particular, we let $R=P/I$ be a 0-dimensional affine $K$-algebra
whose zero ideal has the primary decomposition $\langle 0\rangle =
\q_1\cap \cdots \cap \q_s$, and we let $\m_i=\Rad(\q_i)$
be the maximal ideals of~$R$ for $i=1,\dots,s$.
The following definition generalizes the ones in~\cite{GKR}
and~\cite{Kre2}.

\begin{definition}\label{DefSeparator}
For $i\in\{1,\dots,s\}$, an element $f\in R$ is called a
{\bf separator} for~$\m_i$ if we have
$\dim_K\langle f \rangle= \dim_K(R/\m_i)$ and
$f \in \q_j$ for every $j\ne i$.
\end{definition}

The following characterizations of separators were shown
in~\cite{KR3}, Theorem 4.2.11.

\begin{theorem}{\bf (Characterization of Separators)}\label{charsep}\\
Let $R$ be a 0-dimensional affine $K$-algebra and let $i\in\{1,\dots,s\}$.
For an element $f\in R$, the following conditions are equivalent.
\begin{enumerate}
\item[(a)] The element~$f$ is a separator for~$\m_i$.

\item[(b)] We have $\Ann_R(f)=\m_i$.

\item[(c)] The element~$f$ is a non-zero element of
$(\q_i :_{{}_R} \m_i)\cdot \prod_{j\ne i} \q_j$.

\item[(d)] The image of~$f$ is a non-zero element in the socle of the
local ring $R/\q_i$ and, for $j\ne i$, the image of~$f$
is zero in~$R/\q_j$.
\end{enumerate}
\end{theorem}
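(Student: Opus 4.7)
The plan is to prove all four equivalences by pushing everything through the decomposition $\imath\colon R \to R_1\times\cdots\times R_s$ from Section~\ref{sec2}, and to show that each of~(a)--(d) translates into the single condition that $\imath(f) = (0,\dots,0,f_i,0,\dots,0)$ with $f_i$ a non-zero element of $\Soc(R_i)$. First I would set up a short dictionary: since $\q_j = \ker(R\to R_j)$, we have $f \in \q_j \iff f_j = 0$, and the images $\imath(\q_j)$ and $\imath(\m_i)$ are the obvious componentwise ideals $R_1\times\cdots\times\{0\}\times\cdots\times R_s$ (zero in position~$j$) and $R_1\times\cdots\times\bar\m_i\times\cdots\times R_s$, respectively.

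The implication (a)~$\Leftrightarrow$~(d) is then almost immediate: the condition $f\in\q_j$ for all $j\ne i$ forces $f_j=0$, so $\langle f\rangle\cong\langle f_i\rangle_{R_i}$, and $\dim_K\langle f\rangle = \ell_i = \dim_K L_i$ becomes $\dim_K\langle f_i\rangle_{R_i} = \ell_i$. This is precisely the borderline case treated in Proposition~\ref{charsmallid}(d), which gives $\Ann_{R_i}(f_i) = \bar\m_i$, i.e.\ $f_i$ is a non-zero socle element. Conversely, such an $f_i$ has $\langle f_i\rangle \cong R_i/\bar\m_i = L_i$, yielding (a). For (b)~$\Leftrightarrow$~(d), I would use the elementary fact that annihilators in a product ring are formed componentwise, $\Ann(f_1,\dots,f_s) = \prod_j \Ann_{R_j}(f_j)$; matching this with $\imath(\m_i)$ forces $\Ann_{R_j}(f_j) = R_j$ for $j\ne i$ (equivalently $f_j=0$) together with $\Ann_{R_i}(f_i) = \bar\m_i$ (equivalently $f_i$ is a non-zero socle element), and vice versa.

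For (c)~$\Leftrightarrow$~(d), I would apply $\imath$ to the two constituent ideals separately. Using the componentwise behaviour of colon ideals in a product ring gives $\imath(\q_i :_R \m_i) = R_1\times\cdots\times\Soc(R_i)\times\cdots\times R_s$, and a componentwise calculation (each factor of the product $\prod_{j\ne i}\q_j$ kills exactly one coordinate, and between them they kill every coordinate except the $i$-th) gives $\imath(\prod_{j\ne i}\q_j) = \{0\}\times\cdots\times R_i\times\cdots\times\{0\}$, with $R_i$ in position~$i$. The componentwise product of these two ideals then collapses to $\{0\}\times\cdots\times\Soc(R_i)\times\cdots\times\{0\}$, whose non-zero elements are exactly the $f$ described in~(d). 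The main obstacle I expect is the bookkeeping in this last step: I have to verify carefully that ideal products and colons in~$R$ correspond to componentwise operations under~$\imath$, and that the product in position~$i$ really is $\Soc(R_i)\cdot R_i = \Soc(R_i)$ rather than a strictly smaller ideal. The remaining work is routine chasing of the dictionary established at the outset.
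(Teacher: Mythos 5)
Your argument is correct. Note that the paper itself gives no proof of this theorem --- it is quoted from \cite{KR3}, Theorem 4.2.11 --- but your reduction of all four conditions to the single componentwise statement ``$\imath(f)=(0,\dots,0,f_i,0,\dots,0)$ with $f_i\in\Soc(R_i)\setminus\{0\}$'' via the decomposition into local rings is exactly the technique the paper uses for the adjacent results (Proposition~\ref{charsmallid} and Proposition~\ref{separatoranddivisor}), and every step of your dictionary (ideals, annihilators, colon ideals and ideal products in a finite product of rings are all formed componentwise, and $\Soc(R_i)\cdot R_i=\Soc(R_i)$ since $R_i$ is the unit ideal of the $i$-th factor) checks out.
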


Using the language of Definition~\ref{MinimalQiDivisor},
we can rephrase Definition~\ref{DefSeparator} as follows.

\begin{proposition}\label{separatoranddivisor}
Let $F\in P$, let $f=F+I$ be the residue class of~$F$ in~$R$,
and let $i\in\{1,\dots,s\}$. Then the following conditions
are equivalent.
\begin{enumerate}
\item[(a)] The element $f$ is a separator for~$\m_i$.

\item[(b)] The ideal $J=I+\langle F\rangle$ is a
minimal $\Q_i$-divisor of~$I$.
\end{enumerate}
\end{proposition}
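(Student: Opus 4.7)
The plan is to translate between the two statements using the decomposition $\imath : R \cong R_1 \times \cdots \times R_s$ into local rings, combined with the characterization of separators provided by Theorem \ref{charsep}. The bridge is the simple observation that $\bar{J} = J/I = \langle f \rangle$ as an ideal of $R$, so $\dim_K(J/I) = \dim_K\langle f \rangle$.

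For the implication (b) $\Rightarrow$ (a), I would argue directly from the definitions. If $J = \Q_1 \cap \cdots \cap \Q_i' \cap \cdots \cap \Q_s$ with $\Q_i \subsetneq \Q_i' \subseteq \M_i$, then $J \subseteq \Q_j$ for every $j \neq i$, forcing $F \in \Q_j$ and hence $f \in \q_j$. The minimality condition then yields $\dim_K\langle f \rangle = \dim_K(J/I) = \ell_i = \dim_K(R/\m_i)$, which is precisely the condition of Definition \ref{DefSeparator}.

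For the converse (a) $\Rightarrow$ (b), I would feed the separator condition through the product decomposition. By Theorem \ref{charsep}(d), we can write $\imath(f) = (0, \ldots, \bar{f}_i, \ldots, 0)$, where $\bar{f}_i$ is a non-zero socle element of $R_i$. Hence $\imath(\bar{J})$ is the ideal with $\langle \bar{f}_i \rangle$ in the $i$-th coordinate and zero elsewhere. Applying the product-structure argument of Proposition \ref{charsmallid}(a), the primary decomposition of $\bar{J}$ takes the form
\[
\bar{J} \;=\; \q_1 \cap \cdots \cap \q_{i-1} \cap \q_i' \cap \q_{i+1} \cap \cdots \cap \q_s,
\]
where $\q_i'$ is the preimage in $R$ of $R_1 \times \cdots \times \langle \bar{f}_i \rangle \times \cdots \times R_s$. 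Because $\bar{f}_i$ is a non-zero socle element, $\langle \bar{f}_i \rangle$ is a proper $\bar{\m}_i$-primary ideal of $R_i$, so $\q_i'$ is $\m_i$-primary and satisfies $\q_i \subsetneq \q_i' \subseteq \m_i$. Pulling the decomposition back along $P \twoheadrightarrow R$ produces $J = \Q_1 \cap \cdots \cap \Q_i' \cap \cdots \cap \Q_s$ with $\Q_i \subsetneq \Q_i' \subseteq \M_i$, and the minimality $\dim_K(J/I) = \ell_i$ is built into the separator condition.

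The main technical hurdle is the careful bookkeeping needed to identify $\q_i'$ as the genuine $\m_i$-primary component of $\bar{J}$ and to verify that $\langle \bar{f}_i \rangle \subseteq \bar{\m}_i$; both reduce to the socle condition $\Ann_{R_i}(\bar{f}_i) = \bar{\m}_i$ from Theorem \ref{charsep}(b). A further subtlety is the degenerate case where $R_i$ is already a field (so $\Q_i = \M_i$): there $\bar{f}_i$ is a unit, $\langle \bar{f}_i \rangle = R_i$, and the $i$-th factor of the primary decomposition becomes trivial, corresponding geometrically to simply removing the reduced point $p_i$ from $\mathbb{X}$.
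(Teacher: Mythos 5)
Your argument is correct and follows essentially the same route as the paper: both directions pass through the decomposition into local rings and the characterization of separators in Theorem~\ref{charsep}, with the dimension count $\dim_K(J/I)=\dim_K\langle f\rangle=\ell_i$ supplying minimality. The only cosmetic differences are that you invoke part~(d) of Theorem~\ref{charsep} and verify Definition~\ref{DefSeparator} directly, where the paper uses part~(c) and Proposition~\ref{charsmallid}.d; the degenerate case $\Q_i=\M_i$ that you flag is left implicit in the paper as well.
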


\begin{proof}
To prove that~(a) implies~(b), we note that Theorem~\ref{charsep}.c
implies that $J=I+\langle F\rangle$ is a $\Q_i$-divisor.
Since we have $\dim_K(J/I)=\dim_K\langle f\rangle=\ell_i$,
it is a minimal $\Q_i$-divisor of~$I$. Conversely, we have
$J=I+\langle F\rangle =
\Q_1 \cap \cdots \cap \Q'_i \cap\cdots \cap \Q_s$,
and therefore $f\in \q_j$ for $j\ne i$. Moreover, the condition
$\ell_i =\dim_K(\Q'_i/\Q_i)=\dim_K(\q'_i/\q_i)=\dim_K(\langle \bar f\rangle)$
and Proposition~\ref{charsmallid}.d yield $\Ann_{R/\q_i}(\bar f)=\bar \m_i$,
i.e., the fact that~$\bar f$ is an element of the socle of~$R/\q_i$.
\end{proof}

\begin{remark}\label{twokinds}
Given a maximal ideal~$\m_i$ of~$R$, the separators for~$\m_i$
may not be uniquely determined in two different ways:
\begin{enumerate}
\item[(1)] It is possible that two separators $f,g$ for~$\m_i$
correspond to the same minimal $\Q_i$-divisor of~$I$.
In this case, the ideals $\langle \bar f\rangle$
and $\langle \bar g\rangle$ in~$R/\q_i$ are equal,
but if we have $\ell_i=\dim_K(R/\m_i)>1$,
the orders of~$f$ and~$g$ with respect to~$\Fbar$
may not be equal.

\item[(2)] If $\dim_K(\Soc(R/\q_i))>\ell_i$, there
exist separators $f,g$ for~$\m_i$ which
correspond to different $\Q_i$-divisors of~$I$.
In this case, the ideals $\langle \bar f\rangle$ and 
$\langle\bar g\rangle$ in~$R/\q_i$ are not equal. 
\end{enumerate}
\end{remark}

The following example demonstrates these two kinds of non-uniqueness.

\begin{example}\label{twoSources}
Let $K= \mathbb{Q}$, let $P = K[x,y]$, let~$I$ be the ideal of~$P$ 
generated by $\{ xy,\; y^3,\; x^4+x^2\}$, and let $R=P/I$.
The primary decomposition of~$I$ is given by $I = \Q_1\cap\Q_2$, 
where $\Q_1 = \langle y,\; x^2+1 \rangle$
and $\Q_2 = \langle xy,\; x^2,\: y^3\rangle$.
Clearly, the corresponding maximal components
are the ideals $\M_1 = \Rad(\Q_1) = \Q_1$ and $\M_2 = 
\Rad(\Q_2) = \langle x,\; y\rangle$.
The affine Hilbert function of~$R$ is $(1,3,5,6, 6,\dots)$,
and hence $\ri(R) = 3$.

The residue classes  of~$x^2$ and~$x^3$ in~$R$ are separators for~$\m_1$.
Their orders are~$2$ and~$3$, respectively, since they coincide with their 
normal forms with respect to any degree compatible term ordering.
Thus the equality 
$\Q_1+\langle x^2\rangle = \Q_1 + \langle x^3\rangle =\langle 1\rangle$
shows that this is a case of non-uniqueness of the first kind.

Since we have 
$(\Q_2:\M_2)\cap \Q_1 = \langle y^2,\; xy,\; x^3+x  \rangle$,
the residue classes of~$y^2$ and~$x^3+x$ in~$R$ are separators for~$\m_2$.
Their orders are~$2$ and~$3$, respectively, because they coincide with their 
normal forms with respect to any degree compatible term ordering.
Notice that the two ideals
$\Q_2+\langle y^2\rangle$ and  $\Q_2 + \langle x^3+x\rangle$ are different.
Consequently, this is a case of non-uniqueness of the second kind.
\end{example}

Keeping these sources of non-uniqueness in mind, 
we introduce the following notion.

\begin{definition}
Let $R=P/I$ be a 0-dimensional affine $K$-algebra as above,
let $\m_1,\dots,\m_s$ be the maximal ideals of~$R$, and let
$i\in\{1,\dots,s\}$.
Given a minimal $\Q_i$-divisor~$J$ of~$I$
and its image $\bar{J}$ in~$R$, we let
$$
\ri(\bar{J}) \;=\; \max\{ \ord_\Fbar(f) \mid f\in \bar{J}
\setminus \{0\} \}
$$
Then the number
$$
\sepdeg(\m_i)= \min\{ \ri(\bar{J}) \mid J\hbox{\rm\ is a minimal
$\Q_i$-divisor of\ }I\}
$$
is called the {\bf separator degree} of~$\m_i$ in~$R$.
\end{definition}

The following proposition shows that this definition is
justified in the sense that it agrees with previous
definitions in~\cite{KR2} and~\cite{KR3}.

\begin{proposition}
Let $R=P/I$ be a 0-dimensional affine $K$-algebra as above, 
and let $\m_1,\dots,\m_s$ be the maximal ideals of~$R$.
Moreover, let $i\in\{1,\dots,s\}$, let~$J$
be a minimal $\Q_i$-divisor of~$I$, let $\bar{J}$
be the image of~$J$ in~$R$, and let
$\HF^a_{\bar{J}}(j)=\dim_K(\bar{J} \cap F_jR)$ for $j\in\mathbb{Z}$
be the affine Hilbert function of~$\bar{J}$.
\begin{enumerate}
\item[(a)]
We have $\ri(\bar{J}) = \min\{j\in\mathbb{Z} \mid \HF^a_{\bar{J}}(j)=
\ell_i\}$ and $\HF^a_{\bar{J}}(j)=\ell_i$ 
for all $j\ge \ri(\bar{J})$. In other words, the number
$\ri(\bar{J})$ is the regularity index of the Hilbert function
of~$\bar{J}$ in the sense of~\cite{KR2}, Definition 5.1.8.

\item[(b)] If the maximal ideal~$\m_i$ is linear, then its separator
degree satisfies the equality $\sepdeg(\m_i)=\min \{ \ord_{\Fbar}(f) \mid
f\hbox{\rm \ is a separator for\ }\m_i\}$.
In other words, in this case the number $\sepdeg(\m_i)$ agrees with
the one defined in~\cite{KR3}, Definition 4.6.10.b.
\end{enumerate}
\end{proposition}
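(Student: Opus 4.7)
The plan for both parts is to study the induced filtration $F_j\bar{J}=\bar{J}\cap F_jR$ on the image $\bar{J}$ in~$R$, using the fact that $\dim_K(\bar{J})=\ell_i$ because $J$ is a minimal $\Q_i$-divisor of~$I$.

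For part~(a), I would first observe that the induced filtration on~$\bar{J}$ inherits from~$\Fbar$ the property of being increasing, exhaustive, and orderly, so the construction of a degree filtered $K$-basis described after Definition~\ref{defdegfiltered} applies verbatim to yield basis elements $g_1,\dots,g_{\ell_i}$ of~$\bar{J}$ with nondecreasing orders. Since the order of a nonzero element equals the maximum of the orders of those basis members appearing with a nonzero coefficient, it would follow at once that
\[
\ri(\bar{J})=\max\{\ord_\Fbar(f):f\in\bar{J}\setminus\{0\}\}=\ord_\Fbar(g_{\ell_i}).
\]
The two parts of~(a) then fall out by inspection: for $j\ge\ri(\bar{J})$, all $g_k$ lie in $F_jR$, giving $\HF^a_{\bar{J}}(j)=\ell_i$, while for $j<\ri(\bar{J})$ the top basis element $g_{\ell_i}$ lies outside $F_jR$, forcing $\HF^a_{\bar{J}}(j)<\ell_i$.

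For part~(b), I would exploit the hypothesis $\ell_i=1$, which forces every minimal $\Q_i$-divisor~$J$ to produce a one-dimensional image $\bar{J}\subset R$. In that setting every nonzero element of~$\bar{J}$ generates it, so all these elements share a common order, which by part~(a) equals $\ri(\bar{J})$. Combined with Proposition~\ref{separatoranddivisor}, this would show that every nonzero $\bar{f}\in\bar{J}$ is a separator for~$\m_i$ with $\ord_\Fbar(\bar{f})=\ri(\bar{J})$, and conversely that every separator~$f$ for~$\m_i$, via a lift $F\in P$, gives rise to the minimal $\Q_i$-divisor $I+\langle F\rangle$ whose image in~$R$ is $K\bar{f}$. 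Taking the minimum over the resulting surjection from separators onto minimal $\Q_i$-divisors then delivers
\[
\sepdeg(\m_i)=\min\{\ri(\bar{J}):J\text{ a minimal }\Q_i\text{-divisor of }I\}=\min\{\ord_\Fbar(f):f\text{ a separator for }\m_i\}.
\]
The only mildly delicate step I anticipate is checking that the construction of a degree filtered basis transfers cleanly to the subspace $\bar{J}$ with its induced filtration; this should be routine once one observes that the three defining properties of~$\Fbar$ descend to any $K$-subspace.
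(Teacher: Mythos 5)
Your proposal is correct and follows essentially the same route as the paper: both analyze the induced filtration $\bar{J}\cap F_jR$ on the $\ell_i$-dimensional space $\bar{J}$ for part~(a) and reduce part~(b) to the observation that $\ell_i=1$ makes $\bar{J}$ one-dimensional, so that via Proposition~\ref{separatoranddivisor} every nonzero element of $\bar{J}$ is a separator of order $\ri(\bar{J})$. The only cosmetic difference is that you package part~(a) through a degree filtered basis of $\bar{J}$, whereas the paper argues directly from the monotonicity of $\HF^a_{\bar{J}}$ and the fact that an element of order $j$ forces a strict jump at $j$; your "delicate step" of transferring the filtered-basis construction to a subspace is indeed routine.
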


\begin{proof}
First we prove claim~(a). By their definition, affine Hilbert functions
are non-decreasing, and by Definition~\ref{MinimalQiDivisor},
we have $\HF^a_{\bar{J}}(j)=\ell_i$ for $j\gg 0$. If an element
$f\in\bar{J}$ satisfies $\ord_\Fbar(f)=j$ for some $j\in\mathbb{Z}$
then we have $f\notin \bar{J}\cap F_{j-1}R$, and therefore
$\HF^a_{\bar{J}}(j)> \HF^a_{\bar{J}}(j-1)$. This implies the claim.

Next we show~(b). Since $\m_i$ is linear, we have $\ell_i=1$, and
therefore $\bar{J}=K\cdot f$ for every $f\in\bar{J}\setminus \{0\}$.
Hence we get $\ri(\bar{J})=\ord_\Fbar(f)$, and the claim follows.
\end{proof}

For the separator degree of a maximal ideal of~$R$, we have
the following bound.

\begin{proposition}\label{inequalitiesforri}
Let $R=P/I$ be a 0-dimensional affine $K$-algebra whose maximal
ideals are $\m_1,\dots,\m_s$, and let $i\in\{1,\dots,s\}$.
\begin{enumerate}
\item[(a)] Given a minimal $\Q_i$-divisor~$J$ of the ideal~$I$ and its image~$\bar{J}$
in~$R$,  we have  the inequality $\ri(\bar{J}) \le \ri(R)$. In particular, we have
$\ord_\Fbar(f)\le \ri(R)$ for every non-zero element $f \in \bar{J}$.

\item[(b)] We have $\sepdeg(\m_i)\le \ri(R)$.

\end{enumerate}
\end{proposition}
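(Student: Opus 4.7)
The plan is to reduce both inequalities to a single universal bound, namely that every nonzero element of~$R$ has order at most $\ri(R)$ with respect to the degree filtration~$\Fbar$. This bound is immediate from the definition of the regularity index: since $\HF^a_R(\ri(R))=\dim_K(R)$, we have $F_{\ri(R)}R=R$, so any nonzero $f\in R$ lies in $F_{\ri(R)}R$ and hence satisfies $\ord_\Fbar(f)\le \ri(R)$.

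For part~(a), I would apply this observation to the elements of $\bar{J}$. Since $\bar{J}\subseteq R$, every nonzero $f\in\bar{J}$ satisfies $\ord_\Fbar(f)\le \ri(R)$, which is the ``In particular'' claim. Taking the maximum over all such $f$ and using the definition $\ri(\bar{J})=\max\{\ord_\Fbar(f)\mid f\in \bar{J}\setminus\{0\}\}$ then yields $\ri(\bar{J})\le \ri(R)$.

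Part~(b) follows at once from~(a), provided we know that the set of minimal $\Q_i$-divisors of~$I$ is nonempty so that the minimum in the definition of $\sepdeg(\m_i)$ is taken over a nonempty set. For this, the local ring $R/\q_i$ is a nonzero Artinian local ring, so its socle is nonzero; picking a nonzero element of the socle and lifting it via the Chinese Remainder isomorphism $\imath$ (inserting zeros in every other factor) produces a separator for~$\m_i$ by Theorem~\ref{charsep}, and Proposition~\ref{separatoranddivisor} then converts this separator into a minimal $\Q_i$-divisor of~$I$. Combining this with the bound from~(a) gives $\sepdeg(\m_i)\le \ri(R)$.

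There is no real obstacle: the proof is a short unwinding of the definitions of $\ri(R)$, $\ri(\bar{J})$ and $\sepdeg(\m_i)$, together with the standard existence of socle elements in an Artinian local ring to ensure that the relevant minimum is not taken over the empty set.
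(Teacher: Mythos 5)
Your proof is correct and takes essentially the same route as the paper: both arguments rest on the single observation that $F_{\ri(R)}R=R$, so every nonzero element of $\bar{J}$ has order at most $\ri(R)$, and part~(b) then follows from the definition of $\sepdeg(\m_i)$. The only difference is your explicit verification that minimal $\Q_i$-divisors exist (via a nonzero socle element of the Artinian local ring $R/\q_i$), a point the paper leaves implicit.
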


\begin{proof}
Claim~(a) follows from the observation that the equality $F_{\ri(R)}R
=R$ implies the equality $J\cap F_{\ri(R)}R = J$. Claim~(b)
follows from~(a) and the definition of~$\sepdeg(\m_i)$.
\end{proof}

This proposition allows us to characterize maximal separator
degrees as follows.

\begin{corollary}\label{sepdegandord}
Under the assumptions of the proposition,
the following conditions are equivalent.
\begin{enumerate}
\item[(a)] For every minimal $\Q_i$-divisor~$J$ of~$I$ and its image 
$\bar{J}$ in~$R$, there is a generator~$f$ of~$\bar{J}$ 
such that $\ord_{\Fbar}(f) = \ri(R)$.

\item[(b)] For every minimal $\Q_i$-divisor~$J$ of~$I$ and its image 
$\bar{J}$ in~$R$, we have the equality $\ri(\bar{J}) = \ri(R)$.

\item[(c)] We have $\sepdeg(\m_i) = \ri(R)$.
\end{enumerate}
\end{corollary}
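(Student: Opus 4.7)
The plan is to prove the implications in the cycle (a) $\Rightarrow$ (b) $\Rightarrow$ (a) for the first equivalence and (b) $\Leftrightarrow$ (c) for the second, since everything else follows formally from Proposition~\ref{inequalitiesforri} once we know that each minimal $\Q_i$-divisor of~$I$ yields a cyclic ideal of~$R$. The main preliminary observation I would establish is that for any minimal $\Q_i$-divisor~$J$ of~$I$ with image~$\bar{J}$ in~$R$, and any $f\in\bar{J}\setminus\{0\}$, we have $\bar{J} = Rf$. Indeed, by Proposition~\ref{separatoranddivisor} every non-zero element of~$\bar{J}$ is a separator for~$\m_i$, and Theorem~\ref{charsep}.b then gives $\Ann_R(f)=\m_i$, so that $Rf \cong R/\m_i$ has $K$-dimension $\ell_i = \dim_K(\bar{J})$; combined with the inclusion $Rf\subseteq \bar{J}$ this forces equality.

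For (a)~$\Rightarrow$~(b), pick a generator $f$ of $\bar{J}$ with $\ord_\Fbar(f)=\ri(R)$. Then by definition $\ri(\bar{J}) \ge \ord_\Fbar(f) = \ri(R)$, and the reverse inequality comes for free from Proposition~\ref{inequalitiesforri}.a, giving equality. For (b)~$\Rightarrow$~(a), the assumption $\ri(\bar{J})=\ri(R)$ means, by the definition of $\ri(\bar{J})$ as a maximum over a finite set of values, that there exists $f\in\bar{J}\setminus\{0\}$ with $\ord_\Fbar(f)=\ri(R)$. By the preliminary observation this $f$ generates $\bar{J}$ as an ideal of~$R$, which is precisely what~(a) demands.

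The equivalence (b)~$\Leftrightarrow$~(c) is essentially formal. Since $\sepdeg(\m_i)$ is defined as the minimum of the numbers $\ri(\bar{J})$ as~$J$ ranges over the minimal $\Q_i$-divisors of~$I$, condition~(b) immediately implies~(c). Conversely, Proposition~\ref{inequalitiesforri}.a gives $\ri(\bar{J}) \le \ri(R)$ for every minimal $\Q_i$-divisor~$J$; if the minimum of these values equals $\ri(R)$, then each individual value must equal $\ri(R)$, which is~(b).

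There is no real obstacle here: the key content is packed into the preliminary observation that $\bar{J}$ is a cyclic $R$-module with annihilator~$\m_i$ (so that ``generator'' and ``non-zero element'' coincide), and into the bound $\ri(\bar{J})\le\ri(R)$ that was already established. The rest is unwinding the definitions of $\ri(\bar{J})$ (a max) and $\sepdeg(\m_i)$ (a min over such maxima).
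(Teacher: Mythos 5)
Your proposal is correct and follows the same route the paper intends: the paper's own proof is just the remark that the claim ``follows easily from the definitions and part~(a) of the proposition,'' and you have simply filled in those details, the only substantive one being the observation that every non-zero element of~$\bar{J}$ generates it (so that ``generator'' in~(a) and ``element'' in the definition of $\ri(\bar{J})$ coincide), which indeed follows from Proposition~\ref{charsmallid}.d and Theorem~\ref{charsep} as you argue. One tiny citation quibble: for the preliminary observation it is really Proposition~\ref{charsmallid}.d together with Theorem~\ref{charsep}.d (or Proposition~\ref{separatoranddivisor} plus Proposition~\ref{charsmallid}.c to see that $I+\langle F\rangle$ is itself a minimal $\Q_i$-divisor) that does the work, but this does not affect the validity of the argument.
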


\begin{proof}
The proof follows easily from the definitions 
and part~(a) of the proposition.
\end{proof}

The preceding corollary suggests the following definition.

\begin{definition}
Let $R=P/I$ be a 0-dimensional affine $K$-algebra, and let
$\mathbb{X}=\Spec(P/I)$ be the 0-dimensional affine scheme
defined by~$I$. We say that~$R$ has the {\bf Cayley-Bacharach property (CBP)},
or that $\mathbb{X}$ is a {\bf Cayley-Bacharach scheme},
if the equivalent conditions of the above corollary are satisfied.
\end{definition}

By the above results, this definition agrees with the usual
definition given in~\cite{Kre2}, Section~2 and~\cite{KR3}, Definition
4.6.12 if all maximal ideals are linear. Let us also rephrase
it using Hilbert functions of subschemes of~$\mathbb{X}$.
Here the affine Hilbert function of a scheme is the affine Hilbert function
of its coordinate ring.

\begin{corollary}
Let $\mathbb{X}$ be a 0-dimensional subscheme of~$\mathbb{A}^n_K$
with affine coordinate ring $R=P/I$, let $\Supp(\mathbb{X})=
\{p_1,\dots,p_s\}$, and let $\ell_i=\dim_K(\mathcal{O}_{\mathbb{X},p_i}
/\m_{\mathbb{X},p_i})$ for $i=1,\dots,s$. Then the following conditions
are equivalent.
\begin{enumerate}
\item[(a)] The scheme~$\mathbb{X}$ is a Cayley-Bacharach scheme.

\item[(b)] For $i\in\{1,\dots,s\}$ and for every maximal
$p_i$-subscheme $\mathbb{Y}$ of~$\mathbb{X}$, we have
$\HF^a_{\mathbb{Y}}(\ri(R)-1) > \HF^a_{\mathbb{X}}(\ri(R)-1)
-\ell_i$.
\end{enumerate}
\end{corollary}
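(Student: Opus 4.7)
The plan is to reduce (a) to a statement about the affine Hilbert functions of the ideals $\bar{J}$ for all minimal $\Q_i$-divisors $J$ of $I$, and then translate that statement into (b) via a short exact sequence.

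First, I would observe that maximal $p_i$-subschemes $\mathbb{Y}$ of $\mathbb{X}$ correspond bijectively to minimal $\Q_i$-divisors $J$ of $I$, namely $\mathbb{Y}=\Spec(P/J)$, and that for such $J$ the induced filtration on $\bar{J}=J/I\subseteq R$ satisfies
$\bar{J}\cap F_jR = (J\cap F_jP)/F_jI$. Consequently, from the short exact sequence $0\to \bar{J}\cap F_jR\to F_jR\to F_jP/(J\cap F_jP)\to 0$ I would deduce the additivity
$$
\HF^a_{\mathbb{X}}(j) \;=\; \HF^a_{\bar{J}}(j) + \HF^a_{\mathbb{Y}}(j)
$$
for every $j\in\mathbb{Z}$.

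Next I would use part~(a) of the Proposition preceding Corollary~\ref{sepdegandord}, together with the fact that $\dim_K(\bar{J})=\ell_i$, to obtain the key equivalence
$$
\HF^a_{\bar{J}}(\ri(R)-1) \;=\; \ell_i \;\Longleftrightarrow\; \ri(\bar{J})\le \ri(R)-1 \;\Longleftrightarrow\; \ri(\bar{J})<\ri(R).
$$
Since $\HF^a_{\bar{J}}(j)\le \ell_i$ always, the negation reads $\HF^a_{\bar{J}}(\ri(R)-1)<\ell_i$ iff $\ri(\bar{J})=\ri(R)$ (using Proposition~\ref{inequalitiesforri}(a)).

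Finally, I would invoke Corollary~\ref{sepdegandord} to rewrite (a) as the statement that $\ri(\bar{J})=\ri(R)$ for every $i\in\{1,\dots,s\}$ and every minimal $\Q_i$-divisor $J$ of $I$. Combining this with the equivalence above and the additivity of the affine Hilbert functions, condition~(a) becomes
$$
\HF^a_{\mathbb{X}}(\ri(R)-1) - \HF^a_{\mathbb{Y}}(\ri(R)-1) \;<\; \ell_i
$$
for every $i$ and every maximal $p_i$-subscheme $\mathbb{Y}$, which is exactly~(b). The main subtle point is the bookkeeping in the equivalence between $\HF^a_{\bar{J}}(\ri(R)-1)=\ell_i$ and the drop of the regularity index of $\bar{J}$; but this is essentially packaged already in part~(a) of the preceding Proposition, so no further obstacle is expected.
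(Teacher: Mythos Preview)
Your proposal is correct and follows essentially the same route as the paper: establish the additivity $\HF^a_{\mathbb{X}}(j)=\HF^a_{\bar{J}}(j)+\HF^a_{\mathbb{Y}}(j)$, rewrite the inequality in~(b) as $\HF^a_{\bar{J}}(\ri(R)-1)<\ell_i$, identify this with $\ri(\bar{J})=\ri(R)$, and conclude via Corollary~\ref{sepdegandord}. The paper's version is terser---it asserts the additivity ``by definition'' and the equivalence with $\ri(\bar{J})=\ri(R)$ in one line---whereas you spell out the short exact sequence and invoke the preceding proposition explicitly, but the underlying argument is identical.
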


\begin{proof}
Let $J$ be the vanishing ideal of a maximal $p_i$-subscheme~$\mathbb{Y}$
of~$\mathbb{X}$, and let~$\bar{J}$ be its image in~$R$.
By definition, we have
$\HF^a_{\mathbb{X}}(j) - \HF^a_{\mathbb{Y}}(j) = \HF^a_{\bar{J}}(j)$
for all $j\in\mathbb{Z}$.
Hence the inequality in~(b) can be rewritten as $\HF^a_{\bar{J}}(\ri(R)-1) <\ell_i$.
Therefore the inequality in~(b) is equivalent to $\ri(\bar{J}) = \ri(R)$, and the 
conclusion follows from Corollary~\ref{sepdegandord}.
\end{proof}

In the last part of this section we discuss methods for
computing the separator degree and for
checking whether the separator degree of
a given maximal ideal of~$R$ is maximal.
For this purpose it is convenient to introduce the following terminology.

\begin{definition}
Let $R=P/I$ be a 0-dimensional affine $K$-algebra,
and let $\imath:\, R\longrightarrow R/\q_1 \times\cdots\times R/\q_s$
be its decomposition into local rings. For $i=1,\dots,s$, let
$S_i$ be the preimage
$$
S_i \;=\; \imath^{-1} \big( \{0\} \times \cdots\times \{0\} \times
\Soc(R/\q_i) \times \{0\} \times\cdots\times \{0\} \big)
$$
Then the $K$-vector spaces $S_i$ are called the {\bf socle spaces}
of~$R$.
\end{definition}

Notice that the socle spaces~$S_i$ are ideals in~$R$ having a very
particular structure. First of all, the space~$S_i$ is annihilated by the
maximal ideal~$\m_i$. Thus it is a finite dimensional vector space
over the field $L_i=P/\M_i$. Letting $k_i=\dim_{L_i}(S_i)$ and recalling
that $\ell_i=\dim_K(L_i)$, we have $\dim_K(S_i)=k_i\ell_i$
for $i=1,\dots,s$. Secondly, by Theorem~\ref{charsep},
the non-zero elements of~$S_i$ are precisely the separators of~$\m_i$.

In the following case, the separator degree of a maximal ideal of~$R$
is easy to calculate. Recall that a local ring $(A,\m)$ is called a 
{\bf Gorenstein ring} if we have $\dim_{A/\m}\Soc(A)=1$.

\begin{remark}\label{sepdegprop}
Assume that $R/\q_i$ is a Gorenstein ring for some $i\in\{1,\dots,s\}$.
Then every non-zero element $f\in S_i$ generates~$S_i$.
Let $\{e_1,\dots,e_\ell\}$ be a set of elements in~$R$ whose residue classes
in~$L_i=R/\m_i$ form a $K$-basis of~$L_i$.
Then the elements in $\{e_1f,\dots,e_\ell f\}$ form a $K$-basis
of the ideal $\langle f\rangle$ in~$S_i$.
Consequently, we have $\sepdeg(\m_i)= \max\{ \ord_\Fbar(e_i f) \mid
i=1,\dots,\ell\}$.
\end{remark}

If~$R_i$ is not a Gorenstein ring, the situation
is somewhat more complicated, since the separators of~$\m_i$ generate
many different ideals in~$R$. If~$K$ is infinite, there are even
infinitely many such ideals. Nevertheless, the following
proposition allows us to characterize when the separator degree
of a maximal ideal of~$R$ is maximal.
Recall that the {\bf leading form} of a non-zero
element~$f$ of~$R$ with $\ord_\Fbar(f)=\gamma$ is defined as the
residue class $\LF_\Fbar(f) = f+ F_{\gamma-1}R$ in
$F_\gamma R/F_{\gamma-1}R$ (see~\cite{KR3}, Definition 6.5.10).

Moreover, notice that in order to construct a $K$-basis of
a socle space~$S_i$ of~$R$, we may find an $L_i$-basis
$\{s_1,\dots,s_{k_i}\}$ of~$S_i$ and a $K$-basis 
$\{e_1,\dots,e_{\ell_i}\}$ of~$L_i$. Then the set of products
$\{e_\lambda s_\kappa \mid 1\le\lambda\le \ell_i,\;
1\le \kappa\le k_i\}$ is a $K$-basis of~$S_i$
which consists of $\ell_i k_i$ elements.

\begin{proposition}{\bf (Characterization of Maximal Separator
Degrees)}\label{CharCBgeneral}\\
Let $R=P/I$ be a 0-dimensional affine $K$-algebra,
let $\m_1,\dots,\m_s$ be the maximal ideals of~$R$, and let
$i\in\{1,\dots,s\}$. Let $k_i=\dim_{L_i}(S_i)$
and $m_i=\ell_i k_i$.
Let $(e_1,\dots,e_{\ell_i})$ be a $K$-basis of~$L_i$, let
$(f_1,\dots,f_{m_i})$ be a $K$-basis of~$S_i$, and let
$M_i = (L(e_j f_k))_{j,k}$ be the matrix in
$\Mat_{\ell_i, m_i}(F_{\ri(R)}R / F_{\ri(R)-1}R)$ such that
$$
L(e_j f_k) \;=\; \begin{cases}
\LF_\Fbar(e_j f_k) & \hbox{\it if\ }\ord_\Fbar(e_j f_k)=\ri(R),\\
0 & \hbox{\it otherwise}.
\end{cases}
$$
Then we have $\sepdeg(\m_i)=\ri(R)$ if and only
if the columns of~$M_i$ are $K$-linearly independent.
\end{proposition}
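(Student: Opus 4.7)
The plan is to translate $\sepdeg(\m_i)=\ri(R)$ step by step into the column independence condition on~$M_i$.

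First, by Corollary~\ref{sepdegandord}, the equality $\sepdeg(\m_i)=\ri(R)$ holds if and only if $\ri(\bar{J})=\ri(R)$ for every minimal $\Q_i$-divisor~$J$ of~$I$. Proposition~\ref{separatoranddivisor} identifies these minimal $\Q_i$-divisors with the ideals $\bar{J}=\langle f\rangle$ in~$R$ as~$f$ runs over the separators for~$\m_i$, and by Theorem~\ref{charsep} these separators are precisely the non-zero elements of the socle space~$S_i$. So the task reduces to proving that $\ri(\langle f\rangle)=\ri(R)$ for every $0\ne f\in S_i$ if and only if the columns of~$M_i$ are $K$-linearly independent.

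Second, I would unpack the structure of $\langle f\rangle$ for a fixed non-zero $f\in S_i$. Since $\Ann_R(f)=\m_i$ by Theorem~\ref{charsep}(b), the multiplication action on~$f$ factors through $L_i=R/\m_i$, so $\langle f\rangle=L_i\cdot f$, and $\{e_1 f,\dots,e_{\ell_i} f\}$ spans~$\langle f\rangle$ over~$K$ (it is actually a basis, since $\dim_K\langle f\rangle=\ell_i$ by the separator condition). An arbitrary element $g=\sum_j a_j\,e_j f$ satisfies $\ord_\Fbar(g)=\ri(R)$ exactly when its image in the quotient $F_{\ri(R)}R/F_{\ri(R)-1}R$ is non-zero. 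Since this quotient map is $K$-linear and kills every term of order strictly less than~$\ri(R)$, the image equals $\sum_j a_j\,L(e_j f)$, where $L$ is the truncation operator from the statement. Consequently $\ri(\langle f\rangle)=\ri(R)$ if and only if the tuple $(L(e_1 f),\dots,L(e_{\ell_i} f))$ is not identically zero.

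Third, expanding $f=\sum_k c_k f_k$ in the chosen $K$-basis of~$S_i$ and using $K$-bilinearity of multiplication, combined once again with the fact that the quotient map kills lower-order contributions, yields the identity $L(e_j f)=\sum_k c_k\,L(e_j f_k)$ for every~$j$. Hence the column $(L(e_1 f),\dots,L(e_{\ell_i} f))$ is precisely the matrix-vector product $M_i\cdot(c_1,\dots,c_{m_i})$. As~$f$ ranges over $S_i\setminus\{0\}$, the coordinate vector~$c$ ranges over $K^{m_i}\setminus\{0\}$, so the condition $M_i c\ne 0$ for all such~$c$ is equivalent to $K$-linear independence of the columns of~$M_i$. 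The only delicate point is that the spanning set $\{e_j f\}$ need not be a degree-filtered basis of $\langle f\rangle$, but the convention $L(e_j f_k)=0$ whenever $\ord_\Fbar(e_j f_k)<\ri(R)$ is exactly what makes the linearity argument in $F_{\ri(R)}R/F_{\ri(R)-1}R$ valid.
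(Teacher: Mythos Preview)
Your proof is correct and follows essentially the same approach as the paper: both reduce the question to whether there exists a non-zero separator $f=\sum_k c_k f_k\in S_i$ with $\langle f\rangle\subseteq F_{\ri(R)-1}R$, and both recognize that this containment is equivalent to $M_i\cdot(c_1,\dots,c_{m_i})^{\rm tr}=0$ via the $K$-linearity of the quotient map $R\to F_{\ri(R)}R/F_{\ri(R)-1}R$. The only cosmetic difference is that the paper argues the contrapositive directly from the definition of $\sepdeg(\m_i)$, whereas you route through Corollary~\ref{sepdegandord} and Proposition~\ref{separatoranddivisor} first.
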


\begin{proof}
By definition, we have $\sepdeg(\m_i)<\ri(R)$ if and
only if there exists an ideal $\bar{J}$ in~$S_i$
such that $\dim_K(\bar{J})=\ell_i$ and
$\bar{J}\subseteq F_{\ri(R)-1}R$. Every ideal~$\bar{J}$
with $\dim_K(\bar{J})=\ell_i$ is generated by a separator
$g\in S_i$. 
Let us write $g=\sum_{k=1}^{m_i} a_{k}f_k$
with $a_{k}\in K$. The condition $\ord_\Fbar(g)<\ri(R)$
is equivalent to the condition that
$\sum_{k=1}^{m_i} a_k L(f_k) =0$ in the vector space
$F_{\ri(R)}R / F_{\ri(R)-1}R$.
Since the ideal $\bar{J}=\langle g\rangle$ is contained
in~$F_{\ri(R)-1}R$ if and only if
$\ord_\Fbar(e_j g) <\ri(R)$ for $j=1,\dots,\ell_i$, this
condition is equivalent to the condition that
$\sum_{k=1}^{m_i} a_{k} L(e_j f_k) = 0$ for $j=1,\dots,\ell_i$.
In other words, we have $\sepdeg(\m_i) <\ri(R)$ if and only
if there exists a tuple $(a_k)\in K^{m_i}\setminus  \{0\}$ 
such that $M_i\cdot (a_k)\tr =0$. This proves the claim.
\end{proof}

In view of this proposition, we have the following algorithm
for checking whether the separator degree of some maximal ideal
of~$R$ attains its maximal value $\ri(R)$.

\begin{algorithm}{\bf (Checking Maximal Separator
Degrees)}\label{alg:CheckCB}\\
Let $R=P/I$ be a 0-dimensional affine $K$-algebra,
let $\q_1,\dots,\q_s$ be the primary components of the
zero ideal in~$R$, let $\m_1,\dots,\m_s$ be the corresponding
maximal ideals of~$R$, and let $i\in\{1,\dots,s\}$.
Consider the following sequence of instructions.
\begin{enumerate}
\item[(1)] Compute a $K$-basis $(e_1,\dots,e_{\ell_i})$
of the field $L_i=R/\m_i$.

\item[(2)] Calculate the socle $\Soc(R/\q_i)$. Using the
Chinese Remainder Theorem, compute a $K$-basis 
$B_i = (f_1,\dots,f_{m_i})$ of the preimage
$$
S_i \;=\;\imath^{-1}( \{0\} \times\cdots\times \{0\}
\times \Soc(R/\q_i) \times \{0\} \times \cdots\times \{0\})
$$
under the isomorphism $\imath:\; R\cong R/\q_1 \times
\cdots\times R/\q_s$.

\item[(3)] Calculate a $K$-basis $V=(v_1,\dots,v_\Delta)$
of $F_{\ri(R)}R / F_{\ri(R)-1}R$.

\item[(4)] Form the matrix $M_i$ in
$\Mat_{\ell_i \Delta, m_i}(K)$ which is defined as follows:
For $j=1,\dots,\ell_i$, compute the column vectors in~$K^\Delta$
containing the coordinates of $L(e_jf_k)$ with respect to the
basis~$V$ and put them into the $j$-th block of rows of~$M_i$.
Here $L(e_jf_k)$ is defined as in Proposition~\ref{CharCBgeneral}.

\item[(5)] If the rank of~$M_i$ is~$m_i$,
return ${\tt TRUE}$. Otherwise, return ${\tt FALSE}$.
\end{enumerate}
This is an algorithm which checks whether the maximal ideal~$\m_i$
of~$R$ has maximal separator degree and returns the corresponding
Boolean value.
\end{algorithm}

\begin{proof}
The finiteness of this algorithm is clear. The correctness
follows from Remark~\ref{sepdegprop} and
Proposition~\ref{CharCBgeneral}.
\end{proof}

Let us apply this algorithm in a concrete case.

\begin{example}
Let $R=P/I$ be the 0-dimensional affine $K$-algebra given in
Example~\ref{twoSources}. The generating set $\{xy,y^3,x^4+x^2\}$
is also the reduced Gr\"{o}bner basis of $I$ with respect to~\texttt{DegRevLex}.
So, the residue classes of the elements in the tuple
$(1,y,x,y^2,x^2,x^3)$ form a degree filtered $K$-basis of $R$.
In particular, we have $\ri(R)=3$ and $\Delta_R=1$.
Now we want to apply Algorithm~\ref{alg:CheckCB} to check whether
the maximal ideal~$\m_i$ of~$R$ has maximal separator degree
for $i=1,2$. Note that $V=(x^3)$ represents a $K$-basis of $F_3 R/F_2 R$
in Step~(3). Moreover, we have $L_1 = R/\m_1= K\oplus Kx$
and $L_2 = R/\m_2= K$. 

First we consider the case $i=1$. We have $\ell_1=2$  and
a $K$-basis of~$L_1$ is given by $(e_1,e_2)=(1,x)$.
Moreover, a $K$-basis $B_1$ of~$S_1$ in Step~(2) is given by 
$B_1=(f_1,f_2)=(-x^2,-x^3)$, and we have $m_1=2$.
Hence we have
$$
\begin{aligned}
L(e_1f_1)&= L(-x^2) =0, &\quad
L(e_1f_2)&= L(-x^3) =-x^3,\; & \\
L(e_2f_1)&= L(-x^3) =-x^3, &\quad
L(e_2f_2)&= L(-x^4) = L(x^2) = 0, &
\end{aligned}
$$
and consequently the matrix $M_1$ in Step~(4) is
$$
M_1= \begin{pmatrix}
  0 &-1 \\
  -1& 0
\end{pmatrix}.
$$
In particular, we have $\rank (M_1)= 2 = m_1$ in Step~(5).
Therefore the maximal ideal~$\m_1$ of~$R$ has maximal separator
degree~$3$. 

Similarly, in the case $i=2$, we have $\ell_2=1$ and
a $K$-basis of~$L_2$ is given by $(e_1)=(1)$. Moreover,
a $K$-basis $B_2$ of~$S_2$ in Step~(2) is given by
$B_2=(f_1,f_2)=(x^3 + x, y^2)$, and thus $m_2=2$.
Consequently, we get
$$
L(e_1f_1)= L(x^3+x) =x^3,\quad
L(e_1f_2)= L(y^2) =0,
$$
which implies that the matrix~$M_2$ in Step~(4) is
$M_2= \begin{pmatrix}
 1 & 0
\end{pmatrix}$.
It follows that $\rank(M_2)=1 <2 =m_2$.
Hence the maximal ideal~$\m_2$ of~$R$ does not
have maximal separator degree.
In fact, we have $\sepdeg(\m_2)=2<3 = \ri(R)$.
\end{example}

Of course, by running the preceding algorithm for $i=1,\dots,s$,
we can check whether~$R$ has the Cayley-Bacharach property.
In the next section we construct another algorithm 
for this purpose which uses the
canonical module of~$R$.

\bigbreak
%
%

\section{The Canonical Module and the Cayley-Bacharach Property}\label{sec4}

In this section we continue to use the notation introduced above.
In particular, we let $R=P/I$ be a 0-dimensional affine $K$-algebra.
Versatile tools to study the ring~$R$ are its canonical module~$\omega_R$
and the affine Hilbert function of~$\omega_R$ which we recall now
(see also~\cite{KR3}, Section 4.5).

\begin{definition}
Let $R=P/I$ be a 0-dimensional affine $K$-algebra.
\begin{enumerate}
\item[(a)] If we equip the $K$-vector space
$\omega_R = \Hom_K(R,K)$ with the $R$-module structure
defined by $f\cdot \phi (g)=\phi(fg)$ for $f,g\in R$ and
$\phi\in\omega_R$, we obtain the {\bf canonical module}
of~$R$.

\item[(b)] For every $i\in\mathbb{Z}$, let
$G_i\omega_R = \{\phi\in\omega_R \mid \phi(F_{-i-1}R)=0\}$.
Then the family $\mathcal{G}= (G_i\omega_R)_{i\in\mathbb{Z}}$
is a $\mathbb{Z}$-filtration
of~$\omega_R$ which we call the {\bf degree filtration}
of~$\omega_R$.

\item[(c)] The map $\HF^a_{\omega_R}:\, \mathbb{Z} \longrightarrow
\mathbb{Z}$ defined by $\HF^a_{\omega_R}(i)= \dim_K(G_i\omega_R)$
for all $i\in\mathbb{Z}$ is called the {\bf affine Hilbert function}
of~$\omega_R$.
\end{enumerate}
\end{definition}

The following proposition collects some properties of the 
degree filtration and the affine Hilbert
function of~$\omega_R$.

\begin{proposition}\label{HFomegaprops}
Let $R=P/I$ be a 0-dimensional affine $K$-algebra.
\begin{enumerate}
\item[(a)] The degree filtration of~$\omega_R$ is increasing,
i.e., we have $G_i\omega_R \subseteq G_j\omega_R$
for $i\le j$. In particular, the affine Hilbert function of~$\omega_R$
is non-decreasing.

\item[(b)] The module $\omega_R$ is a filtered $R$-module, i.e.,
we have $F_i R \cdot G_j\omega_R \subseteq G_{i+j}\omega_R$
for all $i,j\in\mathbb{Z}$.

\item[(c)] For $i\le -\ri(R)-1$, we have $G_i\omega_R=\{0\}$,
and for $i\ge 0$, we have $G_i\omega_R=\omega_R$.
In particular, the filtration $\mathcal{G}$ is exhaustive.

\item[(d)] For every $i\in\mathbb{Z}$, we have
$$
\HF^a_{\omega_R} (i) \;=\; \dim_K(R) - \HF^a_R(-i-1)
$$
In particular, the regularity index of~$\omega_R$ satisfies
$\ri(\omega_R)=0$ and its homogeneous component
of lowest degree has dimension $\HF^a_{\omega_R}(-\ri(R))
=\Delta_R$.
\end{enumerate}
\end{proposition}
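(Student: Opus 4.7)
The plan is to prove each of the four claims by reducing them to elementary facts about the pairing $\omega_R \times R \to K$, $(\phi, f) \mapsto \phi(f)$, together with the behavior of the degree filtration on~$R$ already recalled in Section~\ref{sec2}. The key observation that drives everything is that, under this pairing, $G_i\omega_R$ is precisely the annihilator of the subspace $F_{-i-1}R$ of~$R$, so all four statements translate into statements about a decreasing sequence of subspaces $F_{-i-1}R$ as $i$ grows.

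For part (a), I would note that $i \le j$ forces $-j-1 \le -i-1$, hence $F_{-j-1}R \subseteq F_{-i-1}R$; any $\phi \in G_i\omega_R$ annihilates the larger subspace and so also the smaller one, giving $G_i\omega_R \subseteq G_j\omega_R$. The monotonicity of $\HF^a_{\omega_R}$ then follows. For part (b), I would take $f \in F_iR$, $\phi \in G_j\omega_R$ and $g \in F_{-i-j-1}R$, and observe that the multiplicative property of the degree filtration on~$P$ descends to~$R$, so $fg \in F_{-j-1}R$; hence $(f\phi)(g) = \phi(fg) = 0$, which shows $f\phi \in G_{i+j}\omega_R$.

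Part (c) would be handled by examining the two extreme ranges of~$i$. For $i \le -\ri(R)-1$ we have $-i-1 \ge \ri(R)$, so $F_{-i-1}R = R$ by definition of the regularity index, and $G_i\omega_R$ consists of functionals vanishing on all of~$R$, i.e.\ $G_i\omega_R = \{0\}$. For $i \ge 0$ we have $-i-1 \le -1$ and $F_{-i-1}R = \{0\}$, making the annihilator condition vacuous, so $G_i\omega_R = \omega_R$. Together with~(a), this yields that $\mathcal{G}$ is exhaustive.

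Part (d) would come from the dimension formula for annihilators: since $\omega_R = \Hom_K(R,K)$ is the full $K$-linear dual of the finite-dimensional space~$R$, the annihilator of any subspace $U \subseteq R$ has dimension $\dim_K R - \dim_K U$. Applying this with $U = F_{-i-1}R$ gives the claimed equality $\HF^a_{\omega_R}(i) = \dim_K(R) - \HF^a_R(-i-1)$. Plugging in $i=0$ yields $\HF^a_{\omega_R}(0) = \dim_K(R) = \dim_K(\omega_R)$, while $\HF^a_{\omega_R}(-1) = \dim_K(R) - \HF^a_R(0) = \dim_K(R) - 1$, so $\ri(\omega_R) = 0$; and plugging in $i = -\ri(R)$ gives $\HF^a_{\omega_R}(-\ri(R)) = \dim_K(R) - \HF^a_R(\ri(R)-1) = \Delta_R$ by the very definition of~$\Delta_R$. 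None of the four parts presents a real obstacle; the only care needed is keeping the index shifts straight, and in~(b) invoking the fact that the multiplicative filtration on~$P$ passes to the quotient~$R$.
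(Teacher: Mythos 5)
Your proof is correct and follows essentially the same route as the paper's: each part is reduced to the observation that $G_i\omega_R$ is the annihilator of $F_{-i-1}R$ under the evaluation pairing, with (b) using the multiplicativity of the degree filtration and (d) the codimension formula for annihilators in the full dual. Your explicit verification of the two ``in particular'' consequences in (d) is a small addition the paper leaves implicit, but the argument is the same.
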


\begin{proof}
Claim~(a) follows from the fact that $\phi(F_{-i-1}R)=0$
implies $\phi(F_{-j-1}R)=0$ for $i\le j$.
To check claim~(b), we note that for $f\in F_i R$
and $\phi\in G_j\omega_R$ we have $(f\,\phi)(F_{-i-j-1}R)
= \phi(f\, F_{-i-j-1}R)\subseteq \phi(F_{-j-1}R)=0$,
and therefore we obtain $f\,\phi\in G_{i+j}\omega_R$.

Next we prove~(c). For $i\le -\ri(R)-1$ and $\phi\in G_i\omega_R$
we have $\phi(R)=\phi(F_{\ri(R)}R) \subseteq \phi(F_{-i-1}R) =0$.
Moreover, for $i\ge 0$ we have $\phi\in G_i\omega_R$ if and only if
$\phi(F_{-i-1}R)=0$. Since $F_{-i-1}R=\{0\}$, this holds for all
$\phi\in\omega_R$.

To show~(d) we observe that the condition $\phi(F_{-i-1}R)=0$
defines a $K$-vector subspace of~$\omega_R$ of codimension
$\dim_K(F_{-i-1}R)=\HF^a_R(-i-1)$.
\end{proof}

As for the filtration~$\Fbar$ of~$R$, we can define
the order of an element of~$\omega_R$ with respect to~$\mathcal{G}$.

\begin{definition}
For $\phi\in\omega_R\setminus \{0\}$, the number
$\ord_{\mathcal{G}}(\phi)= \min\{i\in\mathbb{Z}\mid
\phi\in G_i\omega_R\}$ is well-defined. It is called the {\bf order}
of~$\phi$ with respect to~$\mathcal{G}$.
\end{definition}

For computational purposes, the following remark will come in handy.

\begin{remark}
Let $d=\dim_K(R)$, and let $B=(f_1,\dots,f_d)$ be a
degree filtered $K$-basis of~$R$. Then the dual basis
$B^\ast = (f_1^\ast,\dots,f_d^\ast)$ defined by
$f_i^\ast:\, R\longrightarrow K$ with $f_i^\ast(f_j)=\delta_{ij}$
for $i,j=1,\dots,d$ is a degree filtered $K$-basis of~$\omega_R$,
and we have $\ord_{\mathcal{G}}(f_i^\ast)=-\ord_{\mathcal{\Fbar}}(f_i)$
for $i=1,\dots,d$.
\end{remark}

Our next goal is to provide a characterization
of the Cayley-Bacharach property of~$R$ in terms of the
structure of the canonical module~$\omega_R$.
If~$R$ has linear maximal ideals, a straightforward adaptation
of~\cite{Kre2}, Theorem 2.6, achieves this goal.
In fact, our next theorem shows that the hypothesis that~$R$
has linear maximal ideals is not necessary for this
characterization to hold.

\begin{theorem}{\bf (The Canonical Module of a Cayley-Bacharach
Scheme)}\label{CanModCB}\\
Let $R=P/I$ be a 0-dimensional affine $K$-algebra.
Then the following conditions are equivalent.
\begin{enumerate}
\item[(a)] The ring~$R$ has the Cayley-Bacharach property.

\item[(b)] The bilinear map $R\otimes_K G_{-\ri(R)}\omega_R
\longrightarrow \omega_R$ is non-degenerate.

\item[(c)] We have $\Ann_R(G_{-\ri(R)}\omega_R) = \{0\}$.
\end{enumerate}
\end{theorem}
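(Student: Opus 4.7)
My plan splits the theorem into two parts: the easy equivalence (b)$\,\Leftrightarrow\,$(c) and the substantive equivalence (a)$\,\Leftrightarrow\,$(c). For the first, I observe that one half of the non-degeneracy of the bilinear map $R\otimes_K G_{-\ri(R)}\omega_R\to\omega_R$ is automatic: for any non-zero $\phi\in\omega_R$ there is some $f\in R$ with $\phi(f)\ne 0$, whence $(f\phi)(1)=\phi(f)\ne 0$ and so $f\phi\ne 0$. Thus non-degeneracy collapses to the condition $\Ann_R(G_{-\ri(R)}\omega_R)=\{0\}$, which is precisely (c).

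The bulk of the work goes into (a)$\,\Leftrightarrow\,$(c), and I would begin with a duality translation. Using the perfect pairing $R\times\omega_R\to K$, $(f,\phi)\mapsto \phi(f)$, the very definition of the filtration gives $G_{-\ri(R)}\omega_R=(F_{\ri(R)-1}R)^{\perp}$. For $r\in R$, the condition $r\cdot\phi=0$ for every $\phi\in G_{-\ri(R)}\omega_R$ unwinds to $\phi(rR)=0$ for every such $\phi$, equivalently $\langle r\rangle\subseteq (G_{-\ri(R)}\omega_R)^{\perp}=F_{\ri(R)-1}R$. Hence I obtain the clean rewording
$$
\Ann_R(G_{-\ri(R)}\omega_R) \;=\; \{\,r\in R\mid \langle r\rangle\subseteq F_{\ri(R)-1}R\,\}.
$$

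On the CBP side, Corollary~\ref{sepdegandord} tells me that (a) is equivalent to: for every $i$ and every minimal $\Q_i$-divisor $\bar{J}$ of~$I$ (viewed in~$R$), $\bar{J}\not\subseteq F_{\ri(R)-1}R$. The direction (a)$\,\Rightarrow\,$(c) is then direct: by Proposition~\ref{separatoranddivisor}, each minimal $\Q_i$-divisor is principal, generated by a separator $f$ for~$\m_i$, so if the CBP fails and some $\bar{J}=\langle f\rangle\subseteq F_{\ri(R)-1}R$ exists, then $0\ne f\in\Ann_R(G_{-\ri(R)}\omega_R)$, violating~(c).

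The main obstacle is the converse (c)$\,\Rightarrow\,$(a): given a non-zero $r$ with $\langle r\rangle\subseteq F_{\ri(R)-1}R$, I must produce inside $\langle r\rangle$ an actual minimal $\Q_i$-divisor. Here I would invoke the local decomposition $\imath\colon R\cong R/\q_1\times\cdots\times R/\q_s$, choose an index $i$ with $r_i\ne 0$, and, using the nilpotence of $\bar\m_i$ in the Artinian local ring $R/\q_i$, take the largest $k\ge 0$ with $\bar\m_i^k r_i\ne 0$. Any non-zero element $s_i\in\bar\m_i^k r_i\subseteq\langle r_i\rangle$ then lies in $\Soc(R/\q_i)$, and, writing $s_i=u_ir_i$ and lifting via $\imath^{-1}$, I obtain an element $s\in R$ with $s=ur\in\langle r\rangle$ whose image is zero outside the $i$-th factor and a non-zero socle element in the $i$-th factor. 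By Theorem~\ref{charsep} this $s$ is a separator for~$\m_i$, and Proposition~\ref{separatoranddivisor} then identifies $\langle s\rangle$ with a minimal $\Q_i$-divisor. Since $\langle s\rangle\subseteq\langle r\rangle\subseteq F_{\ri(R)-1}R$, this contradicts~(a) via the CBP characterization above and closes the argument.
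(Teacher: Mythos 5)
Your argument is correct, and at its core it is the same proof as the paper's: both reduce a hypothetical non-zero annihilator of $G_{-\ri(R)}\omega_R$ to a separator via the local decomposition and the socle of $R/\q_i$, and both exploit the fact that $G_{-\ri(R)}\omega_R$ consists exactly of the functionals vanishing on $F_{\ri(R)-1}R$. The one genuine difference is in packaging: you state the double-perpendicular identity $(G_{-\ri(R)}\omega_R)^{\perp}=F_{\ri(R)-1}R$ once and derive the clean reformulation $\Ann_R(G_{-\ri(R)}\omega_R)=\{r\in R\mid \langle r\rangle\subseteq F_{\ri(R)-1}R\}$, after which both implications become statements about ideals sitting inside $F_{\ri(R)-1}R$. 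The paper instead does this duality by hand in one direction (choosing a complement of $K\cdot\overline{fgh}$ in $F_{\ri(R)}R/F_{\ri(R)-1}R$ to build a functional $\phi$ with $\phi(fgh)\ne 0$) and implicitly in the other; your version avoids that explicit construction and is arguably tidier. Two small remarks: first, your two direction labels are swapped --- the argument you call ``(a)$\Rightarrow$(c)'' proves $\neg(a)\Rightarrow\neg(c)$, i.e.\ (c)$\Rightarrow$(a), and vice versa; since you prove both contrapositives the equivalence is still fully established, but the headers should be exchanged. Second, when you assert that \emph{every} minimal $\Q_i$-divisor is principal and generated by a separator, Proposition~\ref{separatoranddivisor} literally gives only the correspondence for ideals of the form $I+\langle F\rangle$; the missing half follows from Proposition~\ref{charsmallid}.d (any non-zero $f$ in a minimal $\q_i'/\q_i$ generates it, since $\dim_K\langle \bar f\rangle=\ell_i=\dim_K(\q_i'/\q_i)$), and it would be worth one sentence to say so.
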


\begin{proof}
First we show that (a) implies~(b). Suppose that there
exists a non-zero element $f\in R$ such that
$f\cdot G_{-\ri(R)}\omega_R = \{0\}$.
Now consider the decomposition into local rings
$\imath:\; R\cong R/\q_1\times \cdots \times R/\q_s$ and
let $\imath(f)=(f_1,\dots,f_s)$. Since $f\ne 0$, there
exists an index $i\in \{1,\dots,s\}$ such that
$f_i\ne 0$. From the fact that~$R/\q_i$ is a local ring and
Lemma 4.5.9.a of~\cite{KR3} we conclude that there exists
an element $g_i\in R/\q_i$ such that $f_ig_i\in\Soc(R/\q_i)
\setminus \{0\}$. By lifting $(0,\dots,0,g_i,0,\dots,0)$,
we therefore get an element $g\in R$
such that $fg$ is contained in the $i$-th socle space
of~$R$. Now the hypothesis that~$R$ has the CBP
implies that in the ideal $\bar{J}=\langle fg\rangle$
there exists an element~$fgh$ with $h\in R$ such that
$\ord_\Fbar(fgh)=\ri(R)$.

\smallskip
Next, we let $\overline{fgh}\ne 0$ be the image of~$fgh$ in
$V= F_{\ri(R)}R / F_{\ri(R)-1}R$. By choosing a complement of
$K\cdot \overline{fgh}$, we find a $K$-linear map $\bar{\phi}:\;
V \longrightarrow K$ such that $\bar{\phi}(\overline{fgh})\ne 0$.
Thus $\bar{\phi}$ lifts to a $K$-linear map $\phi:\; F_{\ri(R)}R
\longrightarrow K$ such that $\phi(F_{\ri(R)-1}R)=\{0\}$ and
$\phi(fgh)\ne 0$. We note that $F_{\ri(R)}R=R$ and conclude that
$\phi\in G_{-\ri(R)}\omega_R$. Hence we obtain
$0\ne \phi(fgh) = (fgh\phi)(1) =(gh\cdot (f\phi))(1)=0$, a
contradiction.

Since~(b) and~(c) are clearly equivalent, it remains to
show that~(c) implies~(a). Suppose that~$R$ does not have 
the CBP. This means that there exists
an $i\in\{1,\dots,s\}$ and a non-zero element $f\in S_i$
in the $i$-th socle space~$S_i$ of~$R$ such that $\ord_\Fbar(fg)\le
\ri(R)-1$ for every element $g\in R$.

Hence we have $\phi(fg)=0$ for every $g\in R$
and every $\phi\in G_{-\ri(R)}\omega_R$.
Consequently, we have $(f\phi)(g)=
\phi(fg)=0$ for every $g\in R$, and hence
$f\phi=0$ for every $\phi\in G_{-\ri(R)}\omega_R$.
This contradicts~(c).
\end{proof}

This theorem can be turned into an algorithm
for checking the Cayley-Bacharach property
as follows. For the notion of a block column matrix 
we refer to~\cite{KR3}, Definition 2.2.16.
We recall that, for a matrix $W \in \Mat_{r,s}(K)$,
the $K$-vector  subspace of $K^s$ given by 
$\Ker(W) = \{(c_1, \dots, c_s)\in K^s \mid W\cdot 
(c_1, \dots, c_s)\tr= 0\}$ is denoted by  $\Ker(W)$.

\begin{algorithm}{\bf (Checking the Cayley-Bacharach Property
Using $\boldsymbol{\omega}_{\mathbf{R}}$)}\label{alg:CheckCBcanmod}\\
Let $R=P/I$ be a 0-dimensional affine $K$-algebra.
Consider the following sequence of instructions.
\begin{enumerate}
\item[(1)] Compute a degree filtered $K$-basis $B=(b_1,\dots,b_d)$
of~$R$. Let $\Delta\ge 1$ be such that $b_{d-\Delta+1},\dots,
b_d$ are the elements of~$B$ of order $\ri(R)$.

\item[(2)] For $i=1,\dots,d$, compute the matrix
$M_B(\vartheta_{b_i})\in\Mat_d(K)$ representing the multiplication 
by~$b_i$ in the basis~$B$.

\item[(3)] For $j=1,\dots,\Delta$, form the 
matrix $V_j \in \Mat_d(K)$ whose $i$-th column is 
the $(d-\Delta+j)$-th column of $M_B(\vartheta_{b_i})\tr$ for 
$i=1,\dots,d$.

\item[(4)] Form the block column matrix
$W=\Col(V_1,\dots,V_\Delta)$ and compute $\Ker(W)$.

\item[(5)] If~$\Ker(W)=\{0\}$, return ${\tt TRUE}$. Otherwise,
return ${\tt FALSE}$.
\end{enumerate}
This is an algorithm which checks whether~$R$ has
the Cayley-Bacharach property and returns the corresponding
Boolean value.
\end{algorithm}

\begin{proof}
Clearly, the algorithm is finite, so that it remains to prove correctness.
According to Theorem~\ref{CanModCB}, to check the CBP
of~$R$, we have to check whether
any non-zero element of~$R$ annihilates $G_{-\ri(R)}\omega_R$.
Since the dual basis $B^\ast=(b_1^\ast,\dots,b_d^\ast)$ is a $K$-basis
of~$\omega_R$, this means that we have to check whether any
$K$-linear combination $a_1 b_1+\cdots +a_d b_d$ annihilates
$b_{d-\Delta+1}^\ast, \dots, b_d^\ast$. Using \cite{KR3}, Remark~4.5.3, 
it is easy to see that the coordinate tuple of $b_i\,b_k^\ast$
in the basis $B^\ast$ is given by the $k$-th column
of~$M_B(\vartheta_{b_i})\tr$. Hence the $(d-\Delta+j)$-th column 
of $M_B(\vartheta_{b_i})\tr$ 
is the coordinate tuple
of $b_i\,b_{d-\Delta+j}^\ast$ in the basis~$B^\ast$.
Thus the coordinate tuples of all elements of~$R$
which annihilate $b_{d-\Delta+j}^\ast$ are given by the tuples
$(a_1,\dots,a_d)\in K^d$ such that $V_j\cdot(a_1,\dots,a_d)\tr=0$.
Altogether, the vector
space~$\Ker(W)$ contains all coordinate tuples of elements of~$R$
which annihilate $G_{-\ri(R)}\omega_R$. From this the
claim follows.
\end{proof}

Let us check this algorithm in a couple of examples.

\begin{example}\label{needInfinite}
Let $K= \mathbb{F}_2$, let $P = K[x,y]$, let~$I$ be the ideal of~$P$ 
generated by $\{x^2+x,\; y^2+y, \; xy\}$, and let $R = P/I$. The primary decomposition
of~$I$ is $I = \M_1\cap \M_2\cap \M_3$, where $\M_1 = \langle x,\, y \rangle$,
$\M_2 = \langle x,\, y+1\rangle$, and $\M_3 = \langle x+1,\, y \rangle$.
The affine Hilbert function of~$R$ is $(1,3,3,\dots)$, and hence $\ri(R) = 1$.
A degree filtered basis of $R$ is given by the residue classes of $\{1,\; y,\;x\}$.
Thus we have $d=3$ and~$\Delta_R=2$. The two matrices~$V_1$ and~$V_2$ computed in 
Step~(3) of the algorithm are
$$
V_1 = \begin{pmatrix} 0 & 1 & 0\cr 1 & 1 & 0 \cr 0 & 0 & 0 \end{pmatrix} 
\hbox{\quad and\quad } V_2 = \begin{pmatrix} 0 & 0 & 1\cr 0 & 0 & 0 \cr 1 & 0 & 1 \end{pmatrix} 
$$
Since the matrix $W = \Col(V_1,V_2)$ has a trivial kernel, we conclude that~$R$ 
has the~CBP.
\end{example}

\begin{example}\label{CBDeg6}
Let $K = \mathbb{Q}$, let $P = K[x,y,z]$, let~$I$ be the ideal of~$P$ generated by 
$\{z^2 -x +2z, \; xz -2x -y +4z,\; y^2 -x +z,\;  x^2 -yz -4x -4y +8z\}$,
and let $R = P/I$. The primary decomposition
of~$I$ is $I = \M_1\cap\M_2$, where we have $\M_1 = \langle x,\; y,\; z \rangle$
and $\M_2 = \langle z^2 -x +2z,\;  xz -2x -y +4z, \; y^2 -x +z,\;   
x^2 -yz -4x -4y +8z, \; xy -2yz -z -1\rangle$. Here~$\M_1$ and~$\M_2$ are maximal ideals, 
$\M_1$ is a linear maximal ideal, and~$\M_2$ corresponds to a residue field 
extension $K\subset L_2$ of degree~5. The affine Hilbert function 
of~$R$ is $(1,4,6,6,\dots)$, and hence $\ri(R) = 2$.
A degree filtered $K$-basis of~$R$ is given by the residue classes of 
$\{1,\; z,\; y,\;x,\; yz,\; xy\}$.
Thus we have $d=6$ and $\Delta_R=2$. The two matrices~$V_1$ and~$V_2$ computed in 
Step~(3) of the algorithm are
$$
V_1 = \left(
\begin{array}{rrrrrr}
0  &  0 &  \ 0 &  0 &  1 &  0\cr 
0  & 0 &  1 &  0 & -2 &-4 \cr
 0  &  1 &  0 &  0 &  0 &  1 \cr
 0  &  0 &  0 &  1 & -4 & -8 \cr
{-1} & -2 &  0 & -4 &  1 &  1 \cr
 0  & -4  &  1 & -8 &  1 & -2
 \end{array} 
 \right)
\hbox{\quad and\quad } 
V_2 = \begin{pmatrix} 
0 & 0 & 0 & 0 & 0 &1\cr 
0 & 0 & 0 & 0 & 1 &2 \cr
0 & 0 & 0 & 1 & 0 &0 \cr
0 & 0 & 1 & 0 & 2 &4 \cr
0 & 1 & 0 & 2 & 0 &1 \cr
1 & 2 & 0 & 4 & 1 &5 
\end{pmatrix} 
$$
Since the matrix $W = \Col(V_1,V_2)$ has a trivial kernel, we conclude that~$R$ 
has the~CBP.
\end{example}

An interesting consequence of Algorithm~\ref{alg:CheckCBcanmod} 
is that the Cayley-Bacharach property of~$R$ is invariant under an
extension of base fields, as the following corollary shows.

\begin{corollary}\label{CBindependentonK}
Let $K$ be a field, let $K\subset L$ be a field extension,
and let $R$ be a 0-dimensional affine $K$-algebra.
Then the following conditions are equivalent.
\begin{enumerate}
\item[(a)] The ring $R$ has the Cayley-Bacharach property.

\item[(b)] The ring $R\otimes_K L$ has the Cayley-Bacharach property.
\end{enumerate}
\end{corollary}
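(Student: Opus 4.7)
The plan is to reduce the equivalence to the rank-invariance of a matrix under field extension, by showing that every ingredient of Algorithm~\ref{alg:CheckCBcanmod} is preserved under base change.

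First I would verify that base change is compatible with the data defining the CBP. Writing $P_L = L[x_1,\dots,x_n] = P \otimes_K L$ and $I_L = I\,P_L$, we have $R \otimes_K L \cong P_L/I_L$, which is again a 0-dimensional affine algebra. Since $L$ is a flat $K$-module, tensoring with $L$ commutes with the finite-dimensional subspaces $F_i P$ and with intersections with~$I$, so that $F_i(R \otimes_K L) = (F_i R) \otimes_K L$ for every $i \in \mathbb{Z}$. Consequently $\HF^a_{R\otimes_K L} = \HF^a_R$, and therefore $\ri(R\otimes_K L) = \ri(R)$ and $\Delta_{R\otimes_K L} = \Delta_R$. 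In particular, any degree filtered $K$-basis $B = (b_1,\dots,b_d)$ of~$R$, with $b_{d-\Delta+1},\dots,b_d$ being the elements of order $\ri(R)$, remains a degree filtered $L$-basis of $R \otimes_K L$ with the same order tuple.

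Next I would compare the matrices produced by Algorithm~\ref{alg:CheckCBcanmod} for $R$ and for $R \otimes_K L$. The multiplication matrix $M_B(\vartheta_{b_i})$ is determined by writing each product $b_i b_j$ in the basis~$B$, and the resulting structure constants lie in~$K$; hence $M_B(\vartheta_{b_i})$ is the very same matrix whether one regards~$b_i$ as acting on~$R$ over~$K$ or on~$R \otimes_K L$ over~$L$. The auxiliary matrices $V_1,\dots,V_\Delta$ and the block column matrix $W = \Col(V_1,\dots,V_\Delta)$ constructed in Steps~(3) and~(4) therefore have entries in~$K$ and are identical in both computations.

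Finally, it is a standard fact of linear algebra that for a matrix $W \in \Mat_{r,s}(K)$ viewed inside $\Mat_{r,s}(L)$, one has $\rank_K(W) = \rank_L(W)$, so $\Ker(W)$ computed over~$K$ is trivial if and only if $\Ker(W)$ computed over~$L$ is trivial. Combining this with the correctness of Algorithm~\ref{alg:CheckCBcanmod}, the condition characterizing the CBP for~$R$ (namely triviality of $\Ker(W)$ over~$K$) is equivalent to the condition characterizing the CBP for $R \otimes_K L$ (namely triviality of $\Ker(W)$ over~$L$). There is no real obstacle here; the only point needing care is the flatness argument showing that the degree filtration and hence the regularity index and the distinguished block of basis elements are unchanged under base extension.
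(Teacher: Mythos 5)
Your proposal is correct and follows essentially the same route as the paper: both arguments observe that a degree filtered $K$-basis of~$R$ remains a degree filtered $L$-basis of $R\otimes_K L$, that the multiplication matrices and hence the matrix~$W$ of Algorithm~\ref{alg:CheckCBcanmod} are unchanged under base extension, and that triviality of $\Ker(W)$ is independent of the field over which it is computed. Your additional remarks on flatness and the invariance of the affine Hilbert function merely make explicit what the paper leaves implicit.
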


\begin{proof} Given a field extension $K\subset L$
and a degree filtered $K$-basis $B=(b_1,\dots,b_d)$ of~$R$,
the tuple~$B$ is also a degree filtered $L$-basis of $R\otimes_K L$.
Moreover, the corresponding multiplication matrices $M_B(\vartheta_{b_i})$ 
do not change under this field extension. 
Hence the matrix~$W$ computed in Step~4 of Algorithm~\ref{alg:CheckCBcanmod}
does not depend on the field extension, and the claim follows.
\end{proof}

Let us show an example which illustrates this corollary.

\begin{example}\label{CBBurj}
Let $K= \mathbb{Q}$, let $P=K[x,y]$,
and let $R=P/I$ where
$I = \M_1 \cap \M_2$ with
$\M_1 = \langle x^5-x-2, y-x^3 \rangle$
and $\M_2 = \langle x,y\rangle$.
Note that~$I$ is a radical ideal.

First we use Algorithm~\ref{alg:CheckCBcanmod} to check whether~$R$ 
has the~CBP.
The reduced Gr\"{o}bner basis of the ideal~$I$ with respect to {\tt DegRevLex}
is given by $\{x^2 - y^2 + 2x,\allowbreak xy^2 - 2y^2 + 4x - y,\,
y^4 - xy - 4y^2 + 8x - 4y\}$. Consequently,
the residue classes of the elements in $B=(1, y, x, y^2, xy, y^3)$
form a degree filtered $K$-basis of~$R$.
We have $d= 6$, $\Delta_R = 1$, and the corresponding matrix~$W$ is
$$
W = \begin{pmatrix} 
0 & 0 & 0 & 0 & 0 &1\cr 
0 & 0 & 0 & 1 & 0 &0 \cr
0 & 0 & 0 & 0 & 1 &2 \cr
0 & 1 & 0 & 0 & 2 &4 \cr
0 & 0 & 1 & 2 & 0 &1 \cr
1 & 0 & 2 & 4 & 1 &6
\end{pmatrix} 
$$
Next we let~$L$ be the splitting field of the polynomial
$x^5-x-2$ over $K$.
The reduced Gr\"{o}bner basis of $I\, L[x,y]$
with respect to {\tt DegRevLex} is again
$\{x^2 -y^2 +2x,\allowbreak  xy^2 -2y^2 +4x -y,\;
y^4 -xy -4y^2 +8x -4y \}$ (see~\cite{KR1}, Lemma~2.4.16), 
and the residue classes of the elements in~$B$
form a degree filtered $L$-basis of~$R\otimes_K L$.
Clearly, the multiplication matrices $M_B(\vartheta_{b_i})\in\Mat_d(K)$
express $b_i\,b_j$ as a linear combination of $b_1,\dots, b_d$
both over~$K$ and~$L$.
Thus the matrix~$W$ agrees with the matrix constructed
by applying Algorithm~\ref{alg:CheckCBcanmod} to 
$R\otimes_K L = L[x,y]/I\,L[x,y]$.

Altogether, the ring~$R$ has the~CBP
if and only if we have $\Ker(W)=\{0\}$, and this holds if and only if
$\Ker(W)\otimes_K L=\{0\}$, i.e., if and only if
$R\otimes_K L$ has the~CBP, as expected.
In this example $R$ and $R\otimes_K L$ have the~CBP.
Notice that the scheme $\mathbb{X} = \Spec(P/I)$ consists of
two reduced points, while $\mathbb{X}_L = \Spec(L[x,y]/I\,L[x,y])$
consists of six reduced points.
\end{example}

\bigbreak
%
%

\section{Locally Gorenstein Rings and the Cayley-Bacharach Property}
\label{sec5}

As in the preceding sections, we
continue to let~$R=P/I$ be a 0-dimensional affine $K$-algebra.
The canonical module~$\omega_R$ of~$R$ can be used to characterize 
the property of~$R$ to be a locally Gorenstein ring. Notice that this
property is sometimes simply called Gorenstein, but here we
want to emphasize the distiction with the notion of strict
Gorenstein rings which will be considered later 
(see Definition~\ref{defstrictGor}).

\begin{definition}
Let $R$ be a 0-dimensional affine $K$-algebra,
and let $\q_1,\dots,\q_s$ be the primary components
of the zero ideal of~$R$. We say that~$R$ is
{\bf locally Gorenstein} if $R/\q_i$ is a Gorenstein local ring for
$i=1,\dots,s$. 
\end{definition}

Recalling the decomposition of~$R$ into local rings
$R\cong R/\q_1 \times\cdots \times R/\q_s$, we see that~$R$
is locally Gorenstein if and only if its local factors are Gorenstein.
Clearly, a field~$R$ is Gorenstein.  Every reduced
0-dimensional affine $K$-al\-ge\-bra~$R$ is a locally Gorenstein
ring, as we can see by applying the isomorphism
$R\cong R/\m_1 \times \cdots\times R/\m_s$.

The following extension of~\cite{KR3}, Theorem 4.5.21,
will allow us to check the Gorenstein property of~$R$
in a nice way.

\begin{theorem}{\bf (The Canonical Module of 
a Locally Gorenstein Ring)}\label{CharGor}\\
Let $K$ be a field, and let~$R$ be a 0-dimensional affine $K$-algebra.
Then the following conditions are equivalent.
\begin{enumerate}
\item[(a)] The ring $R$ is locally Gorenstein.

\item[(b)] The canonical module $\omega_R$ is a cyclic $R$-module, i.e.,
there exists an element $\phi\in \omega_R$ such that  $\omega_R = \langle \phi\rangle$.

\item[(c)] There exists an element $\phi\in \omega_R$ such that 
$\Ann_R(\phi) = \{0\}$.
\end{enumerate}
\end{theorem}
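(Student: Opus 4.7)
The plan is to dispose of the equivalence (b)$\Leftrightarrow$(c) by a dimension count, and then to reduce (a)$\Leftrightarrow$(b) to the local case via the decomposition $\imath: R\cong R_1\times\cdots\times R_s$ with $R_i=R/\q_i$. Since $\dim_K(\omega_R)=\dim_K(R)$, the $R$-linear map $\alpha:R\to\omega_R$, $r\mapsto r\,\phi$, is surjective precisely when it is an isomorphism, which happens precisely when $\Ann_R(\phi)=\{0\}$; this yields (b)$\Leftrightarrow$(c). For the reduction, one checks that $\imath$ induces a natural $R$-module decomposition $\omega_R\cong \omega_{R_1}\oplus\cdots\oplus\omega_{R_s}$, where $\omega_{R_i}=\Hom_K(R_i,K)$ is annihilated by $\q_j$ for $j\ne i$ and is acted on by $R_i$ in the usual way. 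Writing $\phi=(\phi_1,\dots,\phi_s)$, one sees that $\phi$ generates $\omega_R$ over $R$ if and only if each $\phi_i$ generates $\omega_{R_i}$ over $R_i$. Since the property of being locally Gorenstein is by definition a property of the local factors, it suffices to establish the local case of (a)$\Leftrightarrow$(b).

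So I would reduce to the case where $R=A$ is a $0$-dimensional local $K$-algebra with maximal ideal~$\m$ and residue field $L=A/\m$ of $K$-dimension~$\ell$. The heart of the proof is then to compute $\dim_L(\omega_A/\m\omega_A)$. For this I would use the non-degenerate $K$-bilinear pairing $A\times\omega_A\to K$, $(a,\phi)\mapsto \phi(a)$, and the orthogonal-complement operation $V\mapsto V^{\perp}$ it induces between subspaces of $A$ and $\omega_A$. The key claim is that $(\m\omega_A)^{\perp}=\Soc(A)$: indeed, $(f\psi)(a)=\psi(fa)=0$ for all $f\in\m$ and all $\psi\in\omega_A$ if and only if $fa=0$ for all $f\in\m$ (by non-degeneracy), i.e.\ if and only if $a\in\Soc(A)$. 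Taking perpendiculars gives $\m\omega_A=\Soc(A)^{\perp}$, and hence $\dim_K(\omega_A/\m\omega_A)=\dim_K\Soc(A)$, so that $\dim_L(\omega_A/\m\omega_A)=\dim_L\Soc(A)$.

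By Nakayama's lemma applied to the finitely generated $A$-module $\omega_A$, it follows that $\omega_A$ is cyclic over $A$ if and only if $\dim_L(\omega_A/\m\omega_A)=1$, which by the above is equivalent to $\dim_L\Soc(A)=1$, i.e.\ to $A$ being Gorenstein. The main obstacle I expect is the orthogonality identity $(\m\omega_A)^{\perp}=\Soc(A)$: one must be careful that the argument works over~$K$ even when the residue field~$L$ has $K$-dimension $\ell>1$, which is exactly the technical point distinguishing the present statement from Theorem~4.5.21 in~\cite{KR3}, where all maximal ideals were assumed to be linear. Keeping track of the factor~$\ell$ throughout the socle and top computations is what converts the linear-maximal-ideal argument into the general one.
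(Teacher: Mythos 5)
Your proof is correct, but for the equivalence (a)$\Leftrightarrow$(b) it takes a genuinely different route from the paper: the paper simply cites Theorem~4.5.21 of~\cite{KR3} for that equivalence and only writes out (b)$\Leftrightarrow$(c), using exactly the dimension count you give (multiplication by $\phi$ is a $K$-linear map between spaces of equal finite dimension, hence surjective iff injective). Your self-contained argument --- reduce to a local factor $A$ via $\omega_R\cong\bigoplus_i\omega_{R_i}$, identify $(\m\omega_A)^{\perp}=\Soc(A)$ under the perfect pairing $A\times\omega_A\to K$, and conclude by Nakayama --- is sound, and it has the merit of making transparent why the residue field degree $\ell=\dim_K(L)$ is harmless: both $\omega_A/\m\omega_A$ and $\Soc(A)$ are $L$-vector spaces, so the equality of their $K$-dimensions descends to $L$-dimensions, and ``cyclic'' matches ``$\dim_L\Soc(A)=1$'' on the nose. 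One small slip in the reduction step: the summand $\omega_{R_i}=\Hom_K(R_i,K)$ inside $\omega_R$ is annihilated by $\q_i$ (the $R$-action on it factors through $R/\q_i=R_i$), not by $\q_j$ for $j\ne i$; since you immediately use only that $R$ acts on $\omega_{R_i}$ through $R_i$, this does not affect the rest of the argument. Finally, the paper presents the (a)$\Leftrightarrow$(b) part of \cite{KR3}, Theorem~4.5.21 as already valid without any linearity hypothesis on the maximal ideals, so your closing remark about that restriction seems to conflate it with the Cayley--Bacharach statements; in any case your proof does not depend on the citation, which is precisely what it buys over the paper's version.
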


\begin{proof}
The equivalence between~(a) and~(b) is part of Theorem~4.5.21 of~\cite{KR3}.
To show that~(b) implies~(c), we let $\phi\in\omega_R$ be a generator
of~$\omega_R$. Then multiplication by~$\phi$ induces an $R$-module
isomorphism $R/\Ann_R(\phi) \longrightarrow \omega_R$, and from
$\dim_K(R)=\dim_K(\omega_R)$ we conclude that $\Ann_R(\phi)=\{0\}$.

Conversely, given an element $\phi\in\omega_R$ with
$\Ann_R(\phi) = \{0\}$, multiplication by~$\phi$ induces
an $R$-module isomorphism $R\longrightarrow \langle \phi\rangle$,
and again $\dim_K(R)=\dim_K(\omega_R)$ implies that
$\langle\phi\rangle = \omega_R$.
\end{proof}

Note that the equivalence between~(b) and~(c) in this theorem can be viewed as
a special case of the equivalence proved in Lemma 4.5.8 of~\cite{KR3}.
As a consequence of the theorem we obtain the following well-known
result. 

\begin{corollary}\label{GorExtends}
Let $K$ be a field, let $K\subset L$ be a field extension,
and let~$R$ be a 0-dimensional affine $K$-algebra.
Then the following conditions are equivalent.
\begin{enumerate}
\item[(a)] The ring $R$ is locally  Gorenstein.

\item[(b)] The ring $R\otimes_K L$ is locally Gorenstein.
\end{enumerate}
\end{corollary}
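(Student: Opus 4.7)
The plan is to derive this from Theorem~\ref{CharGor} by transporting the characterization (existence of an element of the canonical module with trivial annihilator) across the natural isomorphism $\omega_{R\otimes_K L}\cong \omega_R\otimes_K L$ of $R\otimes_K L$-modules. The latter comes from $\Hom_L(R\otimes_K L,L)\cong \Hom_K(R,K)\otimes_K L$, which is valid because $R$ is finite-dimensional over~$K$.

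For the direction $(a)\Rightarrow(b)$, I would take an element $\phi\in\omega_R$ with $\Ann_R(\phi)=\{0\}$ supplied by Theorem~\ref{CharGor}.c and show that $\phi\otimes 1\in\omega_R\otimes_K L$ still has trivial annihilator in $R\otimes_K L$. Fixing a $K$-basis $\{\ell_j\}$ of~$L$ and expanding an annihilating element $f=\sum_j f_j\otimes\ell_j\in R\otimes_K L$, the equation $f\cdot(\phi\otimes 1)=\sum_j(f_j\phi)\otimes\ell_j=0$ together with the $K$-linear independence of the $\ell_j$ in $\omega_R\otimes_K L$ forces each $f_j\phi=0$, hence $f_j=0$ and $f=0$. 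This is the same flat-base-change pattern used in the proof of Corollary~\ref{CBindependentonK}, and Theorem~\ref{CharGor} applied to $R\otimes_K L$ then yields~(b).

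For the converse $(b)\Rightarrow(a)$, I would argue by contrapositive and reduce to the local case via the decomposition $R=R_1\times\cdots\times R_s$ and the induced splitting $R\otimes_K L=\prod_i(R_i\otimes_K L)$, since both conditions hold componentwise. The reduced claim is: if a local $0$-dimensional $K$-algebra~$A$, with maximal ideal~$\m$ and residue field $L_A=A/\m$, is not Gorenstein, then no local factor of $B=A\otimes_K L$ is Gorenstein. By Nakayama's lemma, non-Gorenstein-ness of~$A$ amounts to $r:=\dim_{L_A}(\omega_A/\m\omega_A)\ge 2$.

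The main obstacle will be to show that every local factor~$B_j$ of~$B$, with maximal ideal~$\n_j$ and residue field~$\kappa_j$, satisfies $\dim_{\kappa_j}(\omega_{B_j}/\n_j\omega_{B_j})=r$; once this is established, $r\ge 2$ forces each~$B_j$ to be non-Gorenstein. To prove it I would compute the quotient of $\omega_B$ by the Jacobson radical of~$B$ in two stages. Since $\m\otimes_K L\subseteq B$ is nilpotent, it is contained in the Jacobson radical of~$B$, and the intermediate quotient $\omega_B/(\m\otimes_K L)\omega_B\cong (\omega_A/\m\omega_A)\otimes_K L$ is a free module of rank~$r$ over $L_A\otimes_K L$. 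Quotienting further by the nilradical of $L_A\otimes_K L$ yields $\prod_j\kappa_j^r$ on the one hand, and $\prod_j\kappa_j^{\dim_{\kappa_j}(\omega_{B_j}/\n_j\omega_{B_j})}$ on the other; equating the two gives $\dim_{\kappa_j}(\omega_{B_j}/\n_j\omega_{B_j})=r$ for every~$j$, completing the argument.
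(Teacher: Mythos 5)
Your argument is correct, but it takes a genuinely different route from the paper. The paper's proof is a one-liner: by Theorem~\ref{CharGor} the statement reduces to the assertion that $\omega_R$ is cyclic over $R$ if and only if $\omega_{R\otimes_K L}$ is cyclic over $R\otimes_K L$, and this is delegated entirely to an external result (\cite{KR3}, Theorem 3.6.4.a) on cyclic modules under base field extension. You instead give a self-contained proof. Your forward direction is a clean flat-base-change computation showing that a faithful element $\phi$ of $\omega_R$ stays faithful after tensoring, exactly in the spirit of the matrix-stability argument in Corollary~\ref{CBindependentonK}; it silently uses the identification $\omega_{R\otimes_K L}\cong\omega_R\otimes_K L$, which is fine since $\dim_K R<\infty$. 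Your converse is the more substantial contribution: the two-stage quotient $\omega_B \to \omega_B/(\m\otimes_K L)\omega_B \cong (L_A\otimes_K L)^r \to \prod_j \kappa_j^r$, compared against $\omega_B/J(B)\omega_B\cong\prod_j\omega_{B_j}/\n_j\omega_{B_j}$, proves the stronger fact that every local factor of $A\otimes_K L$ has the same minimal number of generators of its canonical module (equivalently, the same socle dimension, via the duality $\dim_{L_A}\Soc(A)=\dim_{L_A}(\omega_A/\m\omega_A)$ or via Theorem~\ref{CharGor} applied locally) as $A$ itself. What each approach buys: the paper's is short and leverages existing machinery on cyclicity under base change; yours is longer but independent of that citation and yields as a by-product the invariance of the Cohen--Macaulay type of each local factor under arbitrary base field extension, which is more information than the corollary asks for.
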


\begin{proof} By part (b) of the theorem, we have to show that 
the canonical module~$\omega_R$ is a cyclic $R$-module if and only if
$\omega_{R\otimes_K L}$ is a cyclic $R\otimes_K L$-module.
This equivalence follows from~\cite{KR3}, Theorem 3.6.4.a.
\end{proof}

By including~\cite{KR3}, Algorithm 3.1.4, we can now rewrite Algorithm~4.5.22
of~\cite{KR3} as follows.

\begin{algorithm}{\bf (Checking the Locally Gorenstein Property)}\label{alg:gor}\\
Let $R=P/I$ be a 0-dimensional affine $K$-algebra
given by a set of generators of~$I$, and let $d=\dim_K(R)$.
The following instructions define an algorithm which
checks whether~$R$ is a locally Gorenstein ring and returns the corresponding
Boolean value.
\begin{enumerate}
\item[(1)] Compute a tuple of polynomials whose residue classes
$B=(b_1,\dots,b_d)$ form a $K$-basis of~$R$.

\item[(2)] Compute the multiplication matrices $M_B(\theta_{x_1}), \dots,
M_B(\theta_{x_n})$, i.e., the matrices representing the multiplication
maps $\theta_{x_i}:\; R \longrightarrow R$ in the basis~$B$.

\item[(3)] Let $z_1,\dots,z_d$ be new indeterminates,
and let $C \in \Mat_d(K[z_1,\dots,z_d])$ be the matrix
whose columns are
$b_i \big( M_B(\theta_{x_1})\tr, \dots, M_B(\theta_{x_n})\tr \big)
\cdot (z_1,\dots, z_d)\tr$
for $i=1,\dots,d$.

\item[(4)] If $\det(C) \ne 0$ return {\tt TRUE}, otherwise
return {\tt FALSE}.
\end{enumerate}

Moreover, if  the ring $R$ is locally Gorenstein and 
$(c_1,\dots,c_d)\in K^d$ is such that $\det(C(c_1,\dots,c_d))\ne 0$,
then the linear form $c_1b_1^*+\cdots c_db_d^*$ is a generator of~$\omega_R$.
\end{algorithm}

\begin{proof}
By the Cyclicity Test (cf.~\cite{KR3}, Algorithm~3.1.4), Step~(4)
checks whether the module whose multiplication matrices
are the matrices $M_B(\theta_{x_i})\tr$ is a cyclic \hbox{$R$-module}.
By~\cite{KR3}, Remark 4.5.3, these are exactly the multiplication
matrices of the canonical module. So, the algorithm checks whether
$\omega_R$ is a cyclic $R$-module and returns the correct
answer according to Theorem~\ref{CharGor}.
The additional claim follows from~\cite{KR3},  Algorithm 3.1.4.
\end{proof}

Let us have a look at an example which illustrates this algorithm.

\begin{example}\label{GorCBNonRedNonLin}
Let $K= \mathbb{Q}$, let $P = K[x,y,z]$, and let $R=P/I$,
where $I$ is the ideal of~$P$ generated by
\[
\begin{array}{l}
\big\{\, x^{2} -18xy + 43y^{2} + 12xz -\tfrac{170}{3}yz
+ \tfrac{218}{3}z^{2} -4x + \tfrac{340}{3}y -216z + \tfrac{166}{3},\\
\ \, xy^{2} -3xy - \tfrac{4}{9}y^{2} + xz - \tfrac{32}{27}yz
-\tfrac{28}{27}z^{2}+\tfrac{64}{27}y+\tfrac{28}{9}z-\tfrac{32}{27},\\
\ \, y^{3} -\tfrac{17}{9}y^{2} + \tfrac{17}{27}yz -\tfrac{2}{27}z^{2}
-\tfrac{88}{27}y + \tfrac{20}{9}z -\tfrac{10}{27},\\
\ \, y^{2}z -\tfrac{10}{9}y^{2} -\tfrac{17}{27}yz + \tfrac{83}{27}z^{2}
+ \tfrac{34}{27}y -\tfrac{74}{9}z + \tfrac{64}{27},\\
\ \, z^{3} + \tfrac{2}{9}y^{2} -\tfrac{11}{27}yz -\tfrac{40}{27}z^{2}
+ \tfrac{22}{27}y -\tfrac{14}{9}z + \tfrac{16}{27},\\
\ \, xz^{2} - xy -\tfrac{1}{9}y^{2} -\tfrac{8}{27}yz -\tfrac{7}{27}z^{2}
+ \tfrac{16}{27}y + \tfrac{7}{9}z -\tfrac{8}{27},\\
\ \, yz^{2} + \tfrac{2}{9}y^{2} -\tfrac{38}{27}yz -\tfrac{67}{27}z^{2}
-\tfrac{32}{27}y + \tfrac{49}{9}z -\tfrac{38}{27},\\
\ \, xyz -\tfrac{1}{9}y^{2} -3xz -\tfrac{8}{27}yz -\tfrac{7}{27}z^{2}
+ x + \tfrac{16}{27}y + \tfrac{7}{9}z -\tfrac{8}{27} \,\big\}.
\end{array}
\]
Let us check whether $R$ is locally Gorenstein  or not.
Note that the given generating set is the reduced
Gr\"{o}bner basis of~$I$ with respect to \texttt{DegRevLex}.
So, a degree filtered $K$-basis $B$ of~$R$ is given by the residue
classes of the elements in the tuple
$(1,\; z,\; y,\; x,\; z^2,\; yz,\; xz,\; y^2,\; xy)$.

As the computation of the determinant of the matrix $C \in K[z_1,\dots,z_9]$
of size $9\times 9$ in Step~(4) of Algorithm~\ref{alg:gor}
is quite demanding, we substitute in~$C$ the 
indeterminates $(z_1,\dots, z_9)$ by
the numbers $\lambda =(1,-3,-1,2,4,-1,-1,1,3)$ and get
$$
C_\lambda =
\left(
\begin{array}{rrrrrrrrr}
  1\! & -3 & -1 & 2 & 4 & -1 & -1 & 1 & 3 \\
  -3\! & 4 & -1 & -1 & \frac{23}{27} & \frac{671}{27}
  & \frac{191}{27} & -\frac{1015}{27} & -\frac{25}{27} \\[3pt]
  -1\! & -1 & 1 & 3 & \frac{671}{27} & -\frac{1015}{27}
  & -\frac{25}{27} & \frac{178}{27} & \frac{710}{27} \\[3pt]
  2\! & -1 & 3 & -\frac{2719}{3} & \frac{191}{27}
  & -\frac{25}{27} & \frac{108017}{27} & \frac{710}{27}
  & -\frac{107924}{27} \\[3pt]
  4\! & \frac{23}{27} & \frac{671}{27} & \frac{191}{27}
  & \frac{257}{9} & \frac{493}{27} & -\frac{25}{27}
  & \frac{338}{27} & \frac{200}{9} \\[3pt]
  -1\! & \frac{671}{27} & \!-\frac{1015}{27} & -\frac{25}{27}
  & \frac{493}{27} & \frac{338}{27} & \frac{200}{9}
  & -\frac{2696}{27} & -\frac{266}{27} \\[3pt]
  -1\! & \frac{191}{27} & -\frac{25}{27} & \frac{108017}{27}
  & -\frac{25}{27} & \frac{200}{9} & -\frac{35938}{9}
  & -\frac{266}{27} & \frac{348632}{27} \\[3pt]
  1\! & \!-\frac{1015}{27} & \frac{178}{27} & \frac{710}{27}
  & \frac{338}{27} & -\frac{2696}{27} & -\frac{266}{27}
  & \frac{1163}{27} & \frac{1715}{27} \\[3pt]
  3\! & -\frac{25}{27} & \frac{710}{27} & -\frac{107924}{27}
  & \frac{200}{9} & -\frac{266}{27} & \frac{348632}{27}
  & \frac{1715}{27} & -\frac{143783}{9}
\end{array}
\right)
$$
Since we have 
$\det(C_\lambda)= \frac{114824810760065082500447360}{10460353203} \ne 0$,
we know that $\det(C)\ne 0$, and hence the ring~$R$ is locally Gorenstein.
\end{example}

In the remaining part of this section we construct
an algorithm for checking whether~$R$ is locally Gorenstein and has
the Cayley-Bacharach property. 
Given a field extension $K \subseteq L$, for simplicity we write
$R_L$ to denote $R\otimes_K L$.
The following theorem is the key result.

\begin{theorem}{\bf (Locally Gorenstein Rings 
Having the Cayley-Bacharach Property)}\label{CharGorCB}\\
Let~$K$ be a field, and let $R=P/I$ be a 0-dimensional
affine $K$-algebra. Then the following conditions are equivalent.
\begin{enumerate}
\item[(a)] There exists a field extension  
$K\subseteq L$ and an element $\phi\in\omega_{R_L}$
such that we have $\ord_{\mathcal{G}}(\phi)=-\ri(R_L)$ and
${\Ann_{R_L} (\phi)=\{0\}}$.

\item[(b)] The ring~$R$ is locally Gorenstein and has the Cayley-Bacharach
property.
\end{enumerate}

If~$K$ is infinite, then $L=K$ satisfies Condition~(a).
\end{theorem}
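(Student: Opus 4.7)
The plan is to prove the equivalence in two directions; the implication (a)$\Rightarrow$(b) is short and uses results already established, while (b)$\Rightarrow$(a) is the main content and requires me to exhibit a generator of $\omega_{R_L}$ lying in the deepest part $G_{-\ri(R_L)}\omega_{R_L}$ of the degree filtration, which will be the main obstacle.

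For (a)$\Rightarrow$(b), suppose $\phi\in\omega_{R_L}$ satisfies $\Ann_{R_L}(\phi)=\{0\}$. Then Theorem~\ref{CharGor} implies that $R_L$ is locally Gorenstein, and Corollary~\ref{GorExtends} transfers this property to~$R$. Since $\ord_{\mathcal{G}}(\phi)=-\ri(R_L)$ gives $\phi\in G_{-\ri(R_L)}\omega_{R_L}$, the inclusion $\Ann_{R_L}(G_{-\ri(R_L)}\omega_{R_L})\subseteq\Ann_{R_L}(\phi)=\{0\}$ combined with Theorem~\ref{CanModCB} shows that $R_L$ has the CBP, and Corollary~\ref{CBindependentonK} transfers this back to~$R$.

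For (b)$\Rightarrow$(a), I would first reduce to the case where the base field is infinite. Corollaries~\ref{GorExtends} and~\ref{CBindependentonK} preserve both hypotheses under any field extension, and the affine Hilbert functions base-change transparently so that $\ri(R_L)=\ri(R)$; hence it suffices to produce~$\phi$ over some infinite $L\supseteq K$ (taking $L=K$ when $K$ is already infinite, which yields the final assertion). Assuming $K$ is infinite, the idea is to translate the problem into~$R$ itself: by Theorem~\ref{CharGor}, fix a generator $\psi_0$ of $\omega_R$ and consider the $R$-module isomorphism $\mu\colon R\to\omega_R$, $f\mapsto f\psi_0$. Let $V=\mu^{-1}(G_{-\ri(R)}\omega_R)$, a $K$-subspace of~$R$; by Theorem~\ref{CanModCB}, the CBP yields $\Ann_R(V)=\Ann_R(G_{-\ri(R)}\omega_R)=\{0\}$. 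Since $\Ann_R(f\psi_0)=\Ann_R(f)$ and non-zerodivisors coincide with units in the zero-dimensional ring~$R$, it will suffice to find a unit of~$R$ inside~$V$; for such an $f$ the element $\phi:=f\psi_0\in G_{-\ri(R)}\omega_R\setminus\{0\}$ has trivial annihilator, and Proposition~\ref{HFomegaprops}(c) then forces $\ord_{\mathcal{G}}(\phi)=-\ri(R)$.

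The hard part is producing that unit, which I would accomplish in two steps. First, $V$ cannot be contained in any maximal ideal~$\m_i$ of~$R$: using the decomposition $R\cong R_1\times\cdots\times R_s$, if $V\subseteq\m_i$ then the $i$-th components of the elements of~$V$ lie in the maximal ideal of~$R_i$, so any non-zero element of the socle $\Soc(R_i)$ (which is non-trivial in every Artinian local ring), lifted to~$R$ via the $i$-th idempotent, annihilates all of~$V$ and contradicts $\Ann_R(V)=\{0\}$. Second, since $K$ is infinite and each $V\cap\m_i$ is now a proper $K$-subspace of~$V$, the standard fact that a finite union of proper $K$-subspaces cannot cover a vector space over an infinite field shows that $\bigcup_{i=1}^s(V\cap\m_i)$ is a proper subset of~$V$, so~$V$ contains an element outside every~$\m_i$, i.e., a unit of~$R$, finishing the construction.
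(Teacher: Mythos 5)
Your proof is correct, but both directions take a genuinely different route from the paper's. For (a)$\Rightarrow$(b) the paper argues with separators: it takes a separator $f_i$ for each $\m_i$, uses $f_i\phi\ne 0$ to produce a unit multiple $f_ig$ with $\phi(f_ig)\ne 0$, and concludes $\ord_{\Fbar}(f_ig)=\ri(R)$ so that every minimal $\Q_i$-divisor has a generator of maximal order; you instead observe that $\Ann_R(G_{-\ri(R)}\omega_R)\subseteq\Ann_R(\phi)=\{0\}$ and invoke Theorem~\ref{CanModCB} directly, which is shorter and arguably cleaner. For (b)$\Rightarrow$(a) the paper builds $\phi$ by hand: using $\dim_{L_i}\Soc(R_i)=1$ it picks the distinguished separators $f_1,\dots,f_s$ of order $\ri(R)$, chooses (via $K$ infinite) a functional on $F_{\ri(R)}R/F_{\ri(R)-1}R$ not vanishing on any $\LF_{\Fbar}(f_i)$, and then verifies $\Ann_R(\phi)=\{0\}$ by pushing an arbitrary $g\ne 0$ into the one-dimensional socle to recover $gk_i=f_i$. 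You instead fix a generator $\psi_0$ of $\omega_R$ (Theorem~\ref{CharGor}), pull $G_{-\ri(R)}\omega_R$ back to a subspace $V\subseteq R$ with $\Ann_R(V)=\{0\}$, and find a unit in $V$ by showing $V\not\subseteq\m_i$ (via a lifted socle element) and applying subspace avoidance over the infinite field. Both arguments use the infinitude of $K$ in the same ``avoid finitely many proper subspaces'' way, but yours leans more heavily on the machinery already established (Theorems~\ref{CanModCB} and~\ref{CharGor}) and avoids the explicit separator bookkeeping, at the cost of routing everything through the auxiliary generator $\psi_0$; the paper's construction has the small advantage of exhibiting $\phi$ concretely in terms of the separators. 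One tiny point worth making explicit in your write-up: $V\ne\{0\}$ because $\dim_K G_{-\ri(R)}\omega_R=\Delta_R\ge 1$ by Proposition~\ref{HFomegaprops}.d, which the covering argument tacitly requires.
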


\begin{proof} 
By Corollary~\ref{GorExtends}, we know that~$R$ is locally Gorenstein 
if and only if~$R_L$ is locally Gorenstein.
We also know by Corollary~\ref{CBindependentonK} that~$R$ 
has the CBP if and only if~$R_L$ has the~CBP. 
Together with $\ri(R) = \ri(R_L)$ this implies that 
we may assume that~$L=K$ and that this is an infinite field.
 
First we show that~(a) implies~(b). By Theorem~\ref{CharGor},
the ring~$R$ is locally Gorenstein.
Let $i\in\{1,\dots,s\}$, and let $f_i\in R$ be a separator
for~$\m_i$. Then $f_i\phi\ne 0$ implies that
there exists an element $g\in R$ such that $(f_i\phi)(g)\ne 0$.
In particular, we have $f_i g\ne 0$, so that passing to the
residue classes in $R_i=R/\q_i$ shows that $\bar{g}$ is a unit
in~$R_i$. Consequently, also $f_ig$ is a separator for~$\m_i$,
and $\phi(f_ig)\ne 0$ together with $\ord_{\mathcal{G}}(\phi)=
-\ri(R)$ yields $\ord_\Fbar(f_ig)=\ri(R)$.
This shows that, for the every minimal $\Q_i$-divisor~$J_i$ of~$I$,
there exists a separator $f_ig\in \bar{J}_i$ of order $\ri(R)$.
Thus the ring~$R$ has the~CBP.

Now we prove that~(b) implies~(a).
Since~$R$ is a locally Gorenstein ring, we have the equality 
$\dim_{L_i}(\Soc(R_i))=1$
for $i=1,\dots,s$. Hence, for every $i\in\{1,\dots,s\}$,
there exists a unique minimal $\Q_i$-divisor~$J_i$ of~$I$
whose image~$\bar{J}_i$ in~$R$ is the preimage of $\Soc(R_i)$
under the decomposition~$\imath$ of~$R$ into local rings. 
Using the fact that~$R$ has the~CBP,
we therefore find for every $i\in\{1,\dots,s\}$ a
separator $f_i\in \bar{J}_i$ for~$\m_i$ such that
$\ord_\Fbar(f_i)=\ri(R)$.

Now we let $\rho=\ri(R)$ and consider the $K$-vector
space $F_\rho R/F_{\rho-1}R$. Using the hypothesis that~$K$
is infinite and the fact that the residue classes $\bar f_1,\dots,\bar f_s$
are all non-zero, we find a $K$-linear map $\bar\phi:\;
F_\rho R/ F_{\rho-1}R \longrightarrow K$ such that $\bar\phi(\bar f_i)\ne 0$
for $i=1,\dots,s$. By composing~$\bar\phi$ with the canonical epimorphism,
we therefore get a $K$-linear map $\phi:\; F_\rho R \longrightarrow K$
such that $\phi(f_i)\ne 0$ for $i=1,\dots,s$. Clearly, the map~$\phi$
is an element of~$G_{-\rho}\omega_R$, and hence $\ord_{\mathcal{G}}(\phi)=
-\ri(R)$.

It remains to prove that we have $\Ann_R(\phi)=\{0\}$.
Assume that $g\cdot \phi=0$ for some element
$g\in R\setminus \{0\}$. Let $\imath(g)=(g_1,\dots,g_s)$
with $g_i\in R_i$ for $i=1,\dots,s$. Since $R_i$ is a local Gorenstein ring,
there exists an element $h_i\in R_i$ such that we have $h_i g_i\in\Soc(R_i)$
(see for instance~\cite{KR3}, Lemma 4.5.9.a).
Furthermore, as we know $\dim_{L_i}(\Soc(R_i))=1$, we find a unit
$u_i\in R_i$ such that $g_ih_i=u_i \tilde{f}_i$ for the residue class 
$\tilde{f}_i =f_i+\q_i \in R_i$.
By defining $k_i\in R$ to be the preimage of 
$(0,\dots,0,h_iu_i^{-1},0,\dots,0)$ under~$\imath$, we then 
get $g k_i=f_i$. This implies
$0=g\phi(k_i) = \phi(g k_i)=\phi(f_i)\ne 0$, a contradiction.
\end{proof}

The existence of an element $\phi\in G_{-\ri(R)}\omega_R$
such that $\Ann_R(\phi)=\{0\}$
has the following effect on the affine Hilbert 
function of~$R$.

\begin{corollary}\label{HFinequal}
Let $K$ be a field, and let~$R$ be a 0-dimensional 
affine $K$-algebra. If there exists an element 
$\phi\in G_{-\ri(R)}\omega_R$ such that $\Ann_R(\phi)=\{0\}$, 
then we have
$$
\HF^a_R(i) + \HF^a_R(\ri(R)-1-i) \;\le\; \dim_K(R)
$$
for $i=0,\dots,\ri(R)-1$.

In particular, these inequalities hold if~$K$ is infinite
and~$R$ is a locally Gorenstein ring with the Cayley-Bacharach property.
\end{corollary}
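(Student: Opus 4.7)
The plan is to use the hypothesis $\Ann_R(\phi) = \{0\}$ to manufacture an injective $K$-linear map out of $F_iR$ whose target has controllable dimension, and then read off the inequality.

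First, fix $i \in \{0,\dots,\ri(R)-1\}$ and consider the multiplication map
$$
\mu_i:\; F_iR \;\longrightarrow\; \omega_R, \qquad f \;\longmapsto\; f\,\phi.
$$
Since $\mu_i(f) = 0$ means $f \in \Ann_R(\phi)$, the hypothesis forces $\mu_i$ to be injective. Next I would pinpoint where its image sits. By Proposition~\ref{HFomegaprops}(b), multiplication on $\omega_R$ is filtered, so for $f \in F_iR$ and $\phi \in G_{-\ri(R)}\omega_R$ we have $f\,\phi \in G_{i-\ri(R)}\omega_R$. Hence $\mu_i$ factors as an injection $F_iR \hookrightarrow G_{i-\ri(R)}\omega_R$.

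Comparing dimensions and using Proposition~\ref{HFomegaprops}(d), which says $\HF^a_{\omega_R}(j) = \dim_K(R) - \HF^a_R(-j-1)$, I obtain
$$
\HF^a_R(i) \;=\; \dim_K(F_iR) \;\le\; \dim_K\bigl(G_{i-\ri(R)}\omega_R\bigr) \;=\; \dim_K(R) - \HF^a_R(\ri(R)-1-i),
$$
which rearranges to the asserted inequality. Running this for every $i \in \{0,\dots,\ri(R)-1\}$ finishes the first statement.

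For the "in particular" clause, the point is simply to produce such a $\phi$: when $K$ is infinite and $R$ is locally Gorenstein with the CBP, Theorem~\ref{CharGorCB} (applied with $L = K$) supplies an element $\phi \in \omega_R$ with $\ord_{\mathcal{G}}(\phi) = -\ri(R)$ and $\Ann_R(\phi) = \{0\}$; in particular $\phi \in G_{-\ri(R)}\omega_R$, so the first part applies. There is no real obstacle here: the only subtle point worth stating carefully is that $\mu_i$ actually lands in $G_{i-\ri(R)}\omega_R$, which is exactly Proposition~\ref{HFomegaprops}(b); everything else is dimension bookkeeping via Proposition~\ref{HFomegaprops}(d).
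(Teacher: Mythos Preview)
Your proof is correct and follows essentially the same approach as the paper's: both use injectivity of multiplication by~$\phi$ to get $\dim_K(F_iR\cdot\phi)=\HF^a_R(i)$, the filtered module property (Proposition~\ref{HFomegaprops}(b)) to place the image inside $G_{i-\ri(R)}\omega_R$, and Proposition~\ref{HFomegaprops}(d) to read off the inequality. Your treatment of the ``in particular'' clause via Theorem~\ref{CharGorCB} with $L=K$ is also exactly what the paper intends.
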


\begin{proof}
By the hypothesis, we have
$\dim_K (F_iR \cdot \phi)= \dim_K(F_iR) = \HF^a_R(i)$ 
for all $i\in\mathbb{Z}$.
Thus the claim follows from $(F_iR)\cdot\phi \subseteq 
G_{-\ri(R)+i}\omega_R$ and Proposition~\ref{HFomegaprops}.d.
\end{proof}

The preceding theorem allows us to generalize Algorithm~4.6.21 in~\cite{KR3}
by dropping the hypothesis that the maximal ideals of~$R$ are linear.
To emphasize the analogy with Algorithm~\ref{alg:gor},
we first rewrite~\cite{KR3}, Lemma~4.6.20 as follows.

\begin{lemma}\label{usingmatrices}
Let $R=P/I$ be a 0-dimensional affine $K$-algebra, let $\phi\in\omega_R$,
and let $B=(b_1,\dots,b_d)$ be a $K$-basis of~$R$. 
We write $\phi=c_1 b_1^\ast + \cdots
+c_d b_d^\ast$ with $c_1,\dots,c_d\in K$.
\begin{enumerate}
\item[(a)] For $g\in R$, we have $g\,\phi=0$ in~$\omega_R$
if and only if we have the equality
$M_B(\theta_g)\tr \cdot (c_1,\dots,c_d)\tr  =0$.

\item[(b)] Let $\Lambda_c\in \Mat_d(K)$ be the matrix
whose $i$-th column is given by the product
$M_B(\theta_{b_i})\tr \cdot(c_1,\dots,c_d)\tr$
for $i\in\{1,\dots,d\}$.
Then we have $\Ann_R(\phi)=\{ 0\}$
if and only if $\det(\Lambda_c)\ne 0$.
\end{enumerate}
\end{lemma}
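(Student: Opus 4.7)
The plan is to unwind the $R$-module structure on $\omega_R = \Hom_K(R,K)$ in the dual basis $B^\ast = (b_1^\ast,\dots,b_d^\ast)$ and to translate each assertion into a purely linear-algebra statement about coordinate vectors. The key convention to keep in mind is that $M_B(\theta_g)$ is the matrix of multiplication by~$g$ on column coordinates in the basis~$B$; since $\omega_R$ is the $K$-dual, the induced action on coordinates with respect to~$B^\ast$ will be given by the transpose.

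For part~(a), I would write $\phi = \sum_{i=1}^d c_i b_i^\ast$ and $g\cdot\phi = \sum_{j=1}^d d_j b_j^\ast$, so that the coefficients $d_j$ are exactly $(g\cdot\phi)(b_j)$. By the definition of the module structure, $(g\cdot\phi)(b_j) = \phi(g b_j)$, and writing $M_B(\theta_g) = (m_{ij})$ with the usual convention that the $j$-th column gives the coordinates of $gb_j$ in~$B$, I obtain
$$
d_j \;=\; \phi(gb_j) \;=\; \sum_{i=1}^d m_{ij}\,\phi(b_i) \;=\; \sum_{i=1}^d m_{ij}\,c_i \;=\; \bigl(M_B(\theta_g)\tr \cdot (c_1,\dots,c_d)\tr\bigr)_j.
$$
Thus $g\cdot\phi = 0$ in~$\omega_R$ is equivalent to the vanishing of every $d_j$, which is exactly the stated matrix equation.

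For part~(b), the aim is to rephrase $\Ann_R(\phi) = \{0\}$ as a rank condition on $\Lambda_c$. Writing an arbitrary $g \in R$ as $g = \sum_{i=1}^d a_i b_i$ and using the $K$-linearity of the assignment $h \mapsto M_B(\theta_h)$ together with part~(a), the condition $g\cdot\phi = 0$ becomes
$$
\sum_{i=1}^d a_i\, M_B(\theta_{b_i})\tr \cdot (c_1,\dots,c_d)\tr \;=\; \Lambda_c \cdot (a_1,\dots,a_d)\tr \;=\; 0.
$$
Consequently, the coordinate tuples of elements of $\Ann_R(\phi)$ are precisely the vectors in $\Ker(\Lambda_c)$. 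Hence $\Ann_R(\phi) = \{0\}$ holds if and only if $\Lambda_c$ has trivial kernel, and since $\Lambda_c$ is a square $d \times d$ matrix over the field~$K$, this is equivalent to $\det(\Lambda_c) \ne 0$.

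I do not anticipate any genuine obstacle: both statements reduce to careful book-keeping of coordinates. The only slightly delicate point is the appearance of the transposes, which is forced by the fact that the multiplication-by-$g$ matrix acts naturally on $B$-coordinates of~$R$, while $\phi$ and $g\cdot\phi$ are expressed in the \emph{dual} basis $B^\ast$, so the induced action on these dual coordinates is by $M_B(\theta_g)\tr$.
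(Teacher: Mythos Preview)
Your argument is correct. Both parts are handled cleanly: in~(a) you correctly identify $(g\phi)(b_j)=\phi(gb_j)$ as the $j$-th entry of $M_B(\theta_g)\tr\cdot(c_1,\dots,c_d)\tr$, and in~(b) the linearity $M_B(\theta_g)=\sum_i a_i M_B(\theta_{b_i})$ yields exactly the kernel description you state.

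The paper's proof is different only in that it does not carry out this computation at all: it simply observes that the matrices appearing here are the transposes of those in an already-proved result (\cite{KR3}, Lemma~4.6.20), and refers the reader there. Your approach has the advantage of being entirely self-contained and of making transparent \emph{why} the transpose appears (namely, because one is passing to the dual basis~$B^\ast$); the paper's approach has the advantage of brevity and of situating the lemma as a minor variant of a known fact. Mathematically there is no real difference---the computation you wrote out is essentially what lies behind the cited lemma.
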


\begin{proof}
It suffices to note that the matrices in~(a) and~(b)
are the transposes of the corresponding matrices 
in~\cite{KR3}, Lemma~4.6.20.
\end{proof}

Now we are ready to formulate the desired algorithm.

\begin{algorithm}{\bf (Checking Locally Gorenstein Rings 
with the Cayley-Bacharach Property)}\label{alg:GorCB}\\
Let $K$ be a field, and let
$R=P/I$ be a 0-dimensional affine $K$-algebra.
Consider the following sequence of instructions.
\begin{enumerate}
\item[(1)] Compute a degree filtered $K$-basis $B=(b_1,\dots, b_d)$
of the ring~$R$. Let $\Delta\ge 1$ be such that
$b_{d-\Delta+1},b_{d-\Delta+2},\dots, b_d$
are those elements in~$B$ whose order is $\ri(R)$.

\item[(2)] Compute the multiplication matrices $M_B(\theta_{x_1}), \dots,
M_B(\theta_{x_n})$.

\item[(3)] Let $z_{d-\Delta+1},\dots, z_d$ be indeterminates. Let
$C_0 \in \Mat_d(K[z_{d-\Delta+1},\dots,z_d])$ be the matrix
such that, for $i=1,\dots,d$, its $i$-th column is given by
$b_i \big( M_B(\theta_{x_1})\tr, \dots, M_B(\theta_{x_n})\tr \big)
\cdot  (0,\dots,0,z_{d-\Delta+1},\dots,z_d)\tr$.

\item[(4)] If $\det(C_0)\ne 0$, return ${\tt TRUE}$. Otherwise,
return ${\tt FALSE}$.
\end{enumerate}
This is an algorithm which checks whether~$R$ is a locally Gorenstein 
ring which has the Cayley-Bacharach property and returns the 
corresponding Boolean value.
\end{algorithm}

\begin{proof}
Notice that the matrix $C_0$ in this algorithm is obtained from the
matrix $C$ of Algorithm~\ref{alg:gor} by replacing $z_1, \dots,
z_{d-\Delta}$ with~0. Hence the condition ${\det(C_0)\ne 0}$
implies $\det(C)\ne 0$, and thus~$R$ is a 
locally Gorenstein ring in this case.
To prove that~$R$ has the~CBP, we may assume
by Corollary~\ref{CBindependentonK} that the field~$K$ is infinite.
By Theorem~\ref{CharGorCB}, we have to show that there exists
an element $\phi\in\omega_R$ of order $-\ri(R)$ such that 
$\Ann_R(\phi)=\{0\}$. Since the elements $b^\ast_{d-\Delta+1},\dots,
b^\ast_d$ form a $K$-basis of $G_{-\ri(R)}\omega_R$,
the lemma shows that we have to find a non-zero
tuple $(a_{d-\Delta+1},\dots,a_d)\in K^\Delta$
such that $C_0(a_{d-\Delta+1},\dots,a_d)\ne 0$.
When $\det(C_0)\ne 0$ and~$K$ is infinite, this is
clearly possible.

To conclude the proof, we show that if $\det(C_0)=0$ and~$R$ is 
a locally Gorenstein ring, 
then~$R$ does not have the~CBP.
We observe that the basis~$B$, the matrices of~$\theta_{x_1}, \dots, \theta_{x_n}$,
and hence the matrix~$C_0$ do not change if we extend the base field. 
Therefore we may assume that~$K$ is infinite.
Now the equality $\det(C_0) = 0$  implies that there is no linear form 
$\phi \in \omega_R$ such that 
$\ord_{\mathcal{G}}(\phi)= -\ri(R)$ and $\Ann_R(\phi) = \{0\}$. Thus
the claim follows from Theorem~\ref{CharGorCB}.
\end{proof}

Using Example~\ref{needInfinite}, we can see that there are cases where
a non-trivial field extension is required to satisfy condition~(a) 
of Theorem~\ref{CharGorCB}.

\begin{example}\label{needInfinite-continued}
In Example~\ref{needInfinite} we have
$$
C_0 = \begin{pmatrix}
0 & z_2 & z_3\\
z_2 & z_2 & 0\\
z_3 & 0 & z_3
\end{pmatrix}
$$
and $\det(C_0) = z_2z_3(z_2+z_3)$. This shows that, notwithstanding the fact that 
$\det(C_0)\ne0$, all pairs in $\mathbb{F}_2^2$ are zeros of~$\det(C_0)$.
Therefore the ring~$R$ is locally Gorenstein, has the~CBP, 
satisfies $\ri(R) = 1$, but every element $\phi\in \omega_R$ such that 
$\ord_{\mathcal{G}}(\phi)=-\ri(R) = -1$ has a non-trivial annihilator.

Now we let $L= \mathbb F_2[a]/\langle a^2+a+1\rangle$, and let~$\bar{a}$
be the residue class of~$a$ in~$L$. Then $(1+\bar{a}, 1)$ is not a zero 
of~$\det(C_0)$. Hence, for the element $\phi = (1+\bar{a})x^* + y^* \in \omega_{R_L}$, 
we have $\Ann_{R_L}(\phi) = \{0\}$.
\end{example}

Next we present a classical example of a reduced scheme
which does not have the Cayley-Bacharach property and check it
using Algorithm~\ref{alg:GorCB}.

\begin{example}\label{NonCBstandard}
Let $K= \mathbb{Q}$, let $P=K[x,y]$, let~$I$ be the ideal
$I = \langle xy,\; {y^2 -y}, \allowbreak{x^3 -x} \rangle$
in~$P$, and let $R=P/I$.
The primary decomposition of~$I$ is given by
$I = \M_1\cap \M_2\cap \M_3 \cap \M_4$, where
$\M_1 = \langle x+1,\; y\rangle$,
$\M_2 =\langle x,\; y \rangle$,
$\M_3 =\langle x,\; y-1\rangle$,
and $\M_4 =\langle x-1,\; y\rangle$.
Since~$R$ is reduced, it is also a locally Gorenstein ring.

Now we check whether~$R$ has the~CBP.
The set $\{xy, \allowbreak{y^2 -y},\allowbreak x^3 -x\}$ is the reduced
Gr\"{o}bner basis of $I$ with respect to {\tt DegRevLex}.
So, a degree filtered $K$-basis $B$ of~$R$ is given by
the residue classes of the elements in $(1,\; y,\; x, \; x^2)$.
In particular, we have $d=4$ and $\Delta_R=1$.
The corresponding matrix~$C_0$~is
$$ C_0 =
\begin{pmatrix}
0 & 0 & 0 & z_4 \\
  0 & 0 & 0 & 0 \\
  0 & 0 & z_4 & 0 \\
  z_4 & 0 & 0 & z_4
\end{pmatrix}
$$
and its determinant is $\det(C_0) = 0$.
Thus we conclude that~$R$ is a locally Gorenstein ring which
does not have the~CBP.
\end{example}

In the last part of this section we consider the case $\Delta_R=1$
more thoroughly. The following remark provides an important connection
between Algorithms~\ref{alg:CheckCBcanmod} and~\ref{alg:GorCB}.

\begin{remark}\label{C0islincomb}
Let $C_0$ be the matrix  computed in Step~(3) of Algorithm~\ref{alg:GorCB}, 
and let $V_1,\dots,V_\Delta$  be the matrices computed in Step~(3) 
of Algorithm~\ref{alg:CheckCBcanmod}. Then the construction of these matrices
implies that we have $C_0 = z_{d-\Delta+1}V_1 + \cdots + z_d V_\Delta$.
\end{remark}

Now Algorithms~\ref{alg:CheckCBcanmod} and~\ref{alg:GorCB} yield the 
following characterization of the CBP for rings whose last difference is one.

\begin{corollary}\label{deltaqual1}
In the setting of Algorithm~\ref{alg:GorCB},
assume that $\Delta_R =1$. Then the following conditions are equivalent.
\begin{enumerate}
\item[(a)] The ring $R$ has the Cayley-Bacharach property.

\item[(b)] The ring $R$ is locally Gorenstein and its canonical module is generated by an 
element~$\phi$ such that $\ord_{\mathcal{G}}(\phi) = -\ri(R)$.
\end{enumerate}
\end{corollary}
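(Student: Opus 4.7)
The plan is to combine the dimension formula for $\omega_R$ from Proposition \ref{HFomegaprops}.d with the characterizations of the CBP and the locally Gorenstein property already established.

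First I would unpack the hypothesis $\Delta_R = 1$. By Proposition~\ref{HFomegaprops}.d, we have $\HF^a_{\omega_R}(-\ri(R)) = \Delta_R = 1$, so $G_{-\ri(R)}\omega_R$ is a one-dimensional $K$-vector space. Pick any non-zero $\phi \in G_{-\ri(R)}\omega_R$; then $\ord_{\mathcal{G}}(\phi) = -\ri(R)$ and $G_{-\ri(R)}\omega_R = K\,\phi$.

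For the implication (a)$\Rightarrow$(b), I would apply Theorem~\ref{CanModCB}: the CBP of $R$ is equivalent to $\Ann_R(G_{-\ri(R)}\omega_R) = \{0\}$. Combined with the equality $G_{-\ri(R)}\omega_R = K\,\phi$ from the first step, this gives $\Ann_R(\phi) = \{0\}$. Theorem~\ref{CharGor} then yields simultaneously that $R$ is locally Gorenstein and that $\omega_R = \langle \phi \rangle$, which is exactly statement~(b).

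For the converse (b)$\Rightarrow$(a), suppose $\omega_R = \langle \phi \rangle$ with $\ord_{\mathcal{G}}(\phi) = -\ri(R)$. Then multiplication by $\phi$ gives a surjection $R \twoheadrightarrow \omega_R$ whose kernel is $\Ann_R(\phi)$; since $\dim_K R = \dim_K \omega_R$, this forces $\Ann_R(\phi) = \{0\}$. Applying Theorem~\ref{CharGorCB} with the trivial extension $L = K$, we conclude that $R$ has the Cayley-Bacharach property.

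There is essentially no obstacle here: the whole argument reduces to observing that the hypothesis $\Delta_R = 1$ collapses the space $G_{-\ri(R)}\omega_R$ to a single line, so the element $\phi$ whose existence is granted by Theorem~\ref{CharGorCB} must automatically generate $\omega_R$ and, conversely, any generator of $\omega_R$ of minimal order lies in that one-dimensional space and annihilates no non-zero element of $R$. The only thing worth double-checking is that no appeal to a base field extension is needed, which is guaranteed because the generator $\phi$ is produced directly inside $\omega_R$ rather than inside $\omega_{R_L}$.
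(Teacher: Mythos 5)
Your proof is correct, but it follows a genuinely different route from the paper's. The paper argues entirely through the matrix machinery of its algorithms: it observes via Remark~\ref{C0islincomb} that $\Delta_R=1$ forces $C_0 = z_d V_1$, deduces $\det(C_0) = z_d^d\det(V_1)$, and then shuttles between the non-vanishing of $\det(V_1)$ (Algorithm~\ref{alg:CheckCBcanmod}), $\det(C_0)$ (Algorithm~\ref{alg:GorCB}) and $\det(C)$ (Algorithm~\ref{alg:gor}) to conclude, identifying the generator concretely as $\phi=b_d^\ast$. You instead argue module-theoretically: the key observation that $\HF^a_{\omega_R}(-\ri(R))=\Delta_R=1$ collapses $G_{-\ri(R)}\omega_R$ to the line $K\phi$, so that $\Ann_R(G_{-\ri(R)}\omega_R)=\Ann_R(\phi)$, and then Theorem~\ref{CanModCB} and Theorem~\ref{CharGor} give (a)$\Rightarrow$(b) while Theorem~\ref{CharGorCB} with $L=K$ gives the converse. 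Each step checks out: any non-zero element of $G_{-\ri(R)}\omega_R$ has order exactly $-\ri(R)$ because $G_{-\ri(R)-1}\omega_R=\{0\}$ by Proposition~\ref{HFomegaprops}.c, and the dimension count $\dim_K R=\dim_K\omega_R$ correctly upgrades $\Ann_R(\phi)=\{0\}$ to $\omega_R=\langle\phi\rangle$. Your remark that no base field extension is needed is also sound, since the generator is produced inside $\omega_R$ itself and the trivial extension satisfies condition~(a) of Theorem~\ref{CharGorCB}. What the paper's route buys is the explicit link between the three algorithms and a concrete generator in coordinates; what your route buys is brevity, independence from the matrix bookkeeping, and a transparent explanation of why $\Delta_R=1$ is exactly the hypothesis that makes the statement work. (For (b)$\Rightarrow$(a) you could even bypass Theorem~\ref{CharGorCB} entirely and quote Theorem~\ref{CanModCB} directly, since $\phi$ spans $G_{-\ri(R)}\omega_R$.)
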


\begin{proof}
First we observe that if $\Delta_R = 1$ then there is only one matrix~$V_1$ 
in Step~(3) of Algorithm~\ref{alg:CheckCBcanmod}. Consequently, 
Remark~\ref{C0islincomb} says that we have $C_0 = z_d V_1$
in Algorithm~\ref{alg:GorCB}.

Now let us prove that~(a) implies~(b).
Algorithm~\ref{alg:CheckCBcanmod} shows that we have 
$\det(V_1) \ne 0$, and therefore $\det(C_0) \ne 0$. Then the
matrix~$C$ in Step~(3) of Algorithm~\ref{alg:gor} satisfies
$\det(C) \ne 0$ as well, since we already observed in 
the proof of Algorithm~\ref{alg:GorCB} that~$C_0$  is obtained 
from~$C$ by replacing $z_1,\dots,z_{d-1}$ by~$0$. 
Thus Algorithm~\ref{alg:gor} shows that~$R$ is locally Gorenstein. 
Moreover, we note that we have $\det(C_0) = z_d^d\det(V_1)\ne 0$. This implies
$\det(C_0)(1)\ne 0$, and therefore $\det(C)(0,\dots,0,1)\ne 0$. 
By Algorithm~\ref{alg:gor},
it follows that $\phi=b_d^*$ is a generator of~$\omega_R$ such 
that $\ord_{\mathcal{G}}(\phi)=-\ri(R)$.

Finally, we prove that~(b) implies~(a). The assumption implies that 
$\det(C_0) \ne 0$. Thus the claim follows from Algorithm~\ref{alg:GorCB}.
\end{proof}

The following example demonstrates that in Condition~(b) of this corollary 
the assumption $\ord_{\mathcal{G}}(\phi) = -\ri(R)$ is essential, 
even if~$R$ is a local ring.

\begin{example}\label{MaxOrdEssential}
Let $K=\mathbb{Q}$, let $P=K[x,y]$, let~$I$ be the ideal
$I = \langle  x^2,\; xy,\: y^3-x \rangle$ in~$P$, and let $R=P/I$.
Clearly, the ideal~$I$ is a primary ideal 
whose radical is $\langle x,\; y\rangle$.
The affine Hilbert function of~$R$ is $(1,3,4,4,\dots)$. Hence we have
$\ri(R)=2$ and $\Delta_R=1$.
A degree filtered $K$-basis~$B$ of~$R$ is given by the residue
classes of the elements in the tuple $(1,\; y,\; x,\; y^2)$.
In this case we have 
$$
C = \begin{pmatrix}
z_1 & z_2 & z_3 & z_4\\
z_2 & z_4 & 0 & z_3\\
z_3 & 0 & 0 & 0\cr
z_4 & z_3 &0 &0
\end{pmatrix}
$$
in Algorithm~\ref{alg:gor}, and thus $\det(C) = z_3^4$.
To get an element~$\phi$ in~$\omega_R$ of the form 
$\phi = c_1+c_2y^*+c_3x^*+c_4(xy)^*$
which generates this module, we therefore need $c_3\ne 0$.
But then $\phi(x)\ne 0$ implies $\phi(F_1R) \ne 0$,
and hence $\ord_\mathcal{G}(\phi )\ge -1 > -\ri(R)$.
In conclusion, the ring~$R$ is an example of a 0-dimensional
local affine $K$-algebra which is locally Gorenstein and satisfies 
$\Delta_R=1$, but does not have the~CBP.

Notice that in this example we have 
${\rm \id}_R^* = x\cdot x^*$, $y^* = y^2\cdot x^*$, and
$(y^2)^* = y\cdot x^*$. Therefore the element~$x^*$ generates~$\omega_R$.
On the other hand, we have $x\cdot (y^2)^* = 0$. Hence the element
$(y^2)^*$ of order~$-2$ has a non-trivial annihilator. By Condition~(c) 
of Theorem~\ref{CharGor}, this shows 
that~$(y^2)^*$ does not generate~$\omega_R$.
\end{example}

A small modification of the above example shows that there are 
0-dimensional Gorenstein local rings~$R$ 
with $\Delta_R > 1$. In other words, even if~$R$ is a local
Gorenstein ring, the homogeneous component of highest degree
of $\grFR$ need not be a 1-dimensional $K$-vector space.

\begin{example}\label{GorwithDeltabig}
Let $K=\mathbb{Q}$, let $P=K[x,y]$, let~$I$ be the ideal
$I = \langle  x^2,\; xy,\: y^2-x \rangle$ in~$P$, and let $R=P/I$.
Clearly, the ideal~$I$ is a primary ideal whose radical is 
$\langle x,\; y\rangle$. The affine Hilbert function of~$R$ is $(1,3,3,\dots)$.
Hence we have $\ri(R) = 1$ and $\Delta_R=2$.
A degree filtered $K$-basis~$B$ of~$R$ is given by the residue
classes of the elements in the tuple $(1,\; y,\; x)$.
In this case we have 
$$
C = \begin{pmatrix}
z_1 & z_2 & z_3 \\
z_2 & z_3 & 0 \\
z_3 & 0 & 0 
\end{pmatrix}
$$
in Algorithm~\ref{alg:gor}, and thus $\det(C) = -z_3^3$.
Consequently, the element $\phi=x^\ast$ generates~$\omega_R$.
Thus~$R$ is a 0-dimensional local affine $K$-algebra
which is a Gorenstein ring and satisfies $\Delta_R=2$.
\end{example}

\bigbreak
%
%

\section{Strict Gorenstein Rings and the Cayley-Bacharach Property}
\label{sec6}

In this section we let $R=P/I$ be a 0-dimensional affine 
$K$-algebra as above, and we let $\mathcal{F}=(F_i R)_{i\in\mathbb{Z}}$
be the degree filtration of~$R$. Recall that the graded ring of~$R$ satisfies
$$
\grFR \;=\; {\textstyle\bigoplus\limits_{i\in\mathbb{Z}}}
\; F_i R / F_{i-1}R \;\cong\; P / \DF(I)
$$
where $\DF(I)$ is the degree form ideal of~$I$ (see~\cite{KR2}, Example~6.5.11).
Consequently, in the following we identify the elements 
of $\grFR$with the corresponding elements in~$P/\DF(I)$.

\begin{remark}\label{hilbandAffhilb}
From the definition it follows that  $\HF^a_R(i) = \HF^a_{\grFR}(i)$ and 
$\HF_{\grFR}(i) = \Delta \HF^a_R(i)$ for all $i\in\mathbb{Z}$.
\end{remark}

The degree filtration and the graded ring are related to an embedding of
the scheme $\mathbb{X} =\Spec(P/I)$ into the projective $n$-space as follows.

\begin{remark}
Recall also that the homogenization of~$R$
is $R^\hom=\overline{P}/I^{\hom}$ where $\overline{P}=K[x_0,x_1,\dots,x_n]$
and where~$I^{\hom}$ is the homogenization of~$I$ with respect to~$x_0$.
Geometrically, the ring $R^\hom$ is the homogeneous coordinate
ring of the 0-dimensional scheme obtained by embedding
$\mathbb{X}=\Spec(P/I) \subset \mathbb{A}^n$
into projective $n$-space via $\mathbb{A}^n \cong D_+(x_0)
\subset \mathbb{P}^n$. The~CBP of~$R$
(or of~$\mathbb{X}$) can be reformulated as a property 
of~$R^\hom$ in a straightforward way. Since we focus on affine
schemes for the reasons explained in the introduction, we leave
this task to the interested reader.

In this setting, we have $\grFR \cong 
R^\hom/\langle x_0\rangle$ and $R\cong R^\hom/\langle x_0-1\rangle$,
where both~$x_0$ and $x_0-1$ are non-zerodivisors of~$R^\hom$.
Since both $R^{\hom}$ and $\grFR$
are standard graded $K$-algebras and since $x_0\in R^{\hom}$ is a homogeneous
non-zerodivisor, the ring $R^{\hom}$ is a Gorenstein ring if and only
if $\grFR$ is a Gorenstein ring.
On the other hand, the non-zerodivisor $x_0-1\in R^{\hom}$ is not
homogeneous. Thus the condition that $R^{\hom}$ is a Gorenstein ring implies
that $R \cong R^{\hom} / \langle x_0-1\rangle$ is locally Gorenstein,
but the converse is not true in general.
\end{remark}

In view of this remark, we introduce the following definition
(see also~\cite{KK}, Definition~3.2).

\begin{definition}\label{defstrictGor}
A 0-dimensional affine $K$-algebra $R=P/I$ is called a
{\bf strict Gorenstein ring} if $\grFR$
is a local Gorenstein ring. 
\end{definition}

By the preceding remark, strict Gorenstein rings are locally
Gorenstein, but the converse in not true in general, 
as Example~\ref{GorwithDeltabig} shows.
In~\cite{DGO} and~\cite{Kre1}, the property of~$R$
to be a strict Gorenstein ring was characterized in some special cases
by the~CBP and the symmetry of its Hilbert function. This
last property is defined as follows.

\begin{definition}\label{SymmHF}
The affine Hilbert function~$\HF^a_R$ of~$R$ is called {\bf symmetric} 
if and only if the Castelnuovo function $\Delta\HF^a_R$ satisfies
$\Delta\HF^a_R(\ri(R)-i)=\Delta\HF^a_R(i)$ for all $i\in\mathbb{Z}$.
\end{definition}

Alternatively, we can express the symmetry of~$\HF^a_R$ as follows.

\begin{proposition}\label{SymmHFChar}
For a 0-dimensional affine $K$-algebra~$R$, the following
conditions are equivalent.
\begin{enumerate}
\item[(a)] The affine Hilbert function of~$R$ is symmetric.

\item[(b)] For all $i\in\mathbb{Z}$, we
have $\dim_K(R)-\HF^a_R(\ri(R)-i) = \HF^a_R(i-1)$.

\end{enumerate}
\end{proposition}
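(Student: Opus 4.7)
The plan is a direct telescoping argument, using only the definition of the Castelnuovo function together with the boundary values $\HF^a_R(-1)=0$ and $\HF^a_R(j) = \dim_K(R)$ for all $j \ge \ri(R)$. There is no real obstacle: the equivalence is essentially a bookkeeping exercise, and the only point that needs a moment of care is checking that the range $i \in \mathbb{Z}$ causes no trouble at the boundary. Set $r = \ri(R)$ and $d = \dim_K(R)$ throughout.

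For the implication (a) $\Rightarrow$ (b), I would fix $i \in \mathbb{Z}$ and split the telescoping sum $d = \sum_{j=0}^{r} \Delta\HF^a_R(j)$ at $j = r-i$, obtaining
\[
d - \HF^a_R(r-i) \;=\; \sum_{j=r-i+1}^{r} \Delta\HF^a_R(j).
\]
Re-indexing via $k = r - j$ turns this sum into $\sum_{k=0}^{i-1} \Delta\HF^a_R(r-k)$, which by the symmetry assumption equals $\sum_{k=0}^{i-1} \Delta\HF^a_R(k) = \HF^a_R(i-1)$. This is exactly the identity in (b).

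For the converse (b) $\Rightarrow$ (a), I would apply the hypothesis at both $i$ and $i+1$ to obtain $\HF^a_R(r-i) = d - \HF^a_R(i-1)$ and $\HF^a_R(r-i-1) = d - \HF^a_R(i)$, and then subtract:
\[
\Delta\HF^a_R(r-i) \;=\; \HF^a_R(r-i) - \HF^a_R(r-i-1) \;=\; \HF^a_R(i) - \HF^a_R(i-1) \;=\; \Delta\HF^a_R(i).
\]

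Finally, I would note that the quantifier over all $i \in \mathbb{Z}$ is harmless: for $i < 0$ or $i > r+1$, both sides of (b) are identically $0$ or identically $d$ by the boundary behavior of $\HF^a_R$, and the corresponding symmetry relation in (a) is trivial since both sides vanish. Thus the two conditions agree termwise on the entire integer line.
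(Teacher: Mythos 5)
Your proof is correct and follows essentially the same route as the paper: the forward direction splits the telescoping sum $\dim_K(R)=\sum_{j=0}^{\ri(R)}\Delta\HF^a_R(j)$ at $j=\ri(R)-i$ and reindexes using symmetry, and the converse applies the identity in (b) at $i$ and $i+1$ and subtracts. Your added check of the boundary cases for $i$ outside $\{0,\dots,\ri(R)+1\}$ is a harmless refinement that the paper leaves implicit.
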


\begin{proof}
To prove that~(a) implies~(b), we let $i\in\mathbb{Z}$
and calculate
$$
\dim_K(R) - \HF^a_R(\ri(R)-i) \;=
   {\textstyle\sum\limits_{j=\ri(R)-i+1}^{\ri(R)}} \Delta \HF^a_R(j)
   \;=\; {\textstyle\sum\limits_{j=0}^{i-1}} \Delta\HF^a_R(j) 
   \;=\; \HF^a_R(i-1) 
$$
Conversely, we let $i\in\mathbb{Z}$ and conclude from
\begin{align*}
\Delta \HF^a_R(\ri(R)-i) &\;=\; \HF^a_R(\ri(R)-i) - \HF^a_R(\ri(R)-i-1)\\
&\;=\; - \HF^a_R(i-1) + \HF^a_R(i) \;=\; \Delta \HF^a_R(i)
\end{align*}
that $\HF^a_R$ is symmetric.
\end{proof}

Notice that Condition~(b) is equivalent to the fact that
all inequalities in Corollary~\ref{HFinequal} are equalities.

\smallskip
In the sequel we identify the graded module of~$\omega_R$
with respect to the filtration~$\mathcal{G}$ with the 
canonical module of~$\grFR$ via the next lemma.
Recall that the leading form of an element 
$\phi\in\omega_R\setminus \{0\}$ of order~$\gamma$ is 
defined by $\LF_{\mathcal{G}}(\phi)= \phi+ G_{\gamma-1}\omega_R$.

\begin{lemma}\label{IsomOfgr}
The map $\Psi:\; \gr_{\mathcal{G}}\omega_R \longrightarrow
\omega_{\grFR} \cong \Hom_K(\grFR,K)$ defined by
$$
\Psi(\LF_{\mathcal{G}}(\phi))= \left( \LF_{\mathcal{F}}(f)\mapsto \begin{cases}
\phi(f) & \hbox{\it if } \ord_{\mathcal{F}}(f)=-\ord_{\mathcal{G}}(\phi), \\
0 & \hbox{\it otherwise,}
\end{cases} \right)
$$
is an isomorphism of $\grFR$-modules.
\end{lemma}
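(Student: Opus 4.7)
The plan is to establish in turn that $\Psi$ is well-defined, that it is graded and $\grFR$-linear, and that it is bijective.

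First, I would check well-definedness. For a nonzero $\phi\in G_\gamma\omega_R$ with $\ord_{\mathcal{G}}(\phi)=\gamma$, the prescribed formula for $\Psi(\LF_{\mathcal{G}}(\phi))$ is declared to be zero on every homogeneous component of $\grFR$ except $F_{-\gamma}R/F_{-\gamma-1}R$, where it sends $\LF_{\mathcal{F}}(f)$ to $\phi(f)$. This factors through the quotient since $\phi(F_{-\gamma-1}R)=0$ by the defining property of $G_\gamma\omega_R$. Independence of the chosen representative is equally clear: replacing $\phi$ by $\phi+\psi$ with $\psi\in G_{\gamma-1}\omega_R$ only adds $\psi(f)$, which vanishes for $f\in F_{-\gamma}R$. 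Viewing $\omega_{\grFR}$ as the graded dual $\bigoplus_\gamma \Hom_K(F_{-\gamma}R/F_{-\gamma-1}R,K)$, this shows $\Psi$ is a well-defined $K$-linear, grading-preserving map from $\gr_{\mathcal{G}}\omega_R$ to $\omega_{\grFR}$.

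Next, I would verify that $\Psi$ is a $\grFR$-module homomorphism. Take homogeneous $\LF_{\mathcal{F}}(g)\in \grFR$ of degree $\alpha$ and $\LF_{\mathcal{G}}(\phi)\in \gr_{\mathcal{G}}\omega_R$ of degree $\gamma$, and evaluate both $\Psi(\LF_{\mathcal{F}}(g)\cdot \LF_{\mathcal{G}}(\phi))$ and $\LF_{\mathcal{F}}(g)\cdot \Psi(\LF_{\mathcal{G}}(\phi))$ on an arbitrary homogeneous $\LF_{\mathcal{F}}(f)\in \grFR$ of degree $i$. Both sides vanish unless $i=-\alpha-\gamma$, in which case both reduce to $\phi(gf)$ via the defining relation $(g\phi)(f)=\phi(gf)$ for the action of $R$ on $\omega_R$. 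The delicate point here is that $\LF_{\mathcal{F}}(g)\cdot \LF_{\mathcal{F}}(f)$ may vanish in $\grFR$ even when $gf\ne 0$, and that $g\phi$ may have order strictly smaller than $\alpha+\gamma$; in each such degenerate case the relevant value $\phi(gf)$ equals zero because $\phi$ vanishes on $F_{-\gamma-1}R$, so the two sides continue to agree.

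Finally, I would deduce bijectivity. Injectivity is immediate: if $\Psi(\LF_{\mathcal{G}}(\phi))=0$, then $\phi$ vanishes on $F_{-\gamma}R$, so $\phi\in G_{\gamma-1}\omega_R$ and $\LF_{\mathcal{G}}(\phi)=0$. For surjectivity one can either lift any homogeneous functional on $F_{-\gamma}R/F_{-\gamma-1}R$ to a map on $F_{-\gamma}R$ and then extend arbitrarily to $R$, producing a preimage in $G_\gamma\omega_R$; or argue via dimensions, noting that by Remark~\ref{hilbandAffhilb} the spaces $\gr_{\mathcal{G}}\omega_R$ and $\omega_{\grFR}=\Hom_K(\grFR,K)$ both have $K$-dimension equal to $\dim_K(R)$, so the injection must be an isomorphism. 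The main obstacle is the case analysis in the module-homomorphism step, where one must carefully rule out mismatches coming from non-generic drops of order under multiplication; all other verifications are direct unravellings of the definitions of the filtrations $\mathcal{F}$ and $\mathcal{G}$.
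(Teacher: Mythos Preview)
Your argument is correct: each of the three steps (well-definedness, $\grFR$-linearity, bijectivity) is handled properly, including the degenerate cases in the module-homomorphism verification where orders drop under multiplication.

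By contrast, the paper's own proof consists of a single sentence citing a general result from the reference~\cite{NO} (Lemma~I.6.4 on graded duals of filtered modules), with no details given. Your approach is therefore genuinely different in style: you give a direct, elementary, self-contained verification tailored to the specific situation at hand, whereas the paper appeals to an abstract lemma covering a broader class of filtered modules. The advantage of your route is that it keeps the paper self-contained and makes the interaction between the filtrations~$\mathcal{F}$ and~$\mathcal{G}$ explicit; the advantage of the paper's citation is brevity and the signal that nothing special about this particular~$R$ is being used.
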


\begin{proof}
This is a consequence of a more general result given
in~\cite{NO}, Lemma I.6.4.
\end{proof}

To prove the theorem below, we need a further 
auxiliary result.

\begin{lemma}\label{AnnIngr}
Let $\phi\in\omega_R\setminus \{0\}$ be an element such that
$\Ann_{\grFR}(\LF_{\mathcal{G}}(\phi))=\{0\}$.
Then we have $\Ann_R(\phi)=\{0\}$.
\end{lemma}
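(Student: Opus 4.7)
The plan is to prove the contrapositive in a direct contradiction style: assume there exists a non-zero $f \in R$ with $f\phi = 0$, and use the associated graded structure to derive a non-zero annihilator of $\LF_{\mathcal{G}}(\phi)$ in $\grFR$.

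First I would set up notation: let $\gamma = \ord_{\mathcal{G}}(\phi)$ and, assuming for contradiction that $f \in \Ann_R(\phi) \setminus \{0\}$, let $d = \ord_{\mathcal{F}}(f)$. By Proposition~\ref{HFomegaprops}(b), the module $\omega_R$ is filtered, so $f\cdot \phi \in G_{d+\gamma}\omega_R$. The key observation is that the module structure on the associated graded module behaves well with respect to leading forms: by the standard construction of $\gr_{\mathcal{G}}\omega_R$ as a $\grFR$-module, the product $\LF_{\mathcal{F}}(f)\cdot \LF_{\mathcal{G}}(\phi)$ in $\gr_{\mathcal{G}}\omega_R$ is represented by $f\phi$ modulo $G_{d+\gamma-1}\omega_R$. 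Since $f\phi = 0$, this product vanishes.

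Next I would transport this vanishing across the isomorphism $\Psi$ of Lemma~\ref{IsomOfgr}. Because $\Psi$ is an isomorphism of $\grFR$-modules, the identification $\LF_{\mathcal{G}}(\phi) \leftrightarrow \Psi(\LF_{\mathcal{G}}(\phi))$ preserves annihilators, so $\LF_{\mathcal{F}}(f)\cdot \Psi(\LF_{\mathcal{G}}(\phi)) = 0$ in $\omega_{\grFR}$. Thus $\LF_{\mathcal{F}}(f)$ lies in $\Ann_{\grFR}(\LF_{\mathcal{G}}(\phi))$, and by hypothesis this annihilator is zero. Hence $\LF_{\mathcal{F}}(f) = 0$ in $\grFR$, which contradicts the fact that $f \in F_d R \setminus F_{d-1}R$ by the definition of $d = \ord_{\mathcal{F}}(f)$.

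The substantive step is verifying that the action on the associated graded module satisfies $\LF_{\mathcal{F}}(f) \cdot \LF_{\mathcal{G}}(\phi) = \LF_{\mathcal{G}}(f\phi)$ whenever $\ord_{\mathcal{G}}(f\phi) = d+\gamma$, and equals $0$ whenever $f\phi \in G_{d+\gamma-1}\omega_R$; in our case $f\phi = 0$ falls into the second alternative. This is essentially the standard formalism of filtered modules and their associated graded (as in~\cite{NO}), so no hard calculation is needed — the only thing to be careful about is checking that the statement $\Ann_{\grFR}(\LF_{\mathcal{G}}(\phi))=\{0\}$ in the hypothesis is interpreted consistently under~$\Psi$, which is automatic since $\Psi$ is a $\grFR$-module isomorphism. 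I do not expect a serious obstacle; the entire argument is a short filtered-to-graded transfer.
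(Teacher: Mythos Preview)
Your proof is correct and follows essentially the same contrapositive argument as the paper: assume a non-zero $f$ with $f\phi=0$, observe that this forces $\LF_{\mathcal{F}}(f)\cdot\LF_{\mathcal{G}}(\phi)=0$ in the associated graded, and contradict the hypothesis $\Ann_{\grFR}(\LF_{\mathcal{G}}(\phi))=\{0\}$ since $\LF_{\mathcal{F}}(f)\ne 0$. The paper's version is slightly terser and phrases the contradiction the other way (deducing $\LF_{\mathcal{G}}(f\phi)\ne 0$ from the hypothesis rather than $\LF_{\mathcal{F}}(f)=0$), but the content is identical.
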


\begin{proof}
For a contradiction, assume that there exists an element
$f\in R\setminus\{0\}$ such that $f\,\phi=0$.
Then we have $\LF_{\mathcal{F}}(f)\ne 0$ and
$\LF_{\mathcal{G}}(\phi)\ne 0$. By the hypothesis, this
implies $\LF_{\mathcal{F}}(f)\,\LF_{\mathcal{G}}(\phi)\ne 0$. 
Hence we have $\LF_{\mathcal{G}}(f\phi)=
\LF_{\mathcal{F}}(f)\,\LF_{\mathcal{G}}(\phi)\ne 0$, a contradiction.
\end{proof}

Now we are ready to characterize 0-dimensional strict Gorenstein rings
as follows.

\begin{theorem}{\bf (First Characterization of Strict Gorenstein 
Rings)}\label{CharSGor1}\\
Let $R=P/I$ be a 0-dimensional affine $K$-algebra.
Then the following conditions are equivalent.
\begin{enumerate}
\item[(a)] The ring~$R$ is a strict Gorenstein ring. 

\item[(b)] The ring~$R$ has the~CBP and a symmetric affine Hilbert function. 
\end{enumerate}
If these conditions are satisfied, then~$R$ is locally Gorenstein.
\end{theorem}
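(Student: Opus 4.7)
The plan is to exploit the isomorphism $\Psi\colon \gr_{\mathcal{G}}\omega_R \to \omega_{\grFR}$ of Lemma~\ref{IsomOfgr} together with the characterization of locally Gorenstein rings via cyclicity of the canonical module (Theorem~\ref{CharGor}, applied both to $R$ and to $\grFR$) and the characterization of the CBP through annihilators (Theorem~\ref{CanModCB}).

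For (a)$\Rightarrow$(b), suppose $\grFR$ is local Gorenstein. Theorem~\ref{CharGor} then gives that $\omega_{\grFR}$ is cyclic with a generator $\bar\phi$, and since by Proposition~\ref{HFomegaprops}(c) $\omega_{\grFR}$ has graded components only in degrees $-\ri(R),\dots,0$, the generator must sit in degree $-\ri(R)$. The surjection $\grFR \to \omega_{\grFR}$, $a \mapsto a\bar\phi$, becomes an isomorphism by a $K$-dimension count; the resulting identification of graded pieces forces $\dim_K (\grFR)_{\ri(R)-j} = \dim_K (\grFR)_j$ for all $j$, i.e., by Remark~\ref{hilbandAffhilb} the Castelnuovo function of $R$ is symmetric. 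To obtain the CBP (and the locally Gorenstein property of $R$), lift $\bar\phi$ via $\Psi^{-1}$ to some $\phi \in \omega_R$ with $\ord_{\mathcal{G}}(\phi) = -\ri(R)$; Lemma~\ref{AnnIngr} yields $\Ann_R(\phi) = \{0\}$, and so Theorem~\ref{CharGorCB}, applied with $L = K$, delivers both conclusions.

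For (b)$\Rightarrow$(a), the first move is to extract from the symmetry the numerical identity $\Delta_R = \Delta\HF^a_R(\ri(R)) = \Delta\HF^a_R(0) = 1$. Hence $G_{-\ri(R)}\omega_R$ is one-dimensional, spanned by some $\phi$, and Theorem~\ref{CanModCB} combined with the CBP gives $\Ann_R(\phi) = \{0\}$; by dimension $\omega_R = R\phi$, so $R$ is locally Gorenstein. The substantive step is then to prove that multiplication by $\phi$ preserves order, i.e., $\ord_{\mathcal{G}}(f\phi) = \ord_{\mathcal{F}}(f) - \ri(R)$ for every non-zero $f \in R$. This follows from a dimension count: $F_j R \cdot \phi \subseteq G_{j-\ri(R)}\omega_R$, while $\Ann_R(\phi) = \{0\}$ gives $\dim_K (F_j R \cdot \phi) = \HF^a_R(j)$, and Propositions~\ref{HFomegaprops}(d) and~\ref{SymmHFChar}(b) together force $\dim_K G_{j-\ri(R)}\omega_R = \HF^a_R(j)$ as well, so the two spaces coincide. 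Passing to the associated graded pieces, order preservation yields $\LF_{\mathcal{F}}(f)\cdot \LF_{\mathcal{G}}(\phi) \ne 0$ whenever $\LF_{\mathcal{F}}(f) \ne 0$, i.e., $\Ann_{\grFR}(\LF_{\mathcal{G}}(\phi)) = \{0\}$; a second application of Theorem~\ref{CharGor} shows that $\grFR$ is (locally and, being local, simply) Gorenstein.

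The main obstacle is the order-preservation claim in (b)$\Rightarrow$(a). The CBP alone gives $\Ann_R(\phi) = \{0\}$, but this is not by itself enough to guarantee that $\LF_{\mathcal{G}}(\phi)$ has trivial annihilator in $\grFR$, as Example~\ref{MaxOrdEssential} already illustrates; it is precisely the symmetry hypothesis that promotes each graded piece of the multiplication map $F_j R / F_{j-1}R \to G_{j-\ri(R)}\omega_R / G_{j-\ri(R)-1}\omega_R$ to an isomorphism and excludes the pathology that $f\phi$ drops below the expected filtration step. The closing assertion that strict Gorenstein rings are locally Gorenstein is automatic in both directions: from Theorem~\ref{CharGorCB} in (a)$\Rightarrow$(b), and from $\omega_R = R\phi$ in (b)$\Rightarrow$(a).
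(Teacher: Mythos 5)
Your proposal is correct and follows essentially the same route as the paper: for (a)$\Rightarrow$(b) it uses the graded freeness of $\omega_{\grFR}$ with generator in degree $-\ri(R)$, the lift through Lemma~\ref{IsomOfgr}, Lemma~\ref{AnnIngr}, and a dual dimension count for the symmetry; for (b)$\Rightarrow$(a) it derives $\Delta_R=1$, produces the generator $\phi$ of order $-\ri(R)$, and proves $F_jR\cdot\phi = G_{j-\ri(R)}\omega_R$ to conclude that $\LF_{\mathcal{G}}(\phi)$ has trivial annihilator, which is exactly the paper's argument with $V_i$. The only cosmetic deviation is invoking Theorem~\ref{CharGorCB} with $L=K$ instead of Corollary~\ref{deltaqual1} to harvest the CBP in the forward direction.
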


\begin{proof}
First we show that~(a) implies~(b). By the hypothesis, the
module $\omega_{\grFR}$ is a graded free $\grFR$-module
of rank one with a basis element~$\psi$ in degree $-\ri(R)$, i.e.,
we have $\omega_{\grFR}\cong \grFR(\ri(R))$.
Using the identification of Lemma~\ref{IsomOfgr}, we 
obtain an element $\phi\in\omega_R$ with $\ord_{\mathcal{G}}(\phi)=
-\ri(R)$ such that $\LF_{\mathcal{G}}(\phi)=\psi$.
Then Lemma~\ref{AnnIngr} yields $\Ann_R(\phi)=\{0\}$,
and therefore~$\phi$ is an $R$-basis of~$\omega_R$.
By Theorem~\ref{CharGor}, the ring~$R$ is locally Gorenstein.

Furthermore, using $\omega_{\grFR}\cong \grFR(\ri(R))$
and Remark~\ref{hilbandAffhilb}, we get the equalities 
$\HF^a_{\omega_{\grFR}}(-i)=
\HF^a_{\grFR}(\ri(R)-i) = \HF^a_R(\ri(R)-i)$ for all $i\in\mathbb{Z}$.
Now an application of Proposition~\ref{HFomegaprops}.d to
$\grFR$ yields 
$$
\HF^a_{\omega_{\grFR}}(-i) \;=\; \dim_K(\grFR) - 
\HF^a_{\grFR}(i-1) \;=\; \dim_K(R) - \HF^a_R(i-1)
$$
for all $i\in\mathbb{Z}$. Altogether, we get
$\HF^a_R(\ri(R)-i) = \dim_K(R) - \HF^a_R(i-1)$, and therefore
$\HF^a_R$ is symmetric by Proposition~\ref{SymmHFChar}. 
In particular, we 
have $\Delta_R=1$ and Corollary~\ref{deltaqual1} shows that~$R$
has the~CBP.

Now we show that~(b) implies~(a). By the symmetry of~$\HF^a_R$,
we have $\Delta_R=1$. Using Corollary~\ref{deltaqual1},
we find an element $\phi\in G_{-\ri(R)}\omega_R$
which is an $R$-basis of~$\omega_R$. 
Let~$B$ be a degree filtered $K$-basis
of~$R$. For every $i\ge 0$, we let $B_{\le i}$ be the subtuple
of~$B$ consisting of its elements of order $\le i$.
Since $\Ann_R(\phi)=\{0\}$, the elements in $B_{\le i}\cdot\phi$
are $K$-linearly independent. They generate a $K$-vector subspace~$V_i$
of dimension $\HF^a_R(i)$ of~$G_{-\ri(R)+i}\omega_R$. Using
Proposition~\ref{HFomegaprops}.d and the symmetry of~$\HF^a_R$,
we get $\dim_K G_{-\ri(R)+i}\omega_R = \dim_K(R) - \HF^a_R(\ri(R)-i-1) = 
\HF^a_R(i)$. Hence we have $V_i=G_{-\ri(R)+i}\omega_R$ for every $i\ge 0$.

In order to prove that $\grFR$ is a Gorenstein ring,
it suffices to show that the element $\LF_{\mathcal{G}}(\phi)$
is a basis of~$\omega_{\grFR}$. For this purpose we need to prove
that its annihilator is zero. For a contradiction, assume
that there exists an element $f\in R\setminus \{0\}$
such that $\LF_{\mathcal{F}}(f)\LF_{\mathcal{G}}(\phi)=0$.
Letting $i=\ord_{\mathcal{F}}(f)$, we obtain
$\ord_{\mathcal{G}}(f\phi)\le -\ri(R)+i-1$. Hence the
element $f\phi \in G_{-\ri(R)+i-1}\omega_R = V_{i-1}$ 
is a linear combination of the elements in
$B_{\le i-1}\cdot\phi$. Now the condition $\Ann_R(\phi)=\{0\}$
implies $f\in \langle B_{\le i-1}\rangle_K$, and therefore
$\ord_{\mathcal{F}}(f)\le i-1$, a contradiction.
\end{proof}

Of course, since the graded ring $\grFR$ is a 0-dimensional
affine $K$-algebra, too, we can examine whether it has the~CBP
or not. For this purpose, we introduce the following notion.

\begin{definition}
The 0-dimensional affine $K$-algebra $R=P/I$ is said to have
the {\bf strict Cayley-Bacharach property} if its graded
ring $\grFR$ has the Cayley-Bacharach property. 
\end{definition}

Our next proposition provides a justification for this terminology.

\begin{proposition}\label{SCBPimpliesCBP}
Let $R=P/I$ be a 0-dimensional affine $K$-algebra.
If $R$ has the strict Cayley-Bacharach property, then~$R$
has the Cayley-Bacharach property.
\end{proposition}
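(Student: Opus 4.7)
The plan is to apply Theorem~\ref{CanModCB} at both levels and to transport the annihilator condition through the graded isomorphism of Lemma~\ref{IsomOfgr}. By Remark~\ref{hilbandAffhilb} we have $\HFa_R = \HFa_{\grFR}$, so $\ri(R) = \ri(\grFR)$; set $\rho = \ri(R)$. The strict CBP of~$R$ is by definition the CBP of~$\grFR$, which by Theorem~\ref{CanModCB} is equivalent to $\Ann_{\grFR}(G_{-\rho}\omega_{\grFR}) = \{0\}$; the goal is to deduce the analogous statement $\Ann_R(G_{-\rho}\omega_R) = \{0\}$, once more via Theorem~\ref{CanModCB}.

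The key identification is that the lowest-order pieces of the two canonical modules correspond under the isomorphism~$\Psi$ of Lemma~\ref{IsomOfgr}. Proposition~\ref{HFomegaprops}.c yields $G_{-\rho-1}\omega_R = \{0\}$, so $G_{-\rho}\omega_R = G_{-\rho}\omega_R / G_{-\rho-1}\omega_R$ is exactly the homogeneous component of degree $-\rho$ in $\gr_{\mathcal{G}}\omega_R$. Applying the same proposition to the graded ring $\grFR$ (whose nonzero homogeneous parts live in degrees $0,\dots,\rho$), one sees similarly that $G_{-\rho}\omega_{\grFR}$ is the homogeneous component of degree $-\rho$ of $\omega_{\grFR}$, which is $\Hom_K((\grFR)_{\rho}, K)$. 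Since~$\Psi$ is a $\grFR$-linear isomorphism that is graded (its very definition keeps track of the orders $\ord_{\mathcal{F}}(f) = -\ord_{\mathcal{G}}(\phi)$), it maps the first of these components onto the second.

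For the main step, assume for a contradiction that there is an element $f \in R \setminus \{0\}$ with $f\cdot G_{-\rho}\omega_R = \{0\}$, and put $i = \ord_{\mathcal{F}}(f)$, so that $\LF_{\mathcal{F}}(f)$ is a nonzero homogeneous element of degree~$i$ in~$\grFR$. For every $\phi \in G_{-\rho}\omega_R$ we have $\ord_{\mathcal{G}}(\phi) = -\rho$, and by construction of the graded module structure on $\gr_{\mathcal{G}}\omega_R$ the product $\LF_{\mathcal{F}}(f)\cdot \LF_{\mathcal{G}}(\phi)$ is the image of $f\phi = 0$ in the quotient $G_{i-\rho}\omega_R / G_{i-\rho-1}\omega_R$, hence vanishes. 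Transporting this through~$\Psi$ and using the identification of the previous paragraph gives $\LF_{\mathcal{F}}(f)\cdot G_{-\rho}\omega_{\grFR} = \{0\}$. The strict CBP and Theorem~\ref{CanModCB} then force $\LF_{\mathcal{F}}(f) = 0$, contradicting $f \ne 0$.

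The only delicate point is the index bookkeeping needed to confirm that the lowest-order pieces on the filtered and graded sides really correspond under~$\Psi$; once that is in place, the argument reduces to the standard fact that the product of leading forms in the associated graded module is zero whenever the underlying product is zero, which is exactly what our hypothesis $f\cdot G_{-\rho}\omega_R = \{0\}$ supplies for every $\phi \in G_{-\rho}\omega_R$.
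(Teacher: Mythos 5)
Your proposal is correct and follows essentially the same route as the paper's proof: argue by contradiction via Theorem~\ref{CanModCB}, pass to leading forms, use that $\LF_{\mathcal{F}}(f)\cdot\LF_{\mathcal{G}}(\phi)$ vanishes because $f\phi=0$, transport through the isomorphism of Lemma~\ref{IsomOfgr}, and invoke Theorem~\ref{CanModCB} for $\grFR$. The extra bookkeeping you supply (identifying $G_{-\rho}\omega_R$ with the degree $-\rho$ component of the graded module, and matching it with $(\omega_{\grFR})_{-\rho}$ under $\Psi$) is a harmless elaboration of what the paper leaves implicit.
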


\begin{proof}
To prove this implication, we use the characterization
of the CBP given in Theorem~\ref{CanModCB}. For a contradiction,
assume that there exists an element $f\in R\setminus \{0\}$ 
which satisfies $f\cdot G_{-\ri(R)}\omega_R =0$. Let $\phi
\in G_{-\ri(R)}\omega_R\setminus \{0\}$, and let $\LF_\mathcal{G}(\phi)$
be the leading form of~$\phi$, i.e.\ the residue class of~$\phi$
in 
$$
\gr_{\mathcal{G}}(\omega_R)_{-\ri(R)} \;=\; G_{-\ri(R)}\omega_R / 
G_{-\ri(R)-1} \omega_R \;=\; G_{-\ri(R)}\omega_R / \langle 0\rangle
$$
If we have $\LF_{\mathcal{F}}(f)\cdot \LF_{\mathcal{G}}(\phi)\ne 0$,
then this element equals $\LF_{\mathcal{G}}(f\phi)$, in contradiction
to $f\phi=0$. It follows that the leading form $\LF_{\mathcal{F}}(f)$ 
annihilates all elements in $\gr_{\mathcal{G}}(\omega_R)_{-\ri(R)}$.
Using Lemma~\ref{IsomOfgr}, we then get that
$\LF_{\mathcal{F}}(f)$ annihilates all elements in $(\omega_{\grFR})_{-\ri(R)}$.
In view of Theorem~\ref{CanModCB}, this contradicts the~CBP of~$\grFR$.
\end{proof}

The next example shows that the converse of this proposition
does not hold in general.

\begin{example}\label{7nonreduc}
Let $K=\mathbb{Q}$, let $P=K[x,y]$, let
$\M_1 = \langle y-x^2,\; x^3-x-1 \rangle$, let 
$\Q_2 = \langle x^2,\; y^2 \rangle$,
let $I$ be the ideal 
$$
I \;=\; \M_1\cap \Q_2 \;=\; 
\langle  xy^2 -y^3 -x^2 +y^2,\; x^2y -y^2,\;  x^3 -y^3 +y^2,\;  
y^4 -2y^3 -x^2 +y^2\rangle
$$
and let $R=P/I$.
The affine Hilbert function of~$R$ is $(1,3,6,7,7,\dots)$. 
Hence we have $\ri(R)=3$ and $\Delta_R=1$.

First we use Algorithm~\ref{alg:GorCB} to show that~$R$
is a locally Gorenstein ring having the~CBP.
A degree filtered $K$-basis~$B$ of~$R$ is given by the residue
classes of the elements in the tuple
$(1,\;  y,\;  x,\;  y^2,\;  xy,\;  x^2,\;  y^3)$.  
The computation of the matrix~$C_0$ in Step~(3) of Algorithm~\ref{alg:GorCB}
yields $\det(C_0) = z_7^7\ne 0$. This implies the claim. 
More precisely, by Corollary~\ref{deltaqual1} we know that
the element $\phi =(y^3)^*$ is a generator of~$\omega_R$ of order
$\ord_{\mathcal{G}}(\phi) = -\ri(R)$, and up to scalar multiples it is the
only element of that order.

Now we check that~$R$ does not have the strict CBP.
The degree form ideal of~$I$ is $\DF(I)= \langle xy^2 -y^3,\; x^2y,\; 
x^3-y^3,\; y^4 \rangle$. When we use Algorithm~\ref{alg:GorCB}
to check whether~$\grFR$ has the CBP, we find $\det(C_0)=0$.
Hence the ring $\grFR \cong P/\DF(I)$ does not have the CBP
and~$R$ does not have the strict CBP, as claimed.
\end{example}

Using the strict Cayley-Bacharach property, we can 
characterize strict Gorenstein rings in another way.

\begin{theorem}{\bf (Second Characterization of Strict Gorenstein
Rings)}\label{CharSGor2}\\
For a 0-dimensional affine $K$-algebra $R=P/I$, the following
conditions are equivalent.
\begin{enumerate}
\item[(a)] The ring $R$ is a strict Gorenstein ring.

\item[(b)] The ring~$R$ has the strict Cayley-Bacharach property and
satisfies $\Delta_R=1$.
\end{enumerate}
\end{theorem}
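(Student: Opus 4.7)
My plan is to reduce both implications to results already established in the paper by applying them to the 0-dimensional affine $K$-algebra $A:=\grFR=P/\DF(I)$. The key observation is that, since $\DF(I)$ is a homogeneous ideal of $P$, the degree filtration $(F_iA)_{i\in\mathbb{Z}}$ on $A$ coincides with the filtration induced by its natural grading: explicitly, $F_iP\cap\DF(I)=\bigoplus_{j\le i}\DF(I)_j$, so $F_iA=\bigoplus_{j\le i}A_j$, and consequently $\gr_{\mathcal{F}}(A)=A$. By Remark~\ref{hilbandAffhilb} we also have $\HF^a_A=\HF^a_R$, whence $\ri(A)=\ri(R)$ and $\Delta_A=\Delta_R$. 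Finally, by the definition of the strict Cayley-Bacharach property, the ring $R$ has the strict CBP if and only if $A$ has the CBP; and by Definition~\ref{defstrictGor} combined with $\gr_{\mathcal{F}}(A)=A$, the ring $R$ is a strict Gorenstein ring if and only if $A$ is locally Gorenstein, if and only if $A$ itself is a strict Gorenstein ring in the sense of Definition~\ref{defstrictGor}.

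For the implication (b)$\Rightarrow$(a), I will assume that $R$ has the strict CBP and $\Delta_R=1$. Then $A$ has the CBP and $\Delta_A=1$, so Corollary~\ref{deltaqual1} applied to $A$ yields that $A$ is locally Gorenstein, which by the key observation is equivalent to $R$ being a strict Gorenstein ring.

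For the converse (a)$\Rightarrow$(b), I will assume that $R$ is a strict Gorenstein ring. By the key observation, $A$ is itself a strict Gorenstein ring, so Theorem~\ref{CharSGor1} applied to $A$ gives that $A$ has the CBP and that $\HF^a_A$ is symmetric. The first of these is precisely the strict CBP of $R$, and from the symmetry of $\HF^a_R=\HF^a_A$ together with $\HF^a_R(-1)=0$ and $\HF^a_R(0)=1$ we deduce $\Delta_R=\Delta\HF^a_R(\ri(R))=\Delta\HF^a_R(0)=1$. The only step that requires any verification is the key observation $\gr_{\mathcal{F}}(A)=A$, which is, however, immediate from the homogeneity of $\DF(I)$; everything else is just bookkeeping of previously proved equivalences, with Corollary~\ref{deltaqual1} providing the nontrivial input for one direction and Theorem~\ref{CharSGor1} for the other.
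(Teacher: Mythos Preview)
Your proof is correct. Both directions hinge, as in the paper, on passing to $A=\grFR$ and exploiting that $\gr_{\mathcal{F}}(A)=A$; your (b)$\Rightarrow$(a) via Corollary~\ref{deltaqual1} is identical to the paper's argument.

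For (a)$\Rightarrow$(b) there is a mild difference worth noting. The paper argues directly from the standard graded fact that a 0-dimensional graded local Gorenstein ring satisfies $\omega_{\grFR}\cong\grFR(\ri(R))$; this yields a generator of $\omega_{\grFR}$ of order $-\ri(R)$ and simultaneously forces $\Delta_R=\dim_K(\grFR)_{\ri(R)}=1$, after which Corollary~\ref{deltaqual1} gives the CBP of $\grFR$. You instead feed $A$ into Theorem~\ref{CharSGor1} to get the CBP of $A$ together with symmetry of $\HF^a_A$, and then read off $\Delta_R=1$ from the symmetry. Your route is a bit more self-contained relative to the paper (it avoids quoting the isomorphism $\omega_{\grFR}\cong\grFR(\ri(R))$ as an external fact), at the cost of invoking the heavier Theorem~\ref{CharSGor1}, whose proof already uses that isomorphism internally. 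Either way the logical content is the same.
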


\begin{proof} 
To prove that~(a) implies~(b), we note that, since
$\grFR$ is a 0-dimen\-sional graded local Gorenstein ring,
its canonical module is a graded free $\grFR$-module
of rank one. More precisely, we have $\omega_{\grFR} \cong
\grFR(\ri(R))$, and thus the claim follows by applying 
Corollary~\ref{deltaqual1} to~$\grFR$.
The converse implication follows immediately from an application
of Corollary~\ref{deltaqual1} to~$\grFR$.
\end{proof}

Finally, we recall from~\cite{KK}, Proposition~4.2, that the ring~$R$ is said to be
a {\bf strict complete intersection} if $I=\langle f_1,\dots,f_n\rangle$
is generated by a regular sequence $(f_1,\dots,f_n)$ such that also
the degree forms $(\DF(f_1),\dots,\DF(f_n))$ form a regular sequence.
Strict complete intersections are strict Gorenstein rings.
Our last example brings us back to the origins of the history of
the CBP: it showcases a strict complete intersection of two
irreducible plane curves which has the CBP. However, we point
out that this ideal has non-maximal primary components and
one of its maximal components is non-linear. In other words, the corresponding
0-dimensional scheme is not reduced and has non-rational support.

\begin{example}\label{twocubics}
Let $K=\mathbb{Q}$, let $P=K[x,y]$, let~$I$ be the ideal
$I = \langle f,\, g\rangle$ in~$P$, where $f = y^3-x$ and 
$g= x^3 -3x^2y -xy^2 -2x^2 -xy +5y^2 +3x +2y$, and let $R=P/I$.
The primary decomposition of~$I$ is given by
$I = \M_1\cap \M_2\cap \Q_3 \cap \M_4$, where
$\M_1 = \langle x,\; y\rangle$,
$\M_2 =\langle x-8,\; y-2 \rangle$,
$\Q_3 =\langle x -3y-2,\; (y+1)^2\rangle$,
and $\M_4 =\langle x-y^3,\; y^5-y-1\rangle$.
The affine Hilbert function of~$R$ is $(1,3,6,8,9,9,\dots)$.
Hence we have $\ri(R)=4$ and $\Delta_R=1$.
Moreover, the affine Hilbert function of~$R$ is clearly symmetric.

Let us use Algorithm~\ref{alg:GorCB} to check that~$R$
is locally Gorenstein and has the~CBP.
A degree filtered $K$-basis~$B$ of~$R$ is given by the residue
classes of the elements in the tuple
$(1,\;  y,\;  x,\;  y^2,\;  xy,\;  x^2,\;  xy^2,\;  x^2y,\;  x^2y^2)$. 
The computation of the matrix~$C_0$ in Step~(3) yields
$\det(C_0) = z_9^9\ne 0$, and therefore the claim.
More precisely, the element $\phi =(x^2y^2)^*$ is a generator of~$\omega_R$ 
such that $\ord_{\mathcal{G}}(\phi) = -\ri(R)$. 

Altogether, Theorem~\ref{CharSGor1} shows that~$R$ is a strict
Gorenstein ring. In fact, since $(\DF(f),\, \DF(g))=(y^3,\,
x^3-3x^2y-xy^2)$ is a regular sequence, the ring~$R$ is a
strict complete intersection.
\end{example}

\subsection*{Acknowledgements} The third author thanks the University of
Passau for its hospitality and support during part of the preparation
of this paper.

\bigbreak


\begin{thebibliography}{99}

\bibitem{CoCoA} J.~Abbott, A.M.~Bigatti, and L.~ Robbiano, 
\cocoa : a system for doing Computations in Commutative Algebra, 
available at {\tt http://cocoa.dima.unige.it}.


\bibitem{Bac} I.~ Bacharach, Ueber den Cayley'schen 
Schnittpunktsatz, Math.\ Ann.\ {\bf 26} (1886), 275-299.

\bibitem{Cay1} A.\ Cayley, On the intersection of curves,
Cambridge Math.\ J.\ {\bf 3} (1843), 211-213.

\bibitem{Cay2} A.\ Cayley, On the intersection of curves,
Math.\ Ann.\ {\bf 30} (1887), 85-90.

\bibitem{Cha} M.\ Chasles, {\it Apercu historique sur
l'origine et le d\'eveloppement des m\'ethodes en 
g\'eometri\'e}, M. Hayez, Brussels, 1837.


\bibitem{DGO} E.~Davis, A.V.~Geramita, and F.~Orecchia, 
Gorenstein algebras and the Cayley-Bacharach theorem,
Proc. Amer. Math. Soc. {\bf 93} (1985), 593-597.

\bibitem{EGH} D.\ Eisenbud, M.\ Green, and J.\ Harris,
Cayley-Bacharach theorems and conjectures,
Bull.\ Amer.\ Math.\ Soc.\ {\bf 33} (1996), 295-324.

\bibitem{Eul} L.\ Euler, Sur une contradiction apparente
dans la doctrine des lignes courbes, Memoires de l'academie 
des sciences de Berlin {\bf 4} (1748), 219-233.

\bibitem{GKR} A.V. Geramita, M. Kreuzer, and L. Robbiano,
Cayley-Bacharach schemes and their canonical modules,
Trans.\ Amer.\ Math.\ Soc.\ {\bf 339} (1993), 163-189.

\bibitem{Gor} D.~Gorenstein, An arithmetic theory of adjoint plane
curves, Trans. Amer. Math. Soc. {\bf 72} (1952), 414-436.

\bibitem{Jac1} C.G.~Jacobi, Theoremata nova algebraica circa
systema duarum aequationum inter duas variabiles propositarum,
J.\ reine angew.\ Math.\ {\bf 14} (1835), 281-288.

\bibitem{Jac2} C.G.~Jacobi, De relationibus, quae locum habere 
debent inter pucta intersectionis duarum curvarum vel trium
superficierum algebraicarum dati ordinis, simul cum enodatione
paradoxi algbraici, J.\ reine angew.\ Math.\ {\bf 15} (1836), 205-308.

\bibitem{Kre1} M. Kreuzer, On 0-dimensional complete intersections,
Math.\ Ann.\ {\bf 292} (1992), 43-58.

\bibitem{Kre2} M. Kreuzer,
On the canonical module of a 0-dimensional scheme,
Can.\ J.\ Math. {\bf 141} (1994), 357-379.

\bibitem{KK} M.\ Kreuzer and E.\ Kunz, Traces in strict Frobenius
algebras and strict complete intersections, J.\ reine angew.\ Math.\ 
{\bf 381} (1987), 181-204.

\bibitem{KL} M.\ Kreuzer and L.N.~Long,
Characterizations of zero-dimensional complete intersections,
Beitr\"{a}ge Algebra Geom.\ {\bf 58} (2017), 93--129.

\bibitem{KLR} M.\ Kreuzer, L.N.\ Long, and L.\ Robbiano,
Subschemes of border basis schemes, in preparation.

\bibitem{KR1} M.\ Kreuzer and L.\ Robbiano, {\it Computational
Commutative  Algebra 1}, Springer Verlag, Heidelberg, 2000 (second printing 2008).

\bibitem{KR2} M.\ Kreuzer and L.\ Robbiano, {\it Computational
Commutative Algebra 2}, Springer Verlag, Heidelberg, 2005.

\bibitem{KR3} M.\ Kreuzer and L.\ Robbiano, {\it Computational  Linear
and Commutative Algebra}, Springer Int.\ Publ., Cham, 2016.

\bibitem{Lon} L.N.~Long,
Various differents for 0-dimensional schemes and applications,
dissertation, University of Passau, Passau, 2015.

\bibitem{NO} C.\ Nastasescu and F.\ van Oystaeyen, {\it Graded
and Filtered Rings and Modules}, Lecture Notes in Math.\ 
{\bf 758}, Springer Verlag, Heidelberg, 1979. 

\bibitem{Pap} Pappos of Alexandria, Mathematicae Collectiones, published by
F. de F.~Senense, Venice, 1588.

\bibitem{Pas} B.~Pascal, Essay pour les coniques, Paris, 1640.

\end{thebibliography}
\end{document}